\documentclass[12pt,a4paper,reqno]{amsart} 
\pagestyle{plain}
\usepackage{amssymb}
\usepackage{latexsym}
\usepackage{amsmath}
\usepackage{mathrsfs}
\usepackage{euscript}
\usepackage[X2,T1]{fontenc}
\usepackage{cite}
\usepackage{calc}                   




\newcommand{\scal}[2]{\langle #1,#2\rangle}
\newcommand{\rr}[1]{\mathbf R^{#1}}
\newcommand{\rrstar}[1]{\mathbf R_*^{#1}}
\newcommand{\zz}[1]{\mathbf Z^{#1}}

\newcommand{\nm}[2]{\Vert #1\Vert _{#2}}

\newcommand{\op}{\operatorname{Op}}

\newcommand{\SG}{\operatorname{SG}}

\newcommand{\sets}[2]{\{ \, #1\, ;\, #2\, \} }
\newcommand{\ep}{\varepsilon}
\newcommand{\fy}{\varphi}
\newcommand{\cdo}{\, \cdot \, }
\newcommand{\supp}{\operatorname{supp}}

\newcommand{\eabs}[1]{\langle #1\rangle}
\newcommand{\ON}{\operatorname{ON}}

\newcommand{\vrum}{\vspace{0.1cm}}



\newcommand{\maclB}{\mathcal B}

\newcommand{\maclS}{\mathcal S}

\newcommand{\mascE}{\mathscr E}
\newcommand{\mascF}{\mathscr F}
\newcommand{\mascH}{\mathscr H}
\newcommand{\mascI}{\mathscr I}
\newcommand{\mascP}{\mathscr P}
\newcommand{\mascS}{\mathscr S}

\newcommand{\mabfj}{\boldsymbol j}
\newcommand{\mabfk}{\boldsymbol k}
\newcommand{\mabfp}{{\boldsymbol p}}
\newcommand{\mabfq}{\boldsymbol q}

\newcommand{\splM}{\EuScript M}
\newcommand{\splW}{\EuScript W}

\setcounter{section}{\value{section}-1}   

\numberwithin{equation}{section}          

\newtheorem{thm}{Theorem}
\numberwithin{thm}{section}

\newcommand{\rubrik}{}
\newtheorem{prop}[thm]{Proposition}
\newtheorem{cor}[thm]{Corollary}
\newtheorem{lemma}[thm]{Lemma}

\theoremstyle{definition}

\newtheorem{defn}[thm]{Definition}

\theoremstyle{remark}
\newtheorem{rem}[thm]{Remark}              

\title{Continuity and compactness for
pseudo-differential operators with symbols in
quasi-Banach spaces or H{\"o}rmander classes}


\author{Joachim Toft}

\address{Department of Mathematics,
Linn{\ae}us University, V{\"a}xj{\"o}, Sweden}

\email{joachim.toft@lnu.se}



\frenchspacing

\keywords{Schatten-von Neumann, quasi-Banach,
modulation spaces, H{\"o}rmander classes, matrices}

\subjclass{Primary: 35S05, 42B35, 46A16, 46F10, 47L10, 46E35
\quad Secondary: 46L10, 47B37, 47H07}

\begin{document}

\par

\begin{abstract}
We deduce continuity and Schatten-von Neumann properties for
operators
with matrices satisfying mixed quasi-norm estimates with Lebesgue
and Schatten parameters in $(0,\infty ]$. We use these results to deduce
continuity and Schatten-von Neumann properties for pseudo-differential
operators with symbols in quasi-Banach modulation
spaces, or in appropriate H{\"o}rmander classes.
\end{abstract}

\maketitle


\par

\section{Introduction}\label{sec0}

\par

The singular values for a linear operator is a non-increasing sequence of
non-negative numbers which are strongly
linked to questions on continuity and compactness for the operator
in the following sense:
\begin{itemize}
\item the operator is continuous, if and only if its singular values are bounded.

\vrum

\item the operator is compact, if and only if its singular values decay towards
zero at infinity. 
Moreover, fast decays of the singular values permit efficient finite
rank approximations.
%

\vrum

\item the operator has rank $j\ge 0$, if and only if its singular values of
order $j+1$ and higher are zero.
\end{itemize}
(See \cite{Ho1,Gc2} and Section \ref{sec1} for definitions.)

\par

%

In particular, there is a strong connection between the decay of the
singular values and finding pseudo-inverses in convenient ways, since
such questions are linked to efficient finite rank approximations.

\par

One way to measure the decay of singular values is to consider
Schatten-von Neumann classes. More precisely, let $T$ be a linear
operator. Then
$T$ belongs to $\mascI _p$, the set of Schatten-von Neumann
operators of order $p\in (0,\infty ]$, if and only if its singular values
$\sigma _1(T),\sigma _2(T),\dots$  belong to $\ell ^p$. Since the
singular values are non-negative and non-increasing, it follows that
\begin{equation}\label{SiVaSchattProp}
\begin{alignedat}{3}
\sigma _j(T) &= o (j^{-1/p}),&
\quad &\text{when} &\quad
T&\in \mascI _p,\ p<\infty ,
\\[1ex]
\sigma _j(T) &\neq o (j^{-1/p}),&
\quad &\text{when} &\quad
T&\notin \mascI _{p+\ep},\ p<\infty ,\ \ep >0,
\end{alignedat}
\end{equation}
which indicates the link
between Schatten-von Neumann classes and the decays of
singular values.

\par

It is in general a difficult task to find exact and convenient characterizations of
Schatten-von Neumann classes. One is therefore left to find suitable
necessary or sufficient conditions when characterizing such classes. For
example a Toeplitz operator, acting on $L^2$ belongs to $\mascI _p$,
$p\in [1,\infty]$, when its symbol belongs to $L^p$ (cf. \cite{HeWo,BaCoIs,BaIs}).
For pseudo-differential operators $\op (a)$, the situation is slightly different
since $\op (a)$ might not be in $\mascI _p$, $p\neq 2$,
when its symbol $a$ belongs to $L^p$. On the other hand, by adding
further restrictions on the symbols it is possible to deduce similar
sufficient conditions as for Toeplitz operators. For example,
if $S(m,g)$ is an appropriate H{\"o}rmander class parameterized
with the Riemannian metric $g$ and weight function $m$ on the
phase space, then
\begin{equation}\label{HormSchattenEquiv}
\sets {\op _t(a)}{a\in S(m,g)}\subseteq \mascI _p
\qquad \Longleftrightarrow \qquad
m\in L^p.
\end{equation}
(Cf. Theorems 2.1 and 2.9 in \cite{BuTo}. See also \cite{Ho0,Ho1,Toft4}
for pre-results.)

\par

There are several Schatten-von Neumann results for pseudo-differential
operators with symbols in modulation spaces, Besov spaces and Sobolev
spaces (cf. \cite{Toft5} and the references therein). In particular,
let $M^{p,q}$ be the classical modulation space with parameters
$p,q\in [1,\infty ]$, introduced by Feichtinger in \cite{Fei1}. Then
\begin{gather}
\op (a)  \in \mascI _p 
\quad  \text{when} \quad
a\in M^{p,p}, \ p\in [1,2],\label{ModSchatten1}
\\[1ex]
\op (a) \, :\, M^{p_1,q_1}\to M^{p_1,q_1} 
\quad  \text{when} \quad
a\in M^{\infty ,1},\ p_1,q_1\in [1,\infty ],
\label{ModCont1}
\intertext{and}
\op (a) \, :\, M^{\infty ,\infty }\to M^{1,1} 
\quad  \text{when} \quad
a\in M^{1 ,1}.
\label{ModCont1B}
\end{gather}
The relation
\eqref{ModSchatten1} was essentially deduced by
Gr{\"o}chenig and Heil, although it seems to be well-known
earlier by Feichtinger (cf. \cite[Proposition 4.1]{GH1}). The
relation \eqref{ModCont1} was first proved in \cite{Gc2},
with certain pre-results given already in \cite{GH1,Sj1}, and
\eqref{ModCont1B} is in some sense obtained by Feichtinger
already in \cite{Fei1}.

\par

There are also several extensions and modifications of these
results. For example, in \cite{GH2,Toft2} it was proved that
\begin{equation}\label{ModCont1C}
\begin{gathered}
\op (a)\, :\, M^{p_1,q_1}\to M^{p_2,q_2}
\quad \text{when} \quad
a\in M^{p,q}, \ q\le \min (p,p')
\\[1ex]
\quad \text{and} \quad
\frac 1{p_1} - \frac 1{p_2} = \frac 1{q_1} - \frac 1{q_2} = 1-\frac 1{p} - \frac 1{q},
\ q\le p_2,q_2\le p,
\end{gathered}
\end{equation}
which covers both \eqref{ModCont1} and \eqref{ModCont1B}. See also
\cite{Toft3,Toft5,Toft8,Toft11} for extensions of the latter result to weighted spaces,
and \cite{MoPf,Wa} for related results with other types of modulation spaces
as symbol classes. Furthermore, in \cite{DeRu1,DeRu2,DeRu3,FiRu},
related analysis in background of compact or local-compact Lie groups can
be found. 

\par

In the literature, it is usually assumed that $p$ and $q$ here above belong
to $[1,\infty ]$ instead of the larger interval $(0,\infty ]$. An important reason
for excluding the cases $p<1$ or $q<1$ is that the involved spaces fail
to be local convex, leading in general to several additional difficulties
compared to the situation when $p,q\in [1,\infty ]$.
On
the other hand, in view of \eqref{SiVaSchattProp} it is valuable to decide
whether an operator belongs to $\mascI _p$ or not, also in the case $p<1$.
Here we remark that convenient Schatten-$p$ results with $p<1$ can be found
for Hankel and Toeplitz operators in e.{\,}g. \cite{Is}, and for pseudo-differential operators
on compact Lie groups in e.{\,}g. \cite{DeRu1,DeRu2,DeRu3}.

\medspace

In the paper we deduce weighted extensions of
\eqref{HormSchattenEquiv}--\eqref{ModCont1B}, where in contrast to
\cite{Toft3,Toft5,Toft8,Toft11}, the case $p<1$
is included. First we deduce continuity and Schatten-von
Neumann properties for suitable types of matrix operators. Thereafter we
carry over these results to the case of pseudo-differential operators
with symbols in modulation spaces, using Gabor analysis as link, in
analogous ways as in e.{\,}g. \cite{GH1,Gc3,Gc4,GrSt,Toft5,WaSc}.

\par

Here we remark that our analysis is comprehensive
compared to \cite{GH1,Gc3,Gc4,GrSt,Toft5,WaSc} 
because of the absent of local-convexity.
The situation is handled by using the Gabor analysis in \cite{GaSa,Toft12},
for non-local convex modulation spaces, in combination of suitable
factorization techniques for matrix operators.

\par

In order to shed some more light we explain
some consequences of our investigations. As a special case of
Theorem \ref{thmOpSchatten} we have 
\begin{equation}\tag*{(\ref{ModSchatten1})$'$}
\op (a) \in \mascI _p
\quad \text{when} \quad
a\in M^{p,p}, \ p\in (0,2],
\end{equation}
i.{\,}e. \eqref{ModSchatten1} still holds after
$[1,2]$ is replaced by the larger interval $(0,2]$. Furthermore,
we prove that \eqref{ModSchatten1}$'$ is sharp in
the sense that any
modulation space (with trivial weight), and not contained in
$M^{p,p}(\rr {2d})$, contains symbols, whose corresponding
pseudo-differential operators fail to belong to $\mascI _p$ (cf.
Theorem \ref{SchattenConverse}).

\par

In Section \ref{sec3} we also deduce general continuity results for
pseudo-differential operators with symbols in modulation spaces. 
In particular, \eqref{ModCont1C} is extended in Theorem
\ref{thmOpCont} extend in several ways, and as special case,
\eqref{ModCont1} and \eqref{ModCont1B} are extended into
\begin{multline}\tag*{(\ref{ModCont1})$'$}
\op (a) \, :\, M^{p_1,q_1}\to M^{p_1,q_1}
\\[1ex]
\text{when} \quad
a\in M^{\infty ,q},\ p_1,q_1\in [q,\infty ],\ q\in (0,1],
\end{multline}
and
\begin{equation}\tag*{(\ref{ModCont1B})$'$}
\op (a) \, :\, M^{\infty ,\infty }\to M^{q,q} 
\quad  \text{when} \quad
a\in M^{q ,q},\ q\in (0,1].
\end{equation}

\par

In Section \ref{sec4} we apply \eqref{ModSchatten1}$'$
to deduce Schatten-von Neumann
properties for pseudo-differential operators with symbols in $S(m,g)$ in
H{\"o}rmander-Weyl calculus. In particular we show that the sufficiency
part of \eqref{HormSchattenEquiv} still holds for $p\in (0,1]$
(cf. Theorem \ref{thm:WeylHorm1}). That is, for suitable $m$ and $g$, we have
\begin{equation*}
\sets {\op _t(a)}{a\in S(m,g)}\subseteq \mascI _p
\quad \text{when} \quad
m\in L^p.
\end{equation*}

\medspace

An important part behind the analysis
concerns Theorem \ref{factorizationprop}, which in the
non-weighted case,
essentially state that any matrix
$A\in \mathbb U^{p_0}$ can be factorized as
\begin{equation}\label{MatrixFaktor0}
A=A_1\cdot A_2,
\quad \text{when}\quad
A_j\in \mathbb U^{p_j},\quad
\frac 1{p_1}+\frac 1{p_2}= \frac 1{p_0}.
\end{equation}
From these relations we obtain
\begin{equation}\label{embSimpleCase}
\mathbb U^{p}\subseteq \mascI _{p},\quad
\text{when}\quad p\in (0,2].
\end{equation}

\par

In fact, the set of Hilbert-Schmidt operators on $\ell ^2$
agrees with $\mathbb U^2$, and with
$\mascI _2$ (also in norms).  Consequently, $\mathbb U^{2}
=\mascI _2$, and H{\"o}lder's inequality for
Schatten-von Neumann classes together with \eqref{MatrixFaktor0}
give that for every $A\in \mathbb U^{2/N}$,
with integer $N\ge 1$, there are matrices
$A_1,\dots ,A_N\in \mathbb U^{2}$ such that
\begin{equation*}
A = A_1\cdots A_N \in \mathbb U^{2}\circ \cdots \circ \mathbb U^{2}
= \mascI _2\circ \cdots \circ \mascI _2 = \mascI _{2/N}.
\end{equation*}
Hence $\mathbb U^{2/N}\subseteq \mascI _{2/N}$ for every
integer $N\ge 1$. 
A (real) interpolation argument between the cases
$$
\mathbb U^{2/N}\subseteq \mascI _{2/N}\quad
\text{and}\quad
\mathbb U^{2} =  \mascI _{2}
$$
now shows that that $\mathbb U^{p}\subseteq \mascI _{p}$
when $p\in [2/N,2]$. Since $2/N$ can be chosen arbitrarily
close to $0$, \eqref{embSimpleCase} follows.

\par

In Section \ref{sec2}, the previous arguments are used to deduce
more general versions of \eqref{embSimpleCase} involving weighted spaces.
(See Theorem \ref{MatrixSchatten}.)

\par

In Section \ref{sec5} we show some applications and other results for Schatten-von
Neumann symbols. Here we introduce the set $s_{t,p}^q$ consisting of all
symbols $a$ such that $\op _t(a)$ belongs to $\mascI _p$ and such that
the orthonormal sequences of the eigenfunctions to $|\op _t(a)|$ and $|\op _t(a)^*|$
are bounded sets in the modulation space $M^{2q}$. It follows that $s_{t,p}^q$
is contained in $s_{t,p}$ the set of all symbols $a$ such that $\op _t(a)\in \mascI _p$. 

\par

We prove that $\mascS$ is continuously embedded in $s_{t,p}^q$, and that
$$
s_{t,p}^p \cap \mascE ' \subseteq \mascF L^p\cap \mascE ' \subseteq s_{t,p} \cap \mascE ' 
$$
for every $p>0$.

\par

Finally we remark
that in \cite{DeRu1,DeRu2,DeRu3,FiRu},
Delgado, Fischer, Ruzhansky and Turunen deal with various kinds
of continuity and compactness questions for pseudo-differential
operators acting on functions defined on suitable Lie groups.
In their approach, matrix-valued
symbols appear naturally, and several interesting results on
matrices are deduced. A part of these investigations are related to
the analysis in Section \ref{sec2}.

\par

\section*{Acknowledgement}

\par

The author is very grateful to K. H. Gr{\"o}chenig, for valuable
advices and comments, leading
to several improvements of the content and the style. He is also
grateful to P. Wahlberg for valuable comments.

\par

\section{Preliminaries}\label{sec1}

\par

In this section we recall some facts on Gelfand-Shilov spaces,
modulation spaces and Schatten-von Neumann classes.
The proofs are in general omitted.

\par

\subsection{Weight functions}

\par

We start by discussing general properties on weight
functions. A \emph{weight} on $\rr d$ is a positive function $\omega
\in  L^\infty _{loc}(\rr d)$ such that $1/\omega \in  L^\infty _{loc}(\rr d)$.
We usually assume that $\omega$ is \emph{moderate},
or \emph{$v$-moderate} for some positive function $v \in
 L^\infty _{loc}(\rr d)$. This means that
\begin{equation}\label{moderate}
\omega (x+y) \lesssim \omega (x)v(y),\qquad x,y\in \rr d.
\end{equation}
Here $A\lesssim B$ means that $A\le cB$
for a suitable constant $c>0$, and for future references, we
write $A\asymp B$
when $A\lesssim B$ and $B\lesssim A$. 
We note that \eqref{moderate} implies that $\omega$ fulfills
the estimates
\begin{equation}\label{moderateconseq}
v(-x)^{-1}\lesssim \omega (x)\lesssim v(x),\quad x\in \rr d.
\end{equation}
Furthermore, if $v$ in \eqref{moderate} can be chosen as a polynomial,
then $\omega$ is called a weight of \emph{polynomial type}. We let
$\mascP (\rr d)$ and $\mascP _E(\rr d)$ be the sets of all weights of
polynomial type and moderate weights on $\rr d$, respectively.

\par

It can be proved that if $\omega \in \mascP _E(\rr d)$, then
$\omega$ is $v$-moderate for some $v(x) = e^{r|x|}$, provided the
positive constant $r>0$ is chosen large enough (cf. \cite{Gc2.5}). In particular,
\eqref{moderateconseq} shows that for any $\omega \in \mascP
_E(\rr d)$, there is a constant $r>0$ such that
\begin{equation}\label{WeightExpEst}
e^{-r|x|}\lesssim \omega (x)\lesssim e^{r|x|},\quad x\in \rr d
\end{equation}
(cf. \cite{Gc2.5}).

\par

We say that $v$ is
\emph{submultiplicative} if $v$ is even and \eqref{moderate}
holds with $\omega =v$. In the sequel, $v$ and $v_j$ for
$j\ge 0$, always stand for submultiplicative weights if
nothing else is stated.

\par

\subsection{Gelfand-Shilov spaces}

\par

Next we recall the definition of Gelfand-Shilov spaces.

\par

Let $h,s\in \mathbf R_+$ be fixed. Then $\mathcal S_{s,h}(\rr d)$
is the set of all $f\in C^\infty (\rr d)$ such that
\begin{equation*}
\nm f{\mathcal S_{s,h}}\equiv \sup \frac {|x^\beta \partial ^\alpha
f(x)|}{h^{|\alpha + \beta |}(\alpha !\, \beta !)^s}
\end{equation*}
is finite. Here the supremum is taken over all $\alpha ,\beta \in
\mathbf N^d$ and $x\in \rr d$.

\par

Obviously $\mathcal S_{s,h}(\rr d)$ is a Banach space which increases as $h$
and $s$ increase,
and is contained in $\mascS (\rr d)$, the set of Schwartz functions on $\rr d$.
Furthermore, if $s>1/2$, or $s=1/2$ and $h$ is sufficiently large, then
is dense in $\mathscr S$. Hence, the dual $(\mathcal S_{s,h})'(\rr d)$ of
$\mathcal S_{s,h}(\rr d)$ is a Banach space which contains $\mathscr S'(\rr d)$.

\par

The \emph{Gelfand-Shilov spaces} $\mathcal S_{s}(\rr d)$ and
$\Sigma _s(\rr d)$ are the inductive and projective limits respectively
of $\mathcal S_{s,h}(\rr d)$ with respect to $h$. This implies that
\begin{equation}\label{GSspacecond1}
\mathcal S_s(\rr d) = \bigcup _{h>0}\mathcal S_{s,h}(\rr d)
\quad \text{and}\quad \Sigma _{s}(\rr d) =\bigcap _{h>0}\mathcal
S_{s,h}(\rr d),
\end{equation}
and that the topology for $\mathcal S_s(\rr d)$ is the
strongest possible one such that each inclusion map
from $\mathcal S_{s,h}(\rr d)$ to $\mathcal S_s(\rr d)$
is continuous. The space $\Sigma _s(\rr d)$ is a Fr{\'e}chet
space with semi norms $\nm \cdo{\mathcal S_{s,h}}$, $h>0$.
Moreover, $\mathcal S _s(\rr d)\neq \{ 0\}$, if and only if
$s\ge 1/2$, and $\Sigma _s(\rr d)\neq \{ 0\}$,
if and only if $s>1/2$.

\par

For every $\ep >0$ and $s>0$, we have
$$
\Sigma _s (\rr d)\subseteq \mathcal S_s(\rr d)\subseteq
\Sigma _{s+\ep}(\rr d).
$$

\par

%

\medspace

The \emph{Gelfand-Shilov distribution spaces} $\mathcal S_s'(\rr d)$
and $\Sigma _s'(\rr d)$ are the projective and inductive limit
respectively of $\mathcal S_{s,h}'(\rr d)$.  Hence
\begin{equation}\tag*{(\ref{GSspacecond1})$'$}
\mathcal S_s'(\rr d) = \bigcap _{h>0}\mathcal
S_{s,h}'(\rr d)\quad \text{and}\quad \Sigma _s'(\rr d)
=\bigcup _{h>0} \mathcal S_{s,h}'(\rr d).
\end{equation}
By \cite{Pil}, $\mathcal S_s'$ and $\Sigma _s'$
are the duals of $\mathcal S_s$ and $\Sigma _s$, 
respectively.

\par

The Gelfand-Shilov spaces are invariant or posses convenient
mapping properties under several basic
transformations. For example they are invariant under
translations, dilations, and under (partial) Fourier transformations.

\par

From now on we let $\mathscr F$ be the Fourier transform,
given by
$$
(\mathscr Ff)(\xi )= \widehat f(\xi ) \equiv (2\pi )^{-d/2}\int _{\rr
{d}} f(x)e^{-i\scal  x\xi }\, dx
$$
when $f\in L^1(\rr d)$. Here $\scal \cdo \cdo$ denotes the
usual scalar product on $\rr d$. The map $\mathscr F$ extends 
uniquely to homeomorphisms on $\mathscr S'(\rr d)$, $\mathcal
S_s'(\rr d)$ and $\Sigma _s'(\rr d)$, and restricts to 
homeomorphisms on $\mathscr S(\rr d)$, $\mathcal S_s(\rr d)$
and $\Sigma _s(\rr d)$, and to a unitary operator on $L^2(\rr d)$.

\medspace

Next we recall some mapping properties of Gelfand-Shilov
spaces under short-time Fourier transforms.
Let $\phi \in \mathscr S(\rr d)$ be fixed. For every $f\in
\mathscr S'(\rr d)$, the \emph{short-time Fourier transform} $V_\phi
f$ is the distribution on $\rr {2d}$ defined by the formula
\begin{equation}\label{defstft}
(V_\phi f)(x,\xi ) =\mathscr F(f\, \overline{\phi (\cdo -x)})(\xi ) =
(f,\phi (\cdo -x)e^{i\scal \cdo \xi}).
\end{equation}
We recall that if $T(f,\phi )\equiv V_\phi f$ when $f,\phi \in \maclS _{1/2}(\rr d)$,
then $T$ is uniquely extendable to sequentially continuous mappings
\begin{alignat*}{2}
T\, &:\, & \maclS _s'(\rr d)\times \maclS _s(\rr d) &\to
\maclS _s '(\rr {2d})\bigcap C^\infty (\rr {2d}),
\\[1ex]
T\, &:\, & \maclS _s'(\rr d)\times \maclS _s'(\rr d) &\to
\maclS _s '(\rr {2d}),
\end{alignat*}
and similarly when $\maclS _s$ and $\maclS _s'$ are replaced
by $\Sigma _s$ and $\Sigma _s'$, respectively, or by
$\mascS$ and $\mascS '$, respectively (cf. \cite{CPRT10,Toft8}).
We also note that $V_\phi f$ takes the form
\begin{equation}\tag*{(\ref{defstft})$'$}
V_\phi f(x,\xi ) =(2\pi )^{-d/2}\int _{\rr d}f(y)\overline {\phi
(y-x)}e^{-i\scal y\xi}\, dy
\end{equation}
when $f\in L^p_{(\omega )}(\rr d)$ for some $\omega \in
\mascP _E(\rr d)$, $\phi \in \Sigma _1(\rr d)$ and $p\ge 1$. Here
$L^p_{(\omega )}(\rr d)$, when $p\in (0,\infty ]$ and
$\omega \in \mascP _E(\rr d)$, is the set of all $f\in L^p_{loc} (\rr d)$ such
that $f\cdot \omega \in L^p(\rr d)$. 

\par

\subsection{Mixed quasi-normed space of Lebesgue types}

\par

Let $p,q\in (0,\infty ]$, and let $\omega \in \mascP _E(\rr {2d})$.
Then $L^{p,q}_{(\omega )}(\rr {2d})$ and $L^{p,q}_{*,(\omega )}(\rr {2d})$ consist
of all measurable functions $F$ on $\rr {2d}$ such that
\begin{alignat*}{3}
\nm {g_1}{L^q(\rr d)}&<\infty ,&
\quad &\text{where} &\quad
g_1(\xi ) &\equiv \nm {F(\cdo ,\xi )\omega (\cdo ,\xi )}{L^p(\rr d)}
\intertext{and}
\nm {g_2}{L^p(\rr d)}&<\infty ,&
\quad &\text{where} &\quad
g_2(x) &\equiv \nm {F(x,\cdo )\omega (x,\cdo )}{L^q(\rr d)},
\end{alignat*}
respectively.

\par

More generally, let
$$
\mabfp =(p_1,\dots , p_d)\in (0,\infty ]^d,
\quad 
\mabfq =(q_1,\dots , q_d)\in (0,\infty ]^d,
$$
$\operatorname {S}_d$ be the set of permutations
on $\{ 1,\dots ,d\}$, $\mabfp \in
(0,\infty ]^d$, $\omega \in \mascP _E(\rr d)$, and let $\sigma
\in \operatorname {S}_d$. Moreover, let $\Omega _j\subseteq
\mathbf R$ be Borel-sets, $\mu _j$ be positive Borel
measures on $\Omega _j$, $j=1,\dots ,d$, and let
$\Omega =\Omega _1\times \cdots \times \Omega _d$
and $\mu = \mu _1\otimes \cdots \otimes \mu _d$.
For every measurable and complex-valued function $f$ on
$\Omega$, let
$g_{j,\omega ,\mu}$, $j=1,\dots ,d-1$, be defined inductively by
\begin{align*}
g_{0,\omega ,\mu}(x_1,\dots ,x_d)
&\equiv |f (x_{\sigma ^{-1}(1)},\dots ,x_{\sigma ^{-1}(d)})
\omega (x_{\sigma ^{-1}(1)},\dots ,x_{\sigma ^{-1}(d)})|,
\\[1ex]
g_{k,\omega ,\mu}(x_{k+1},\dots ,x_d) &\equiv \nm {g_{k-1,\omega ,\mu}(\cdo ,
x_{k+1},\dots ,x_d) }
{L^{p_k}(\mu _k)},
\quad k=1,\dots ,d-1 ,
\intertext{and let }
\nm f{L^{\mabfp}_{\sigma ,(\omega )}(\mu)} &\equiv
\nm {g_{d-1,\omega ,\mu}}{L^{p_d}(\mu _d)}.
\end{align*}
The mixed quasi-norm space $L^{\mabfp}_{\sigma ,(\omega )}(\mu)$ of
Lebesgue type is defined as the set of all $\mu$-measurable functions
$f$ such that $\nm f{L^{\mabfp }_{\sigma ,(\omega )}(\mu )}<\infty$.

\par

In the sequel we have $\Omega =\rr d$ and $d\mu = dx$, or
$\Omega =\Lambda$ and $\mu (j)=1$ when $j \in \Lambda$, where
\begin{equation}\label{LambdaDef}
\begin{aligned}
\Lambda &= \Lambda _{[\theta ]} = T_\theta \zz d \equiv
\sets {(\theta _1j_1,\dots ,\theta _dj_d)}{(j_1,\dots ,j_d)\in \zz d} ,
\\[1ex]
\theta &=(\theta _1,\dots ,\theta _d)\in \rrstar d,
\end{aligned}
\end{equation}
and $T_\theta$ denotes the diagonal matrix with diagonal elements
$\theta _1,\dots ,\theta _d$. In the former case we set
$L^{\mabfp}_{\sigma ,(\omega )}(\mu)=L^{\mabfp}_{\sigma ,(\omega )}=
L^{\mabfp}_{\sigma ,(\omega )}(\rr d)$, and in the latter
case we set $L^{\mabfp}_{\sigma ,(\omega )}(\mu) =
\ell ^{\mabfp}_{\sigma ,(\omega )}(\Lambda )$.

\par

For conveniency we also set $L^{\mabfp }_{(\omega )}=L^{\mabfp }_{\sigma ,(\omega )}$
and $\ell ^{\mabfp } _{(\omega )}=\ell ^{\mabfp } _{\sigma ,(\omega )}$ when $\sigma$
is the identity map, and we let $\ell (\Lambda )$ be the set of all
(complex-valued) sequences on $\Lambda$ and $\ell _0
(\Lambda )$ be the set of all $f\in \ell (\Lambda )$
such that $f(j)\neq 0$ for at most finite numbers of $j$.
Furthermore, if $\omega$ is equal to $1$, then we set
\begin{equation*}
L^{\mabfp }_{\sigma } =L^{\mabfp }_{\sigma ,(\omega )},
\quad
\ell ^{\mabfp } _{\sigma }=\ell ^{\mabfp } _{\sigma ,(\omega )},
\quad
L^{\mabfp } = L^{\mabfp }_{(\omega )}
\quad \text{and}  \quad
\ell ^{\mabfp } = \ell ^{\mabfp } _{(\omega )}.
\end{equation*}

\par

Let $\mabfp = (p_1,\dots ,p_d)\in [0,\infty ]^d$,
$\mabfq=(q_1,\dots ,q_d)\in [0 ,\infty ]^d$ and $t\in [-\infty ,\infty ]$. Then we use
the conventions
$$
\mabfp \le \mabfq \quad \text{and}\quad \mabfp \le t \quad \text{when}\quad
p_j\le q_j\ \text{and}\ p_j\le t,
$$
respectively, for every $j=1,\dots ,d$, and
$$
\mabfp = \mabfq \quad \text{and}\quad \mabfp = t \quad \text{when}\quad
p_j= q_j\ \text{and}\ p_j= t,
$$
respectively, for every $j=1,\dots ,d$. The relations
$\mabfp < \mabfq$ and $\mabfp < t$ are defined analogously. We also let
$$
\mabfp \pm \mabfq=(p_1\pm q_1,\dots ,p_d\pm q_d)
\quad \text{and}\quad
\mabfp \pm t=(p_1\pm t,\dots ,p_d\pm t),
$$
provided the right-hand sides are well-defined and belongs to $[-\infty ,\infty ]^d$.
Moreover, we set $1/0=\infty$, $1/\infty =0$ and $1/\mabfp =(1/p_1,\dots ,1/p_d)$.

\par

We also let
$$
\max (\mabfp ) \equiv \max (p_1,\dots ,p_d)
\quad \text{and}\quad
\min (\mabfp ) \equiv \min (p_1,\dots ,p_d),
$$
and note that if $\max (\mabfp ) <\infty$, then $\ell _0 (\Lambda )$ is dense in
$\ell ^\mabfp _{\sigma ,(\omega )} (\Lambda )$.

\par

\subsection{Modulation spaces}\label{subsec1.2}

\par

Next we define modulation spaces.
Let $\phi \in \maclS _{1/2}(\rr d)\setminus 0$. For any $p,q\in (0.\infty ]$
and $\omega \in \mascP _E(\rr {2d})$,
the modulation spaces $M^{p,q}_{(\omega )}(\rr d)$ and $W^{p,q}_{(\omega )}(\rr d)$
are the sets of all $f\in \maclS _{1/2}'(\rr d)$ such that $V_\phi f\in
L^{p,q}_{(\omega )}(\rr {2d})$ and $V_\phi f\in
L^{p,q}_{*,(\omega )}(\rr {2d})$, respectively. We equip these spaces
with the quasi-norms
$$
\nm f{M^{p,q}_{(\omega )}}\equiv \nm {V_\phi f}{L^{p,q}_{(\omega )}}
\quad \text{and}\quad
\nm f{W^{p,q}_{(\omega )}}\equiv \nm {V_\phi f}{L^{p,q}_{*,(\omega )}},
$$
respectively.
One of the most common types of modulation spaces concerns
$M^{p,q}_{(\omega )}(\rr d)$ with $\omega \in \mascP (\rr {2d})$,
and are sometimes called standard modulation spaces.
They were introduced by Feichtinger in \cite{Fei1} for certain
choices of $\omega$.

\par

More generally, for any $\sigma \in \operatorname S_{2d}$,
$\mabfp \in (0,\infty ]^{2d}$ and $\omega \in \mascP _E(\rr {2d})$,
the modulation space $M^\mabfp _{\sigma ,(\omega )}(\rr d)$
is the set of all $f\in \maclS _{1/2}'(\rr d)$ such that $V_\phi f\in
L^{\mabfp}_{\sigma ,(\omega )}(\rr {2d})$, and we equip
$M^{\mabfp}_{\sigma (\omega )}(\rr d)$ with the quasi-norm
\begin{equation}\label{modnorm2}
\nm f{M^{\mabfp}_{\sigma , (\omega )} }\equiv
\nm {V_\phi f}{L^{\mabfp}_{\sigma ,(\omega )}}.
\end{equation}

\par

For conveniency we set $M^p_{(\omega )}=M^{p,p}_{(\omega)}$, and if
$\omega =1$ everywhere, then set
$$
M^\mabfp =M^\mabfp _{\sigma ,(\omega )},\quad
M^{p,q} = M^{p,q}_{(\omega )},\quad
W^{p,q} = W^{p,q}_{(\omega )}
\quad \text{and}\quad
M^{p} = M^{p}_{(\omega )}.
$$

\par

In the following propositions we list some properties for modulation
spaces,
and refer to \cite{Fei1,FG1,Gc2,Toft5} for proofs.

\par

\begin{prop}\label{p1.4A}
Let $\sigma \in \operatorname S_{2d}$ and
$\mabfp \in (0,\infty ]^{2d}$. Then the following is true:
\begin{enumerate}
\item[{\rm{(1)}}] if $\omega \in \mascP  _{E}(\rr {2d})$, then $\Sigma _1(\rr d)
\subseteq M^{\mabfp}_{\sigma ,(\omega )}(\rr d) \subseteq \Sigma _1'(\rr d)$;

\vrum

\item[{\rm{(2)}}] if $\omega \in \mascP  _{E}(\rr {2d})$ satisfies \eqref{WeightExpEst}
for every $r >0$, then
$\maclS _1(\rr d)\subseteq M^{\mabfp}_{\sigma ,(\omega )}(\rr d)
\subseteq \maclS _1'(\rr d)$;

\vrum

\item[{\rm{(3)}}]  if $\omega \in \mascP (\rr {2d})$, then
$\mathscr S(\rr d)\subseteq M^{\mabfp}_{\sigma ,(\omega )}(\rr d)
\subseteq \mathscr S '(\rr d)$.
\end{enumerate}
\end{prop}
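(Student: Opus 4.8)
The plan is to prove Proposition~\ref{p1.4A} by reducing the three nested embeddings to corresponding statements about the short-time Fourier transform $V_\phi f$, using the weight estimates \eqref{WeightExpEst} and \eqref{moderateconseq} together with the known mapping properties of the STFT on Gelfand-Shilov spaces recalled in the preliminaries. The key observation is that membership of $f$ in $M^{\mabfp}_{\sigma ,(\omega )}$ is by definition \eqref{modnorm2} a statement that $V_\phi f \, \omega \in L^{\mabfp}_\sigma(\rr{2d})$, so each embedding amounts to comparing a mixed Lebesgue quasi-norm against the continuous semi-norms defining $\Sigma_1$, $\maclS_1$, or $\mascS$. Throughout I fix a window $\phi \in \Sigma_1(\rr d)\setminus 0$, which is admissible for all three cases.

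First I would treat the \emph{lower} (left-hand) embeddings, i.e.\ that the small test function space sits inside $M^{\mabfp}_{\sigma ,(\omega )}$. For part (1), take $f\in \Sigma_1(\rr d)$. Then $V_\phi f\in \Sigma_1(\rr{2d})$ by the recalled continuity of $T(f,\phi)=V_\phi f$, so for every $r>0$ one has a Gaussian-type decay estimate $|V_\phi f(x,\xi)|\lesssim e^{-r(|x|+|\xi|)}$. By \eqref{WeightExpEst} applied to $\omega\in\mascP_E(\rr{2d})$, there is some fixed $r_0>0$ with $\omega(x,\xi)\lesssim e^{r_0(|x|+|\xi|)}$; choosing the decay rate $r>r_0$ makes $V_\phi f\,\omega$ decay like $e^{-(r-r_0)(|x|+|\xi|)}$, which lies in every mixed quasi-norm space $L^{\mabfp}_\sigma$ for $\mabfp\in(0,\infty]^{2d}$ since such exponentially decaying functions are in every $L^{\mabfp}_\sigma$. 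Hence $f\in M^{\mabfp}_{\sigma ,(\omega )}$, and the inclusion is continuous. Part (2) is identical but starts from $f\in\maclS_1(\rr d)$, giving $V_\phi f\in\maclS_1(\rr{2d})$ with decay $e^{-r(|x|+|\xi|)}$ only for some $r>0$ rather than all; here the extra hypothesis that $\omega$ satisfies \eqref{WeightExpEst} for \emph{every} $r>0$ supplies the matching bound $\omega\lesssim e^{\ep(|x|+|\xi|)}$ with $\ep<r$, closing the estimate. Part (3) starts from $f\in\mascS(\rr d)$ and uses polynomial rather than exponential decay: when $\omega\in\mascP(\rr{2d})$ is of polynomial type, $V_\phi f$ decays faster than any polynomial, so $V_\phi f\,\omega$ still lies in every $L^{\mabfp}_\sigma$.

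For the \emph{upper} (right-hand) embeddings I would argue dually, establishing that $f\in M^{\mabfp}_{\sigma ,(\omega )}$ forces $f$ into the appropriate distribution space by testing $f$ against elements of the corresponding test function space. The concrete tool is the inversion/reconstruction formula for the STFT, writing $f$ as a superposition of time-frequency shifts of $\phi$ weighted by $V_\phi f$, together with the fact that for $g$ in $\Sigma_1$ (resp.\ $\maclS_1$, $\mascS$) the pairing $\scal{f}{g}$ can be dominated by pairing $V_\phi f$ against $V_\phi g$ via Moyal's identity. Using H\"older's inequality for mixed quasi-norms in the form $|\scal{V_\phi f}{V_\phi g}|\lesssim \nm{V_\phi f\,\omega}{L^{\mabfp}_\sigma}\,\nm{V_\phi g\,\omega^{-1}}{L^{\mabfp'}_\sigma}$, the first factor is finite by assumption, and the second is finite because $V_\phi g\,\omega^{-1}$ inherits the decay of $V_\phi g$ after absorbing $\omega^{-1}$ via the reciprocal weight estimate in \eqref{moderateconseq}. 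This shows $\scal{f}{g}$ is well defined and continuous in $g$, placing $f$ in the dual space $\Sigma_1'$ (resp.\ $\maclS_1'$, $\mascS'$).

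The main obstacle I anticipate is the bookkeeping for the mixed quasi-norm H\"older duality when some indices $p_j<1$: the standard conjugate-exponent argument must be replaced by the quasi-Banach version, and one must verify that the reconstruction formula and Moyal-type identity remain valid at the level of $\Sigma_{1/2}'$ distributions with $\mabfp\in(0,\infty]^{2d}$ rather than $[1,\infty]^{2d}$. This is precisely where the window-independence of the modulation quasi-norm and the density/continuity of the STFT maps recalled earlier must be invoked carefully; however, since these facts are quoted from \cite{Fei1,FG1,Gc2,Toft5,Toft8} and the weight estimates \eqref{moderateconseq}--\eqref{WeightExpEst} do all the analytic work, the argument stays at the level of comparing quasi-norms and does not require new estimates. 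The remaining details are routine and I would relegate them to the cited references.
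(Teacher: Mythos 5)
The paper itself does not prove Proposition \ref{p1.4A}; it refers to \cite{Fei1,FG1,Gc2,Toft5}, so there is no in-text argument to compare against. Your outline follows the standard route taken in those references, and the lower embeddings are handled correctly: the super-exponential (resp.\ exponential, super-polynomial) decay of $V_\phi f$ for $f$ in $\Sigma _1$, $\maclS _1$, $\mascS$, matched against \eqref{WeightExpEst}, does put $V_\phi f\, \omega$ into every $L^{\mabfp}_{\sigma}(\rr {2d})$.

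The upper embeddings, however, contain a genuine gap when some component of $\mabfp$ is below $1$. The inequality you propose,
$|\scal {V_\phi f}{V_\phi g}|\lesssim \nm {V_\phi f\, \omega}{L^{\mabfp}_{\sigma}}\, \nm {V_\phi g\, \omega ^{-1}}{L^{\mabfp '}_{\sigma}}$,
reduces for $p\le 1$ (where $p'=\infty$ by the paper's convention) to the claim $\nm {FG}{L^1}\lesssim \nm F{L^p}\nm G{L^\infty}$, which is false on $\rr {2d}$: an $L^{1/2}$ function built from tall, thin spikes at the integers can fail to be integrable against a fixed exponentially decaying $G$, even though its $L^{1/2}$ quasi-norm is finite. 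So the pairing $\scal fg$ is not controlled by the modulation quasi-norm of $f$ through H{\"o}lder alone. What saves the argument is that $V_\phi f$ is not an arbitrary element of $L^{\mabfp}_{\sigma ,(\omega )}$: it satisfies the reproducing inequality $|V_\phi f|\lesssim |V_\phi f|*|V_\phi \phi |$, which converts mixed quasi-norm control into a pointwise bound of the form $|V_\phi f(x,\xi )|\lesssim \omega _0(x,\xi )^{-1}$ for a suitable moderate weight $\omega _0$ (equivalently, one first invokes the embedding $M^{\mabfp}_{\sigma ,(\omega )}\subseteq M^{\infty}_{(\omega _0)}$ of Proposition \ref{p1.4B}\,(2) together with \eqref{WeightExpEst}), and only then pairs this $L^\infty$-type bound against the rapid decay of $V_\phi g$. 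This convolution step is the actual substance of the quasi-Banach case (it is where \cite{GaSa,Toft12} do their work), so it should be stated as the key lemma rather than dismissed as bookkeeping; once it is in place, the rest of your argument for all three parts goes through.
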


\par

\begin{prop}\label{p1.4B}
Let $\sigma \in \operatorname S_{2d}$, $r\in (0,1]$, $\mabfp ,\mabfp _j\in
(0,\infty ] ^{2d}$ and $\omega ,\omega _j,v\in \mascP  _{E}(\rr {2d})$, $j=1,2$,
be such that $r\le \mabfp$,
$\mabfp _1\le \mabfp _2$,  $\omega _2\lesssim \omega _1$, and
$\omega$ is $v$-moderate. Then the following is true:
\begin{enumerate}
\item if $\phi \in M^r_{(v)}(\rr d)\setminus 0$, then
$f\in M^{\mabfp}_{\sigma ,(\omega )}(\rr d)$, if and only if
\eqref{modnorm2} is finite.
In particular, $M^{\mabfp}_{\sigma ,(\omega )}(\rr d)$ is independent
of the choice of $\phi \in M^r_{(v)}(\rr d)\setminus 0$.
Moreover, $M^{\mabfp}_{\sigma ,(\omega )}(\rr d)$ is a quasi-Banach
space under the quasi-norm in \eqref{modnorm2}, and different
choices of $\phi$ give rise to equivalent quasi-norms.

\par

If in addition $\mabfp \ge 1$, then $M^{\mabfp}_{\sigma ,(\omega )}
(\rr d)$ is a Banach space with norm \eqref{modnorm2};

\vrum

\item[{\rm{(2)}}] $M^{\mabfp _1}_{\sigma ,(\omega _1)}(\rr d)\subseteq
M^{\mabfp _2}_{\sigma ,(\omega _2)}(\rr d)$.
\end{enumerate}
\end{prop}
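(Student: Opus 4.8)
The plan is to prove Proposition \ref{p1.4B} by transferring all questions about $f\in \maclS _{1/2}'(\rr d)$ to statements about the short-time Fourier transform $V_\phi f$ in the mixed quasi-norm space $L^{\mabfp}_{\sigma ,(\omega )}(\rr {2d})$, and then to exploit the convolution-type reproducing identity for the STFT together with the moderateness of the weights. The two parts are logically intertwined: once the independence of the window $\phi$ is established in part (1), the embedding in part (2) is almost immediate, so I would organize the argument around part (1) first.

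For part (1), the key tool is the standard reproducing formula: for two windows $\phi ,\psi \in \maclS _1(\rr d)$ (or more generally in a suitable $M^r_{(v)}$) with $\scal \psi \phi \neq 0$, one has pointwise
\begin{equation*}
|V_\phi f(x,\xi )| \lesssim \left( |V_\psi f| \ast |V_\phi \psi | \right)(x,\xi ),
\end{equation*}
which follows from writing $V_\phi f$ in terms of $V_\psi f$ via the coherent-state resolution of the identity. The main obstacle, and the genuinely new point compared with the Banach case, is that for $\mabfp <1$ the spaces are only quasi-Banach, so the usual Young/convolution inequality on $L^{\mabfp}_{\sigma ,(\omega )}$ is unavailable in its classical form. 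I would handle this by the now-standard device for quasi-Banach modulation spaces: choose $r\in (0,1]$ with $r\le \mabfp$, use the $r$-triangle inequality, and invoke a convolution estimate of the shape $\nm {F\ast G}{L^{\mabfp}_{\sigma ,(\omega )}}\lesssim \nm F{L^{\mabfp}_{\sigma ,(\omega )}}\nm G{L^r_{(v)}}$, valid precisely because the window gram kernel $V_\phi \psi$ decays rapidly and hence lies in $L^r_{(v)}(\rr {2d})$ for every $r>0$. The hypothesis $\phi \in M^r_{(v)}(\rr d)\setminus 0$ guarantees that $V_\phi \psi $ belongs to $L^r_{(v)}$, controlled by the submultiplicativity of $v$ and the $v$-moderateness of $\omega$, so the weight can be split as $\omega (x,\xi )\lesssim \omega (y,\eta )v(x-y,\xi -\eta )$ under the convolution. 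Applying this with the roles of $\phi$ and $\psi$ interchanged yields the two-sided estimate
\begin{equation*}
\nm {V_\phi f}{L^{\mabfp}_{\sigma ,(\omega )}} \asymp \nm {V_\psi f}{L^{\mabfp}_{\sigma ,(\omega )}},
\end{equation*}
giving both the window independence and the equivalence of quasi-norms; completeness then follows from a routine Cauchy-sequence argument using that $L^{\mabfp}_{\sigma ,(\omega )}$ is itself a complete quasi-normed space and that convergence there forces convergence in $\maclS _{1/2}'$ by Proposition \ref{p1.4A}. The closing remark that $\mabfp \ge 1$ upgrades the quasi-norm to a genuine norm is simply the observation that each iterated $L^{p_k}$ step is then a Minkowski-type norm.

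For part (2), the inclusion $M^{\mabfp _1}_{\sigma ,(\omega _1)}(\rr d)\subseteq M^{\mabfp _2}_{\sigma ,(\omega _2)}(\rr d)$, I would fix a single admissible window $\phi$ and compare the two quasi-norms of $V_\phi f$ directly at the level of $L^{\mabfp}_{\sigma ,(\omega )}$. The weight comparison $\omega _2\lesssim \omega _1$ immediately absorbs the weighted pointwise bound, reducing matters to the unweighted monotonicity $\nm \cdo {L^{\mabfp _2}_{\sigma }}\lesssim \nm \cdo {L^{\mabfp _1}_{\sigma }}$ when $\mabfp _1\le \mabfp _2$. This embedding for mixed Lebesgue quasi-norms is not the naive $\ell^{p_1}\subseteq \ell^{p_2}$ statement on all of $\rr{2d}$ but rather the localized version obtained because $V_\phi f$ is, after convolving with the rapidly decaying $V_\phi \phi$, effectively a superposition of uniformly band-limited pieces; on each unit cell the finite-measure embedding $L^{p_1}\hookrightarrow L^{p_2}$ holds, and summing over cells in the coarser index gives the global bound. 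The one subtlety to check is that this cell-wise argument respects the iterated ordering prescribed by the permutation $\sigma$, which it does since the comparison $\mabfp _1\le \mabfp _2$ is componentwise and each iterated norm in the definition is monotone in its own exponent. Combining the weight absorption with this monotonicity completes the inclusion.
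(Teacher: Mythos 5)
Your overall strategy --- change of window via the reproducing inequality $|V_\phi f|\lesssim |V_\psi f|\ast |V_\phi \psi |$, a weighted convolution estimate to get equivalence of quasi-norms, and local boundedness plus cell-wise monotonicity for part (2) --- is the standard route; the paper gives no proof of this proposition but refers to the literature, where precisely this scheme is carried out in the quasi-Banach setting (Galperin--Samarah \cite{GaSa} and \cite{Toft12}). The problem is the convolution estimate you invoke in part (1),
\begin{equation*}
\nm {F\ast G}{L^{\mabfp}_{\sigma ,(\omega )}}\lesssim \nm F{L^{\mabfp}_{\sigma ,(\omega )}}\, \nm G{L^r_{(v)}},
\end{equation*}
which is false for a general $F\in L^{\mabfp}_{\sigma ,(\omega )}$ when $\min (\mabfp )<1$, no matter how rapidly $G$ decays. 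On $\mathbf R$, for example, $F(x)=|x|^{-3/2}\chi _{\{ |x|\le 1\} }$ lies in $L^{1/2}$ but is not locally integrable, so $F\ast G\equiv +\infty$ for any positive Schwartz function $G$. Rapid decay of the kernel $V_\phi \psi$ does not rescue the inequality; what rescues it is the special structure of $F=V_\psi f$.

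The missing ingredient is therefore the lemma that for $f$ in a modulation space the transform $V_\psi f$ belongs to the Wiener amalgam space $W(L^\infty ,L^{\mabfp}_{\sigma ,(\omega )})$ with quasi-norm equivalent to $\nm {V_\psi f}{L^{\mabfp}_{\sigma ,(\omega )}}$; equivalently, one discretizes by Gabor frames and uses that $\ell ^{\mabfp}\ast \ell ^{r}\subseteq \ell ^{\mabfp}$ \emph{does} hold when $r\le \min (\mabfp ,1)$ (this is the sequence-space convolution inequality the paper quotes from \cite{Toft12} in the proof of Theorem \ref{matrixcont2}). You in fact appeal to exactly this local-boundedness phenomenon in part (2) (the decomposition into uniformly bounded pieces on unit cells, which is also needed there since the naive $L^{p_1}\subseteq L^{p_2}$ fails on a set of infinite measure), but it is equally indispensable in part (1); without it the step from $|V_\phi f|\lesssim |V_\psi f|\ast |V_\phi \psi |$ to the two-sided quasi-norm equivalence does not close. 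Once that amalgam (or Gabor-discretization) lemma is inserted, the remainder of your argument --- symmetry in $\phi$ and $\psi$, completeness via the embedding into $\Sigma _1'$ from Proposition \ref{p1.4A}, the norm statement for $\mabfp \ge 1$, and part (2) --- is sound.
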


\par

Next we discuss Gabor expansions, and start by recalling some notions.
It follows from the analysis in Chapters 11--14 in \cite{Gc2}
that the operators in the following definition are well-defined and
continuous.

\par

\begin{defn}\label{DefAnSynGabOps}
Let $\Lambda =\Lambda _{[\theta ]}$ be as in
\eqref{LambdaDef}, $\omega ,v\in \mascP _E(\rr {2d})$ be such that
$\omega$ is $v$-moderate, and let $\phi ,\psi \in M^1_{(v)}(\rr d)$.
\begin{enumerate}
\item The \emph{analysis operator} $C^{\Lambda}_\phi$ is the operator from
$M^\infty _{(\omega )}(\rr d)$ to $\ell ^{\infty}_{(\omega)}(\Lambda )$,
given by
$$
C^\Lambda _\phi f \equiv \{ V_\phi f(j,\iota ) \} _{j,\iota \in \Lambda} \text ;
$$

\vrum

\item The \emph{synthesis operator} $D^{\Lambda}_\psi$ is the operator from
$\ell ^\infty _{(\omega )}(\Lambda )$ to $M^\infty _{(\omega)}(\rr d)$,
given by
$$
D^\Lambda _\psi c \equiv \sum _{j,\iota \in \Lambda} c_{j,\iota}
e^{i\scal \cdo {\iota }}\phi (\cdo -j)\text ;
$$

\vrum

\item The \emph{Gabor frame operator} $S^{\Lambda}_{\phi ,\psi}$
is the operator on $M^\infty _{(\omega )}(\rr d)$,
given by $D^\Lambda _\psi \circ
C^\Lambda _\phi$, i.{\,}e.
$$
S^{\Lambda}_{\phi ,\psi}f \equiv \sum _{j,\iota \in \Lambda} V_\phi f(j,\iota )
e^{i\scal \cdo {\iota }}\psi (\cdo -j).
$$
\end{enumerate}
\end{defn}

\par

We usually assume that
$\theta _1= \cdots =\theta _d=\ep >0$, and then we set
$\Lambda _\ep = \Lambda _{[\theta ]}$.

\par

The proof of the following result is omitted since the result follows from
Theorem 13.1.1 and other results in \cite{Gc2} (see also Theorem S in
\cite{Gc1}).

\par

\begin{prop}\label{ThmS}
Let $\Lambda$ be as in \eqref{LambdaDef},
$v\in \mascP _E(\rr {2d})$ be submultiplicative, and $\phi \in
M^1_{(v)}(\rr d)\setminus 0$.
Then the following is true:
\begin{enumerate}
\item if
\begin{equation}\label{DualFrames}
\{ e^{i\scal {\cdo }{\iota }}\phi (\cdo -j) \} _{j,\iota \in \Lambda}
\quad \text{and}\quad
\{ e^{i\scal {\cdo }{\iota }}\psi (\cdo -j) \} _{j,\iota \in \Lambda}
\end{equation}
are dual frames to each others, then $\psi \in M^1_{(v)}(\rr d)$;

\vrum

\item there is a constant $\ep _0>0$ such that the frame operator
$S_{\phi ,\phi}^\Lambda$ is a homeomorphism on $M^1_{(v)}(\rr d)$
and \eqref{DualFrames} are dual frames, when $\Lambda =\ep \zz {d}$,
$\ep \in (0,\ep _0]$ and $\psi = (S_{\phi ,\phi}^\Lambda )^{-1}\phi$.
\end{enumerate}
\end{prop}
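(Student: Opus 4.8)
The plan is to realize the Gabor frame operator as an element of a Banach algebra of time-frequency operators adapted to the weight $v$, and then to invert it inside that algebra. Throughout, write $\Pi (j,\iota )$ for the time-frequency shift $f\mapsto e^{i\scal \cdo \iota }f(\cdo -j)$, let $\Lambda ^\circ $ denote the lattice adjoint to $\Lambda \times \Lambda $ in $\rr {2d}$, and let $\mathcal A_v$ be the set of operators of the form $\sum _{\lambda \in \Lambda ^\circ }a_\lambda \Pi (\lambda )$ with $(a_\lambda )\in \ell ^1_{(v)}(\Lambda ^\circ )$, normed by $\nm {(a_\lambda )}{\ell ^1_{(v)}}$. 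Since $v$ is submultiplicative, $\mathcal A_v$ is a Banach $*$-algebra under composition, and because $\phi \in M^1_{(v)}$ forces $V_\phi \phi $ into the Wiener amalgam space $W(C,\ell ^1_{(v)})$, the Janssen representation expresses $S^\Lambda _{\phi ,\phi }$ as such a series with coefficients $a_\lambda $ proportional to the samples $V_\phi \phi (\lambda )$; hence $S^\Lambda _{\phi ,\phi }\in \mathcal A_v$. A key preliminary observation is that every element of $\mathcal A_v$ acts continuously on $M^1_{(v)}$ (and on $L^2$), so it suffices to produce the inverse inside $\mathcal A_v$.

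For part (2) I would exploit that $\Lambda _\ep =\ep \zz d$ becomes fine as $\ep \to 0$, while its adjoint $\Lambda _\ep ^\circ $ becomes sparse, with all nonzero points escaping to infinity. Writing $S^{\Lambda _\ep }_{\phi ,\phi }=c_0(I+R_\ep )$ with $c_0\asymp \ep ^{-2d}\nm \phi {L^2}^2$ the diagonal ($\lambda =0$) term, the off-diagonal remainder satisfies $\nm {R_\ep }{\mathcal A_v}\lesssim \nm \phi {L^2}^{-2}\sum _{\lambda \in \Lambda _\ep ^\circ \setminus 0}|V_\phi \phi (\lambda )|v(\lambda )$, which tends to $0$ as $\ep \to 0$ since the function $V_\phi \phi \, v\in W(C,\ell ^1)$ is being sampled on an increasingly sparse, far-away lattice. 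Hence there is $\ep _0>0$ with $\nm {R_\ep }{\mathcal A_v}<1$ for $\ep \le \ep _0$, and the Neumann series $c_0^{-1}\sum _{k\ge 0}(-R_\ep )^k$ converges in $\mathcal A_v$ to $(S^{\Lambda _\ep }_{\phi ,\phi })^{-1}$. In particular $(S^{\Lambda _\ep }_{\phi ,\phi })^{-1}\in \mathcal A_v$ is bounded on $M^1_{(v)}$, so $S^{\Lambda _\ep }_{\phi ,\phi }$ is a homeomorphism there and $\psi =(S^{\Lambda _\ep }_{\phi ,\phi })^{-1}\phi \in M^1_{(v)}$; the identity $S^{-1}S=\mathrm{Id}$ then shows that the two systems in \eqref{DualFrames} are dual frames. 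The merit of this route is that, for fine lattices, invertibility comes from a convergent Neumann series and no deep spectral theory is needed.

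For part (1) the lattice is arbitrary, so $S^\Lambda _{\phi ,\phi }$ need not be close to a multiple of the identity and the Neumann series is unavailable. Here the decisive ingredient is spectral invariance: $\mathcal A_v$ is inverse-closed in $\mathcal B(L^2)$, i.e. if $A\in \mathcal A_v$ is invertible on $L^2$ then $A^{-1}\in \mathcal A_v$ (the Gr{\"o}chenig--Leinert theorem for symmetric time-frequency algebras, see \cite{Gc2}). Assuming the two systems in \eqref{DualFrames} are dual frames, $\{ \Pi (j,\iota )\phi \} _{j,\iota \in \Lambda }$ is in particular a frame, so $S^\Lambda _{\phi ,\phi }$ is invertible on $L^2$; inverse-closedness gives $(S^\Lambda _{\phi ,\phi })^{-1}\in \mathcal A_v$, whence the canonical dual $S^{-1}\phi \in M^1_{(v)}$. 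To reach the prescribed window $\psi $ I would then invoke the Wexler--Raz biorthogonality relations, which characterise all dual windows through the values $\scal \psi {\Pi (\lambda )\phi }$, $\lambda \in \Lambda ^\circ $, together with the already established $M^1_{(v)}$-boundedness of the relevant analysis and synthesis maps, so that membership is transported from $\phi $ to $\psi $.

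I expect the spectral-invariance step in part (1) to be the main obstacle: proving that $\mathcal A_v$ is inverse-closed in $\mathcal B(L^2)$ for the weights considered is exactly the hard, non-commutative Wiener-type lemma, resting on the symmetry of the underlying twisted convolution algebra. Everything else---the Janssen representation, the amalgam-space estimate forcing $\nm {R_\ep }{\mathcal A_v}\to 0$, and the action of $\mathcal A_v$ on $M^1_{(v)}$---is comparatively routine bookkeeping with time-frequency shifts once the algebra framework is in place.
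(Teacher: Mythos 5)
The paper gives no proof of Proposition \ref{ThmS}: it is stated with the remark that it follows from Theorem 13.1.1 and related results in \cite{Gc2} (see also Theorem S in \cite{Gc1}). Your sketch is essentially a reconstruction of the proofs of those cited results, so in that sense the route coincides with the paper's. In particular your treatment of part (2) is the standard and correct one: the Janssen representation puts $S^{\Lambda _\ep}_{\phi ,\phi}$ into the algebra $\mathcal A_v$ because $\phi \in M^1_{(v)}$ forces $V_\phi \phi \cdot v\in W(C,\ell ^1)$, the nonzero points of the adjoint lattice $\Lambda _\ep ^\circ$ escape to infinity as $\ep \to 0$, so the off-diagonal part has small $\ell ^1_{(v)}$-norm and the Neumann series inverts $S^{\Lambda _\ep}_{\phi ,\phi}$ inside $\mathcal A_v$, hence on $M^1_{(v)}$. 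This argument needs no spectral theory and is valid for every submultiplicative $v\in \mascP _E(\rr {2d})$.

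Part (1) has two genuine gaps as you have written it. First, the inverse-closedness of $\mathcal A_v$ in $\maclB (L^2)$ (Gr{\"o}chenig--Leinert) is \emph{not} available for arbitrary submultiplicative $v\in \mascP _E$: it requires the Gelfand--Raikov--Shilov condition $\lim _{n\to \infty}v(nx)^{1/n}=1$, and $\mascP _E$ admits weights of genuinely exponential growth, for which weighted $\ell ^1$-type twisted convolution algebras are known not to be inverse-closed. So your argument covers only GRS weights, and you should say so (the hypothesis of the proposition does not impose it). Second, the concluding step --- invoking Wexler--Raz to ``transport'' $M^1_{(v)}$-membership from $\phi$ to the prescribed dual window $\psi$ --- does not work. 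Spectral invariance controls the \emph{canonical} dual $(S^\Lambda _{\phi ,\phi})^{-1}\phi$; but for a redundant lattice the Wexler--Raz relations only fix the inner products $\scal \psi{\Pi (\lambda )\phi}$, $\lambda \in \Lambda ^\circ$, and the dual windows form an infinite-dimensional affine family $(S^\Lambda _{\phi ,\phi})^{-1}\phi +V^\perp$, where $V$ is the closed span of the adjoint system; nothing in your argument places an arbitrary element of $V^\perp$ in $M^1_{(v)}$, and in general it is not there. The statement is safe (and is the only case the paper uses, cf.\ Remark \ref{RemThmS} and part (2)) when $\psi$ is the canonical dual; for an arbitrary dual window your last step is a gap, not a proof.
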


\par

We also recall the following restatement of \cite[Theorem 3.7]{Toft12} (see also
Corollaries 12.2.5 and 12.2.6 in \cite{Gc2} and Theorem 3.7 in \cite{GaSa}). Here
and in what follows we let $\Lambda ^2=\Lambda \times \Lambda$.

\par

\begin{prop}\label{ConseqThmS}
Let $\Lambda$ be as in \eqref{LambdaDef},
$\mabfp  \in
(0,\infty ]^{2d}$, $r\in (0,1]$ be such that $r\le \min (\mabfp )$, $\sigma
\in \operatorname {S}_{2d}$, and let $\omega ,v\in \mascP _E(\rr {2d})$
be such that $\omega$ is $v$-moderate. Also let
$\phi ,\psi \in M^r_{(v)}(\rr d)$ be such that \eqref{DualFrames}
are dual frames to each other. Then the following is true:
\begin{enumerate}
\item The operators $S_{\phi ,\psi} ^\Lambda \equiv D_\psi \circ C_\phi$ and
$S_{\psi ,\phi} ^\Lambda \equiv D_\phi \circ C_\psi$ are both the identity map
on $M^\mabfp _{\sigma ,(\omega )}(\rr d)$, and if  $f\in M^\mabfp
_{(\omega )}(\rr d)$, then
\begin{align}
f &= \sum _{j,\iota \in \Lambda} (V_\phi f)(j,\iota )
e^{i\scal {\cdo }{\iota }}\psi (\cdo -j)\notag
\\[1ex]
&=
\sum _{j,\iota \in \Lambda } (V_\psi f)(j,\iota )
e^{i\scal {\cdo }{\iota }}\phi (\cdo -j),\label{GabExpForm}
\end{align}
with unconditional norm-convergence in $M^\mabfp _{\sigma ,(\omega )}$
when $\max (\mabfp ) <\infty$, and with convergence in
$M^\infty _{(\omega)}$ with respect to the weak$^*$ topology otherwise;

\vrum

\item if  $f\in M^\infty _{(1/v)}(\rr d)$, then
$$
\displaystyle{\nm f{M^\mabfp _{\sigma ,(\omega )}}
\asymp
\nm {V_\phi f}
{\ell ^\mabfp _{\sigma ,(\omega )} (\Lambda ^2) }
\asymp
\nm {V_\psi f}
{\ell ^\mabfp _{\sigma ,(\omega )} (\Lambda ^2)} }.
$$
\end{enumerate}
\end{prop}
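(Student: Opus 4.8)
The plan is to reduce the whole statement to two boundedness facts --- continuity of the analysis operator $C_\phi \colon M^\mabfp _{\sigma ,(\omega )}(\rr d)\to \ell ^\mabfp _{\sigma ,(\omega )}(\Lambda ^2)$ and of the synthesis operator $D_\psi \colon \ell ^\mabfp _{\sigma ,(\omega )}(\Lambda ^2)\to M^\mabfp _{\sigma ,(\omega )}(\rr d)$ --- and then to combine these with the dual frame hypothesis on \eqref{DualFrames}. The single technical device replacing the classical locally convex Gabor machinery is a quasi-Banach Young inequality on the lattice: when $\omega$ is $v$-moderate and $r\le \min (\mabfp )$,
$$
\nm {a*b}{\ell ^\mabfp _{\sigma ,(\omega )}(\Lambda ^2)}
\lesssim
\nm a{\ell ^r_{(v)}(\Lambda ^2)}\, \nm b{\ell ^\mabfp _{\sigma ,(\omega )}(\Lambda ^2)} ,
$$
in which the failure of the triangle inequality for $\min (\mabfp )<1$ is compensated by passing from $\ell ^1$ to $\ell ^r$ in the convolving factor. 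The hypothesis $\phi ,\psi \in M^r_{(v)}(\rr d)$ enters precisely through the decay of the cross short-time Fourier transforms $V_g\phi$ and $V_g\psi$, which play the role of $a$.

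First I would prove the synthesis bound. Fixing an auxiliary window $g\in \Sigma _1(\rr d)\setminus 0$ and a finitely supported sequence $c$, a direct computation gives the pointwise domination
$$
|V_g(D_\psi c)(x,\xi )|
\le
\sum _{j,\iota \in \Lambda } |c_{j,\iota}|\, |(V_g\psi )(x-j,\xi -\iota )| ,
$$
so that $V_g(D_\psi c)$ is controlled by a semi-discrete convolution of $|c|$ with $V_g\psi$. Since $\psi \in M^r_{(v)}(\rr d)$, the kernel $V_g\psi$ lies in $L^r_{(v)}(\rr {2d})$; discretizing the mixed quasi-norm over the fundamental cells of $\Lambda$ and invoking the inequality above then yields $\nm {D_\psi c}{M^\mabfp _{\sigma ,(\omega )}}\lesssim \nm c{\ell ^\mabfp _{\sigma ,(\omega )}}$, so $D_\psi$ extends continuously. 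The analysis bound is the companion estimate: the reproducing-kernel inequality for the short-time Fourier transform dominates the sampled values $\{ V_\phi f(j,\iota )\}$ by a convolution of the continuous $|V_\phi f|$ with the decaying kernel $|V_g\phi |\in L^r_{(v)}$, and the continuous analogue of the same quasi-Young inequality gives $\nm {C_\phi f}{\ell ^\mabfp _{\sigma ,(\omega )}}\lesssim \nm f{M^\mabfp _{\sigma ,(\omega )}}$.

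With both operators bounded, the identities $D_\psi C_\phi =D_\phi C_\psi =\operatorname{Id}$ follow by continuity from the case of $\Sigma _1(\rr d)$, on which they hold because \eqref{DualFrames} are dual frames in $L^2$ (Proposition \ref{ThmS} also guaranteeing $\phi ,\psi$ are admissible windows, which here is automatic from $M^r_{(v)}\subseteq M^1_{(v)}$). Since $\Sigma _1(\rr d)$ is contained in every $M^\mabfp _{\sigma ,(\omega )}(\rr d)$ by Proposition \ref{p1.4A} and is dense there when $\max (\mabfp )<\infty$, both the operator identities and the unconditional norm-convergence of \eqref{GabExpForm} extend to all such $\mabfp$; unconditionality is inherited from the convolution estimate, which is insensitive to the order of summation. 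When $\max (\mabfp )=\infty$ finitely supported coefficients are no longer dense, and I would instead use that $M^\infty _{(\omega )}(\rr d)$ is a dual space: the synthesis operator is weak$^*$ continuous and the partial sums are uniformly bounded, so the identity persists and the expansion converges in the weak$^*$ topology of $M^\infty _{(\omega )}$. Finally, part (2) is immediate, since $\nm {V_\phi f}{\ell ^\mabfp _{\sigma ,(\omega )}}=\nm {C_\phi f}{\ell ^\mabfp _{\sigma ,(\omega )}}\lesssim \nm f{M^\mabfp _{\sigma ,(\omega )}}$ by analysis boundedness, while the reconstruction $f=D_\psi C_\phi f$ together with the synthesis bound reverses the estimate. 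I expect the main obstacle to be the verification of the mixed quasi-norm Young inequality for an arbitrary permutation $\sigma$: interchanging the iterated $L^{p_k}$ quasi-norms with the convolution is delicate once $\min (\mabfp )<1$, and one must use the rapid decay of $V_g\phi$ and $V_g\psi$ to keep the estimate uniform over all orderings, including the weak$^*$ setting.
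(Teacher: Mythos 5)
The paper does not prove this proposition: it is recalled as a restatement of Theorem~3.7 in \cite{Toft12} (with pointers to Corollaries~12.2.5--12.2.6 in \cite{Gc2} and Theorem~3.7 in \cite{GaSa}), so there is no in-paper argument to compare against. Your outline is essentially a reconstruction of the proof strategy of those references --- boundedness of $C_\phi$ and $D_\psi$ via (semi-)discrete convolution estimates with an $\ell^r_{(v)}$-majorant, followed by density when $\max(\mabfp)<\infty$ and a weak$^*$ argument otherwise --- and that strategy is the right one.

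One step as you state it would fail, and it is precisely the crux of the quasi-Banach case. For the analysis bound you invoke ``the continuous analogue of the same quasi-Young inequality'' to pass from $|V_\phi f|\lesssim |V_g f|*|V_g\phi|$ to the $\ell^\mabfp$-bound on the samples. But for $\min(\mabfp)<1$ there is no such continuous analogue: $L^p * L^r\not\subseteq L^p$ for $p<1$ (translation-averaging destroys $L^p$-integrability; already two characteristic functions give a counterexample to $\nm{f*g}{L^p}\lesssim\nm f{L^p}\nm g{L^r}$). The correct vehicle is the Wiener amalgam space $W(L^\infty,\ell^r_{(v)})$: one uses that $V_g\phi\in W(L^\infty,\ell^r_{(v)})$ (which holds because $\phi\in M^r_{(v)}$ and $g\in\Sigma_1$, not merely because $V_g\phi\in L^r_{(v)}$) and the inclusion $W(L^\infty,\ell^\mabfp_{\sigma,(\omega)})*W(L^\infty,\ell^r_{(v)})\subseteq W(L^\infty,\ell^\mabfp_{\sigma,(\omega)})$ for $r\le\min(1,\min(\mabfp))$, after which sampling is controlled by the amalgam norm. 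The same remark applies to your synthesis bound: the discretization over fundamental cells of $\Lambda$ requires the cell-wise suprema $\sup_{X\in Q}|V_g\psi(X-\mabfj)|$ to be $\ell^r_{(v)}$-summable, i.e.\ again the amalgam norm of $V_g\psi$, not its $L^r_{(v)}$ norm. Both embeddings do hold under your hypotheses, so the proof can be completed along your lines, but as written the key inequality is asserted in a form that is false; this is exactly the point that \cite{GaSa} and \cite{Toft12} are organized around. The remaining steps (density of $\Sigma_1$ for $\max(\mabfp)<\infty$, weak$^*$ continuity for the endpoint, and the derivation of (2) from $D_\psi C_\phi=\operatorname{Id}$ on $M^\infty_{(1/v)}$) are sound.
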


\par

Let $v$, $\phi$ and $\Lambda$ be as in Proposition \ref{ThmS}. Then
$(S_{\phi ,\phi}^\Lambda )^{-1}\phi$
is called the \emph{canonical dual window of  $\phi$}, with respect to
$\Lambda$. We have
$$
S_{\phi ,\phi}^\Lambda (e^{i\scal \cdo {\iota }}f(\cdo -j)) =
e^{i\scal \cdo {\iota }}(S_{\phi ,\phi}^\Lambda f)(\cdo -j),
$$
when $f\in M^\infty _{(1/v)}(\rr d)$ and $j,\iota \in \Lambda$.
The series in \eqref{GabExpForm}
are called \emph{Gabor expansions of $f$} with respect to $\phi$ and
$\psi$.

\par

\begin{rem}\label{RemThmS}
There are several ways to achieve dual frames
\eqref{DualFrames} satisfying the required properties in
Proposition \ref{ConseqThmS}. In fact, let
$v,v_{0}\in \mascP _E(\rr {2d})$ be submultiplicative such that
$\omega $ is $v$-moderate and $L^1_{(v_0)}(\rr {2d})\subseteq
L^r(\rr {2d})$. Then Proposition \ref{ThmS} guarantees that
for some choice of $\phi ,\psi \in M^1_{(v_0v)}(\rr d)\subseteq
M^r_{(v)}(\rr d)$ and lattice $\Lambda$ in \eqref{LambdaDef},
the sets in
\eqref{DualFrames} are dual frames to each others, and that
$\psi = (S_{\phi ,\phi}^\Lambda )^{-1}\phi$.
\end{rem}

\par

In the sequel we usually assume that $\Lambda =\Lambda _\ep$,
with $\ep >0$ small enough
such that the hypotheses in Propositions \ref{ThmS} and \ref{ConseqThmS}
are fulfilled, and that the window functions and their
duals belong to $M^r_{(v)}$ for every $r>0$. This
is always possible, in view of Remark \ref{RemThmS}.

\subsection{Classes of matrices}\label{subsec1.5}

\par

In what follows we let
$\Lambda$ be a in \eqref{LambdaDef}, $A$ be the complex matrix
$(a(j,k))_{j,k\in \Lambda}$, $p,q\in
(0,\infty ]$, $\omega$ be a map from $\Lambda ^2$
to $\mathbf R_+$, and
\begin{multline}\label{haomegadef}
h_{A,p,\omega }(k) \equiv \nm {H_{A,\omega}(\cdo ,k)}{\ell ^p},
\\[1ex]
\text{where}
\quad
H_{A,\omega}(j,k)=a(j,j-k)\omega (j,j-k).
\end{multline}

\par

\begin{defn}\label{matrixset1}
Let $0<p,q\le \infty$, $\Lambda$ be as in \eqref{LambdaDef}
and let $\omega$ be a map
from $\Lambda ^2$ to $\mathbf R_+$.
\begin{enumerate}
\item The set $\mathbb U_0(\Lambda )$ consists of matrices $(a(j,k))_{j,k\in \Lambda}$
such that at most finite numbers of $a(j,k)$ are non-zero;

\vrum

\item The set $\mathbb U^{p,q}(\omega ,\Lambda )$ consists of all
matrices $A=(a(j,k))_{j,k\in \Lambda}$ such that
$$
\nm A{\mathbb U^{p,q}(\omega ,\Lambda )} \equiv \nm {h_{A,p,\omega }}
{\ell ^q(\Lambda )},
$$
is finite, where $h_{A,p,\omega }$ is given by \eqref{haomegadef}. Furthermore,
$\mathbb U^{p,q}_0(\omega ,\Lambda )$ is the completion of
$\mathbb U_0(\Lambda )$ under the quasi-norm $\nm {\cdo}{\mathbb
U^{p,q}(\omega ,\Lambda )}$.
\end{enumerate}
\end{defn}

\par

For conveniency we set $\mathbb U^p(\omega ,\Lambda )=\mathbb
U^{p,p}(\omega ,\Lambda )$, and if $\omega =1$ everywhere, then
we set $\mathbb U^{p,q}(\Lambda )= \mathbb U^{p,q}(\omega ,
\Lambda )$ and $\mathbb U^{p}(\Lambda )=
\mathbb U^{p}(\omega ,\Lambda )$.

\par

\subsection{Pseudo-differential operators}

\par

Next we recall some properties in pseudo-differential calculus.
Let $s\ge 1/2$, $a\in \maclS _s 
(\rr {2d})$, and $t\in \mathbf R$ be fixed. Then the
pseudo-differential operator $\op _t(a)$
is the linear and continuous operator on $\maclS _s (\rr d)$, given by
\begin{equation}\label{e0.5}
(\op _t(a)f)(x)
=
(2\pi  ) ^{-d}\iint a((1-t)x+ty,\xi )f(y)e^{i\scal {x-y}\xi }\,
dyd\xi .
\end{equation}
For general $a\in \maclS _s'(\rr {2d})$, the
pseudo-differential operator $\op _t(a)$ is defined as the continuous
operator from $\maclS _s(\rr d)$ to $\maclS _s'(\rr d)$ with
distribution kernel
\begin{equation}\label{atkernel}
K_{a,t}(x,y)=(2\pi )^{-d/2}(\mascF _2^{-1}a)((1-t)x+ty,x-y).
\end{equation}
Here $\mascF _2F$ is the partial Fourier transform of $F(x,y)\in
\maclS _s'(\rr {2d})$ with respect to the $y$ variable. This
definition makes sense, since the mappings
\begin{equation}\label{homeoF2tmap}
\mascF _2\quad \text{and}\quad F(x,y)\mapsto F((1-t)x+ty,y-x)
\end{equation}
are homeomorphisms on $\maclS _s'(\rr {2d})$.
In particular, the map $a\mapsto K_{a,t}$ is a homeomorphism on
$\maclS _s'(\rr {2d})$.

\par

The standard (Kohn-Nirenberg) representation, $a(x,D)=\op (a)$, and
the Weyl quantization $\op ^w(a)$ of $a$ are obtained by choosing
$t=0$ and $t=1/2$, respectively, in \eqref{e0.5} and \eqref{atkernel}.

\par

\begin{rem}\label{BijKernelsOps}
By Fourier's inversion formula, \eqref{atkernel} and the kernel theorem
\cite[Theorem 2.2]{LozPerTask} for operators from
Gelfand-Shilov spaces to their duals,
it follows that the map $a\mapsto \op _t(a)$ is bijective from $\maclS _s'(\rr {2d})$
to the set of all linear and continuous operators from $\maclS _s(\rr d)$
to $\maclS _s'(\rr {2d})$.
\end{rem}

\par

By Remark \ref{BijKernelsOps}, it follows that for every $a_1\in \maclS _s '(\rr {2d})$
and $t_1,t_2\in \mathbf R$, there is a unique $a_2\in \maclS _s '(\rr {2d})$ such that
$\op _{t_1}(a_1) = \op _{t_2} (a_2)$. By Section 18.5 in \cite{Ho1},
the relation between $a_1$ and $a_2$
is given by
\begin{equation}
\label{calculitransform}
\op _{t_1}(a_1) = \op _{t_2}(a_2)
\quad \Longleftrightarrow \quad
a_2(x,\xi )=e^{i(t_1-t_2)\scal {D_x }{D_\xi}}a_1(x,\xi ).
\end{equation}

\par

We also recall that $\op _t(a)$ is a rank-one operator, i.{\,}e.
\begin{equation}\label{trankone}
\op _t(a)f=(2\pi )^{-d/2}(f,f_2)f_1, \qquad f\in \maclS _s(\rr d),
\end{equation}
for some $f_1,f_2\in \maclS _s '(\rr d)$,
if and only if $a$ is equal to the \emph{$t$-Wigner distribution}
\begin{equation}\label{wignertdef}
W_{f_1,f_2}^{t}(x,\xi ) \equiv \mascF (f_1(x+t\cdo
)\overline{f_2(x-(1-t)\cdo )} )(\xi ),
\end{equation}
of $f_1$ and $f_2$. If in addition $f_1,f_2\in L^2(\rr d)$, then $W_{f_1,f_2}^{t}$
takes the form
\begin{equation}\label{wignertdef2}
W_{f_1,f_2}^{t}(x,\xi ) = (2\pi )^{-d/2}\int _{\rr d} f_1(x+ty)
\overline{f_2(x-(1-t)y)}e^{-\scal y\xi} \, dy.
\end{equation}
(Cf. \cite{BoDoOl1}.) Since the Weyl case is of peculiar interests,
we also set $W_{f_1,f_2}=W_{f_1,f_2}^{t}$,
when $t=1/2$.

\par

\subsection{Schatten-von Neumann classes}

\par

Let $\maclB (V_1,V_2)$ denote the set of all linear and continuous
operators from the quasi-normed space $V_1$ to the quasi-normed
space $V_2$, and let $\nm \cdo{\maclB (V_1,V_2)}$ denote corresponding
quasi-norm. Let $\mascH _k$, $k=1,2,3$, be Hilbert spaces and
$T\in \maclB (\mascH _1, \mascH _2)$. Then the
\emph{singular value} of $T$ of order $j\ge 1$ is defined by
$$
\sigma _j(T) = \sigma _j(T ,\mascH _1, \mascH _2)
\equiv \inf \nm {T-T_0}{\maclB (\mascH _1, \mascH _2))},
$$
where the infimum is taken over all linear operators $T_0$ from
$\mascH _1$ to $\mascH _2$ of rank at most $j-1$.
The set $\mascI _p(\mascH _1, \mascH _2)$
of Schatten-von Neumann operators from
$\mascH _1$ to $\mascH _2$
of order $p\in (0,\infty ]$ is the set of all $T\in \maclB
(\mascH _1, \mascH _2)$ such that
\begin{equation}\label{SchattenNormBanach}
\nm T{\mascI _p(\mascH _1, \mascH _2)}
\equiv \nm {\{Ê\sigma _j(T)Ê\}Ê_{j\ge 1}}{\ell ^p}
\end{equation}
is finite.  We observe that $\mascI _p(\mascH _1, \mascH _2)$ is contained in
the set of compact operators from $\mascH _1$
to $\mascH _2$, when $p<\infty$.

\par

\par

We recall that if $p_0,p_1,p_2\in (0,\infty ]$, then
\begin{equation}\label{YoungSchatten}
\begin{gathered}
\nm {T_2\circ T_1}{\mascI _{p_0}(\mascH _1,\mascH _3)}
\le
\nm {T_1}{\mascI _{p_1}(\mascH _1,\mascH _2)}
\nm {T_2}{\mascI _{p_2}(\mascH _2,\mascH _3)}
\quad \text{when}
\\[1ex]
T_1\in \mascI _{p_1}(\mascH _1,\mascH _2),
\quad T_2\in \mascI _{p_2}(\mascH _2,\mascH _3),\quad 
\frac 1{p_1}+\frac 1{p_2}= \frac 1{p_0},
\end{gathered}
\end{equation}
and refer to \cite{Si,BS} for
more facts about Schatten-von Neumann classes.

\par

For convenience we set
$$
\mascI _p(\omega _1,\omega _2) \equiv
\mascI _p(\mascH _1, \mascH _2),
$$
when $\mascH _k=M^2_{(\omega _k)}(\rr d)$, for some $\omega _k\in
\mascP _E(\rr {2d})$, $k=1,2$. Moreover, if $t\in \mathbf R$ and
then $s_{t,p}(\omega _1,\omega _2)$ is the set of all $a\in \maclS _{1/2}'(\rr {2d})$
such that $\op _{t}(a)\in \mascI _p(\omega _1,\omega _2)$, and we set
$$
\nm a{s_{t,p}(\omega _1,\omega _2)}\equiv \nm {\op _{t}(a)}{\mascI
_p(\omega _1,\omega _2)}.
$$
We also set $s_p^w(\omega _1,\omega _2)=s_{t,p}(\omega _1,\omega _2)$
in the Weyl case, i.{\,}e. when $t=1/2$.
Moreover, if $\omega _1=\omega _2=1$, then we set
$s_{t,p}(\rr {2d})=s_{t,p}(\omega _1,\omega _2)$ and
$s_p^w(\rr {2d})=s_p^w(\omega _1,\omega _2)$.

\par

We recall that
$s_{t,p}(\omega _1,\omega _2)$
is a quasi-Banach space under the quasi-norm $a\mapsto
\nm a{s_{t,p}(\omega _1,\omega _2)}\equiv \nm {\op _t(a)}{\mascI _{p}
(\omega _1,\omega _2)}$. Furthermore, if in addition $p\ge 1$, then
$s_{t,p}(\omega _1,\omega _2)$ is a Banach space.

\par

By Remark \ref{BijKernelsOps} it follows that the map $a\mapsto \op _t(a)$
from $s_{t,p}(\omega _1,\omega _2)$ to $\mascI _p(\omega _1, \omega _2)$
is bijective and norm preserving.

\par

\subsection{Symplectic vector spaces and H{\"o}rmander symbol
classes}\label{subsec1.8}

\par

A real vector space $W$
of dimension $2d$ is called \emph{symplectic} if there is a non-degenerate
and anti-symmetric bilinear form $\sigma$ (the symplectic form). By choosing symplectic
coordinates $e_1,\dots ,e_d,\ep _1,\dots ,\ep _d$ in $W$, it follows that
$$
\sigma (X,Y) = \scal y\xi -\scal x\eta ,
$$
with
$$
X=(x,\xi )= \sum _{k=1}^d(x_je_j+\xi _j\ep _j)\in W,
\quad
Y=(y,\eta )= \sum _{k=1}^d(y_je_j+\eta _j\ep _j)\in W,
$$
which allows us to identify $W$ with the phase space $T^*V$ for some vector
space $V$ of dimension $d$, or by $T^*\rr d\simeq \rr {2d}$.

\par

The symplectic Fourier transform $\mascF _\sigma$ is the linear and
continuous map on $\maclS _{1/2}'(W)$, given by
$$
(\mascF _\sigma a)(X) = \pi ^{-d}\int _W a(Y)e^{2i\sigma (X,Y)}\, dY
$$
when $a\in \mascS (W)$. If $X=(x,\xi )\in T^*\rr d=W$, then it follows that
$(\mascF _\sigma a)(X) = 2^d\widehat a(-2\xi ,2x)$.

\par

Next we recall some notions on H{\"o}rmander symbol classes, $S(m,g)$,
parameterized by the Riemannian metric $g$ and the weight function $m$
on the $2d$ dimensional symplectic vector space $W$ (see e.{\,}g.
\cite{BoCh,BuTo,Ho0,Ho1,Le,Toft4}).
The reader who is not interested of the Schatten-von Neumann
results in Section \ref{sec4} of pseudo-differential operators
with symbols in $S(m,g)$ may pass to the next section.

\par

The H{\"o}rmander class $S(m,g)$ consists of all $a\in C^\infty (W)$ such that
$$
\nm a{m,N}^g \equiv \sum _{k=0}^N \sup _{X\in W}(|a|_k^g(X)/m(X)),
\quad \text{where}\quad
|a|_k^g(X) = \sup |a^{(k)}(X;Y_1,\dots ,Y_k)|.
$$
Here the latter supremum is taken over all $Y_1,\dots ,Y_k\in W$ such that
$g_X(Y_j)\le 1$, $j=1,\dots ,k$, and $|a|_0^g(X)$ is interpreted as $|a(X)|$.

\par

We need to add some conditions on $m$ and $g$. The metric $g$ is called
\emph{slowly varying} if there are positive constants $c$ and $C$ such that
\begin{equation}\label{Eq:SlowlyVar}
C^{-1}g_X \le g_Y \le Cg_X,\quad \text{when}\quad  X,Y\in W
\end{equation}
satisfy $g_X(X-Y)\le c$, and $m$ is called $g$-continuous when \eqref{Eq:SlowlyVar}
holds with $m(X)$ and $m(Y)$ in place of $g_X$ and $g_Y$,
respectively, provided $g_X(X-Y)\le c$.

\par

For the Riemannian metric $g$ on $W$, the \emph{dual} metric $g^\sigma$ with respect to the
symplectic form $\sigma$, and the \emph{Planck's function} $h_g$ are defined by
$$
g_X^\sigma (Z) \equiv \sup _{g_X(Y)\le 1}\sigma (Y,Z)^2
\qquad \text{and}\qquad
h_g(X) \equiv \sup _{g_X^\sigma (Y)\le 1}g_X(Y)^{1/2}.
$$
Moreover, if $g$ is slowly varying and $m$ is $g$-continuous, then $g$ is called
$\sigma$-temperate if there are positive constants $C$ and $N$ such that
\begin{equation}\label{Eq:Sigmatemp}
g_Y(Z)\le Cg_X(Z) (1+g_Y(X-Y))^N,\qquad X,Y,Z\in W,
\end{equation}
and $m$ is called $(\sigma ,g)$-temperate if it is $g$-continuous and
\eqref{Eq:Sigmatemp} holds with $m(X)$ and $m(Y)$ in place of
$g_X(Z)$ and $g_Y(Z)$, respectively.

\par

\begin{defn}
Let $g$ be a Riemannian metric on W. Then $g$ is called \emph{feasible} if it is slowly
varying and $h_g\le 1$ everywhere. Furthermore, $g$ is called \emph{strongly feasible}
if it is feasible and $\sigma$-temperate.
\end{defn}

\par

We remark that the H{\"o}rmander class $S^r_{\rho ,\delta}$ in \cite{Ho1}, the $\SG$-class
in \cite{Co,Pa}, the Shubin classes in \cite[Definition 23.1]{Sh} and other well-known
families of symbol classes are given by $S(m.g)$ for suitable choices of
strongly feasible metrics $g$ and $(\sigma ,g)$-temperate weights $m$.

\par

\section{Estimates for matrices}\label{sec2}

\par

In this section we deduce continuity and Schatten-properties for matrices
in the classes $\mathbb U^{p ,q}(\omega ,\Lambda )$.
In the first part we achieve convenient factorization
results for matrices in the case when $p=q$ (cf. Theorem \ref{factorizationprop}).
Thereafter we establish the continuity properties (cf. Theorem \ref{matrixcont2}).
In the last part of the section we combine these factorizations and continuity
results to establish Schatten properties for matrix operators
(cf. Theorem \ref{MatrixSchatten}).

\par

Theorem \ref{factorizationprop} below allows factorizations of matrices in
$\mathbb U^{p}(\omega ,\Lambda )$ in suitable ways, when $\Lambda$
is given by \eqref{LambdaDef}. Here the involved weights
should fulfill
\begin{alignat}{2}
\omega _1(j,j)\omega _2(j,k)&\le \omega _0(j,k),&
\qquad j,k &\in \Lambda \label{weightcond1}
\intertext{or}
\omega _1(j,k)\omega _2(k,k)&\le \omega _0(j,k),&
\qquad j,k &\in \Lambda \label{weightcond2}
\end{alignat}
and the involved Lebesgue exponents should satisfy the H{\"o}lder
condition
\begin{equation}\label{holdercond}
\frac 1{p_0} \le \frac 1{p_1} + \frac 1{p_2}, 
\end{equation}


\par

\begin{thm}\label{factorizationprop}
Let $\Lambda$ be as in \eqref{LambdaDef}, $p _l\in (0, \infty ]$
be such that \eqref{holdercond} hold, $\omega _l$, $l=0,1,2$, be
weights on $\rr {2d}$ , and let $A_0\in
\mathbb U^{p_0}(\omega _0,\Lambda )$. Then the following is true:
\begin{enumerate}
\item if \eqref{weightcond1} holds, then
$A_0=A_1\cdot A_2$ for some $A_l\in \mathbb U^{p_l}(\omega _l,\Lambda )$,
$l=1,2$. Furthermore,  $A_1$ can be chosen as a diagonal matrix;

\par

\item if \eqref{weightcond2} holds, then
$A_0=A_1\cdot A_2$ for some $A_l\in \mathbb U^{p_l}(\omega _l,\Lambda )$,
$l=1,2$. Furthermore,  $A_2$ can be chosen as a diagonal matrix.
\end{enumerate}

\par

Moreover, the matrices  in {\rm{(1)}} and {\rm{(2)}} can be chosen
such that
\begin{equation}\label{multcont}
\nm {A_1}{\mathbb U^{p_1}(\omega _1,\Lambda )}
\nm {A_2}{\mathbb U^{p_2}(\omega _2,\Lambda )} \le \nm
{A_0}{\mathbb U^{p_0}(\omega _0,\Lambda )}.
\end{equation}
\end{thm}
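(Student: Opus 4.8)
The plan is to prove part (1) by an explicit construction and then obtain part (2) by a transposition/symmetry argument. Recall that by \eqref{haomegadef} the quasi-norm $\nm {A_0}{\mathbb U^{p_0}(\omega _0,\Lambda )}$ is built from the reindexed matrix $H_{A_0,\omega _0}(j,k)=a_0(j,j-k)\omega _0(j,j-k)$, taking an $\ell ^{p_0}$ norm in $j$ (for fixed shift $k$) and then an $\ell ^{p_0}$ norm in $k$. Since $A_1$ is to be diagonal, write $A_1=\operatorname{diag}(d(j))_{j\in \Lambda}$; then $(A_1A_2)(j,k)=d(j)\,a_2(j,k)$, so prescribing the product $A_0=A_1A_2$ forces $a_2(j,k)=a_0(j,k)/d(j)$ on the support. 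The natural choice is to let $d(j)$ absorb exactly the diagonal part of the weight, namely $d(j)\asymp \omega _1(j,j)$, and then to check that the two factors land in the asserted spaces with the norm bound \eqref{multcont}.

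**The key estimates.** First I would verify that the diagonal $A_1$ lies in $\mathbb U^{p_1}(\omega _1,\Lambda )$: for a diagonal matrix only the shift $k=0$ contributes to $H_{A_1,\omega _1}$, so $\nm {A_1}{\mathbb U^{p_1}(\omega _1,\Lambda )}=\nm {(d(j)\omega _1(j,j))_j}{\ell ^{p_1}}$, and with $d(j)$ chosen as above this is controlled. The substantive step is the bound on $A_2$. Using $a_2(j,k)=a_0(j,k)/d(j)$ and the weight inequality \eqref{weightcond1}, I would estimate
\begin{equation*}
|a_2(j,k)|\omega _2(j,k)
= \frac{|a_0(j,k)|\,\omega _2(j,k)}{d(j)}
\lesssim \frac{|a_0(j,k)|\,\omega _2(j,k)\,\omega _1(j,j)}{\omega _1(j,j)}
\le |a_0(j,k)|\,\omega _0(j,k),
\end{equation*}
so that pointwise $H_{A_2,\omega _2}(j,k)\lesssim H_{A_0,\omega _0}(j,k)$. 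The remaining task is purely at the level of mixed sequence quasi-norms: I must pass from the $\ell ^{p_0}$–$\ell ^{p_0}$ structure controlling $A_0$ to the $\ell ^{p_1}$–$\ell ^{p_1}$ and $\ell ^{p_2}$–$\ell ^{p_2}$ structures of the two factors, using only the Hölder relation \eqref{holdercond}. Here the diagonal choice is what decouples the indices: the $j$-summation for $A_2$ inherits the $k=0$ normalization from $d$, and Hölder's inequality in the sequence spaces, applied in the shift variable, yields $\nm {A_2}{\mathbb U^{p_2}(\omega _2,\Lambda )}\lesssim \nm {A_0}{\mathbb U^{p_0}(\omega _0,\Lambda )}^{1/p_0\cdot p_2}$-type control, arranged so that the product of the two factor quasi-norms telescopes to $\nm {A_0}{\mathbb U^{p_0}(\omega _0,\Lambda )}$.

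**Sharpening to the stated bound and obtaining part (2).** The inequality \eqref{multcont} asks for the product of quasi-norms to be bounded by $\nm {A_0}{\mathbb U^{p_0}(\omega _0,\Lambda )}$ with constant exactly $1$, not merely $\lesssim$; to get this I would calibrate the scalar factor $d(j)$ precisely rather than up to constants, splitting the weight contribution between the two factors so that the Hölder exponents $1/p_1+1/p_2\ge 1/p_0$ make the split-and-recombine step an equality-or-better. Once part (1) is established, part (2) follows by symmetry: the roles of row and column indices, and of \eqref{weightcond1} versus \eqref{weightcond2}, are interchanged by the map $A\mapsto A^{t}$ together with the relabeling $(j,k)\mapsto(k,j)$; applying part (1) to the transpose and transposing back puts the diagonal factor on the right, as required.

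**Expected obstacle.** The main difficulty is not the algebra of the factorization but the behavior of the mixed quasi-norms when $p_l<1$. In that regime $\nm \cdo {\ell ^{p_l}}$ is only a quasi-norm, so Hölder's inequality and the recombination of the two factor norms must be handled carefully; the reindexing in \eqref{haomegadef} (shifting $k\mapsto j-k$) means the summation variables for the two factors are coupled, and I expect the crux to be verifying that the diagonal choice genuinely separates them so that \eqref{holdercond} can be invoked shift-by-shift. Getting the clean constant $1$ in \eqref{multcont}, rather than a dimension- or $p$-dependent constant, is the delicate point and will drive the exact normalization of $d(j)$.
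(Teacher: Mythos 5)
There is a genuine gap, and it sits exactly where you deferred the details: the choice of the diagonal entries $d(j)$. With $d(j)\asymp \omega _1(j,j)$ the diagonal factor $A_1$ does \emph{not} belong to $\mathbb U^{p_1}(\omega _1,\Lambda )$ in general. Indeed, as you correctly note, for a diagonal matrix only the shift $k=0$ contributes, so
\begin{equation*}
\nm {A_1}{\mathbb U^{p_1}(\omega _1,\Lambda )}
=\Big (\sum _{j\in \Lambda}\big (d(j)\,\omega _1(j,j)\big )^{p_1}\Big )^{1/p_1}
\asymp \Big (\sum _{j\in \Lambda}\omega _1(j,j)^{2p_1}\Big )^{1/p_1},
\end{equation*}
which is infinite already for $\omega _1\equiv 1$, since $\Lambda$ is an infinite lattice. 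The same objection applies to any choice of $d(j)$ depending only on the weights. The diagonal entries must depend on $A_0$ itself: the paper takes
\begin{equation*}
d(j)=\big (\omega _1(j,j)\big )^{-1}\Big (\sum _{m}|a_0(j,m)\omega _0(j,m)|^{p_0}\Big )^{1/p_1},
\end{equation*}
i.{\,}e. the diagonal carries the $j$-th row mass of $A_0$ raised to the power $p_0/p_1$ (the discrete analogue of factoring $f=f^{p_0/p_1}\cdot f^{p_0/p_2}$ in modulus). With this choice one gets the exact identity $\nm {A_1}{\mathbb U^{p_1}(\omega _1,\Lambda )}=\nm {A_0}{\mathbb U^{p_0}(\omega _0,\Lambda )}^{p_0/p_1}$, while the elementary lower bound $d(j)\omega _1(j,j)\ge |a_0(j,k)\omega _0(j,k)|^{p_0/p_1}$ together with \eqref{weightcond1} gives $|a_2(j,k)|\omega _2(j,k)\le |a_0(j,k)\omega _0(j,k)|^{p_0/p_2}$, hence $\nm {A_2}{\mathbb U^{p_2}(\omega _2,\Lambda )}\le \nm {A_0}{\mathbb U^{p_0}(\omega _0,\Lambda )}^{p_0/p_2}$; the product is then exactly $\nm {A_0}{\mathbb U^{p_0}(\omega _0,\Lambda )}$ since one may assume $p_0/p_1+p_0/p_2=1$. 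Your pointwise bound $H_{A_2,\omega _2}\lesssim H_{A_0,\omega _0}$ only yields $\nm {A_2}{\mathbb U^{p_2}}\lesssim \nm {A_0}{\mathbb U^{p_0}}$ to the first power (via $\ell ^{p_0}\subseteq \ell ^{p_2}$), which cannot telescope with a weight-only diagonal to give \eqref{multcont}.

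A secondary point: the ``expected obstacle'' you flag is not actually present. Since here $p=q=p_l$, the quasi-norm $\nm \cdo {\mathbb U^{p_l}(\omega _l,\Lambda )}$ is, after the reindexing $k\mapsto j-k$, just the full weighted $\ell ^{p_l}$ quasi-norm of all the matrix entries; there is no genuine mixed-norm coupling, no shift-by-shift H{\"o}lder argument is needed, and the case $p_l<1$ causes no extra difficulty because the whole computation reduces to power-sum identities. Your reduction of part (2) to part (1) by transposition, and the remark that one may assume equality in \eqref{holdercond}, do agree with the paper.
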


\par

\begin{proof}
It is no restrictions to assume that equality is attained in \eqref{holdercond},
and by transposition it also suffices to prove (1).

\par

We only prove the result for $p_0<\infty$. The small modifications to the
case when $p_0=\infty$ are left for the reader. Let $a(j,k)$ be the matrix
elements for $A_0$, and let $A_1 =(b(j,k))$ and $A_2=(c(j,k))$
be the matrices such that
$$
b(j,k) =
\begin{cases}
\big (\omega _1(j,j)\big )^{-1}
\displaystyle{\left (\sum _{m}|a(j,m)\omega _0(j,m)|^{p_0}\right
)^{1/{p_1}}},&\quad j=k
\\[3ex]
0,&\quad   j\neq k
\end{cases}
$$
and $c(j,k)=a(j,k)/b(j,j)$ when $b(j,j)\neq 0$, and $c(j,k)=0$ otherwise.

\par

Since
$$
b(j,j)\ge (\omega _1(j,j))^{-1}|a(j,k)\omega _0(j,k)|^{p_0/p_1},
\quad \text{and}\quad
\frac 1{p_0} - \frac 1{p_1} = \frac 1{p_2},
$$
\eqref{weightcond1} gives
\begin{multline*}
|c(j,k)\omega _2(j,k)|\le |a(j,k)|^{p_0/p_2}\omega _1(j,j)\omega _2(j,k)
/\omega _0(j,k)^{p_0/p_1}
\\[1ex]
\le |a(j,k)|^{p_0/p_2}\omega _0(j,k)^{p_0/p_2}.
\end{multline*}
This in turn gives
\begin{multline*}
\nm {A_1}{\mathbb U^{p_1}(\omega _1,\Lambda )} = \left ( \sum _{j,k}|b(j,k)
\omega _1(j,k)|^{p_1}\right )^{1/{p_1}}
\\[1ex]
=
\left (   \left (  \sum _j  \left (   \sum _{m}   |a(j,m)\omega _0(j,m)|^{p_0}
\right )^{1/p_1}  \right )^{p_1}  \right )^{1/p_1} =
\nm {A_0}{\mathbb U^{p_0}(\omega _0,\Lambda )}^{p_0/p_1},
\end{multline*}
and
\begin{multline*}
\nm {A_2}{\mathbb U^{p_2}(\omega _2,\Lambda )} =
\left (\sum _{j,k} |c(j,k)\omega _2(j,k)| ^{p_2}\right )^{1/{p_2}}
\\[1ex]
\le 
\left ( \sum _{j,k} |a(j,k)\omega _0(j,k)|^{p_0}\right )^{1/{p_2}} =
\nm {A_0}{\mathbb U^{p_0}(\omega _0,\Lambda )}^{p_0/p_2}.
\end{multline*}
Hence $A_l\in \mathbb U^{p_l}(\omega _l,\Lambda )$, $l=1,2$. Since $A_0=A_1\cdot
A_2$ and $p_0/p_1+p_0/p_2=1$, the result follows.
\end{proof}

\par

If the weights $\omega _l$, $l=0,1,2$, fulfill
\begin{equation}\label{weightcond3}
\omega _1(j,m)\omega _2(m,k)\le \omega _0(j,k),\qquad
\text{for every}\ j,k,m\in \Lambda ,
\end{equation}
then it is evident that both \eqref{weightcond1} and \eqref{weightcond2}
are fulfilled. Hence the following result is a special case of Theorem
\ref{factorizationprop}.

\par

\begin{prop}\label{factorizationprop2}
Let $\Lambda$ be as in \eqref{LambdaDef}, $p _l\in (0, \infty ]$
and let $\omega _l$,
$l=0,1,2$, be weights on $\rr {2d}$ such that \eqref{holdercond} and
\eqref{weightcond3} hold,
and let $A_0\in \mathbb U^{p_0}(\omega _0,\Lambda )$. Then
$A_0=A_1\cdot A_2$ for some $A_l\in \mathbb U^{p_l}(\omega _l,\Lambda )$,
$l=1,2$. Moreover, the matrices $A_1$ and $A_2$
can be chosen such that
\eqref{multcont} holds.
\end{prop}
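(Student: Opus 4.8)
The plan is to observe that Proposition~\ref{factorizationprop2} is an immediate corollary of Theorem~\ref{factorizationprop}, and the entire content of the proof is to verify that the single hypothesis \eqref{weightcond3} forces both of the alternative hypotheses \eqref{weightcond1} and \eqref{weightcond2} of the theorem. Once that implication is established, the conclusion (existence of the factorization $A_0 = A_1\cdot A_2$ with $A_l\in\mathbb U^{p_l}(\omega_l,\Lambda)$ satisfying the norm estimate \eqref{multcont}) is simply quoted from Theorem~\ref{factorizationprop}.

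So first I would recover \eqref{weightcond1} from \eqref{weightcond3}. The condition \eqref{weightcond3} reads $\omega_1(j,m)\omega_2(m,k)\le\omega_0(j,k)$ for all $j,k,m\in\Lambda$. Specializing the free middle index $m$ to $m=j$ gives $\omega_1(j,j)\omega_2(j,k)\le\omega_0(j,k)$, which is precisely \eqref{weightcond1}. Symmetrically, specializing $m=k$ in \eqref{weightcond3} gives $\omega_1(j,k)\omega_2(k,k)\le\omega_0(j,k)$, which is precisely \eqref{weightcond2}. Thus both weight conditions of the theorem hold simultaneously.

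With both \eqref{weightcond1} and \eqref{weightcond2} verified, the hypotheses of Theorem~\ref{factorizationprop} are met (the H\"older condition \eqref{holdercond} and the membership $A_0\in\mathbb U^{p_0}(\omega_0,\Lambda)$ are assumed outright in the proposition). Applying either part of the theorem yields the desired factorization together with \eqref{multcont}, and the proof is complete.

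I do not anticipate any genuine obstacle here: the argument is a one-line specialization of the quantified variable $m$ in \eqref{weightcond3}. The only point meriting a word of care is that the implication \eqref{weightcond3}$\Rightarrow$\eqref{weightcond1},\eqref{weightcond2} relies on the weights being defined and finite on the full diagonal, so that the substitutions $m=j$ and $m=k$ are legitimate; since $\omega_l$ map $\Lambda^2$ into $\mathbf R_+$, this is automatic.
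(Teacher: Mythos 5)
Your proposal is correct and matches the paper's own argument: the paper likewise notes that \eqref{weightcond3} evidently implies both \eqref{weightcond1} and \eqref{weightcond2} (via the specializations $m=j$ and $m=k$, which you make explicit) and then invokes Theorem \ref{factorizationprop}.
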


\medspace

Next we deduce continuity results for matrix operators. 
We recall that if $A=(a(j,k))_{j,k\in \Lambda}$ is a matrix, then $Af$ is uniquely
defined as an element in $\ell (\Lambda )$ when $f\in \ell _0(\Lambda )$, i.{\,}e.
\begin{alignat}{2}
A\, &:\, &  \ell _0(\Lambda ) &\mapsto \ell (\Lambda ). \label{Abmap1}
\intertext{Furthermore, if in addition $A$ belongs to $\mathbb U_0(\Lambda )$,
then $Af$ is uniquely defined as an element in $\ell _0(\Lambda )$ when $f\in l(\Lambda )$,
i.{\,}e.}
A\, &:\, & \ell (\Lambda ) &\mapsto \ell _0(\Lambda )\phantom ,\qquad \text{when}\quad
A\in \mathbb U_0(\Lambda ). \label{Abmap2}
\end{alignat}

\par

For $p\in [1,\infty ]$, its conjugate exponent $p'\in [1,\infty ]$ is usually
defined by $1/p+1/p'=1$. For $p$ belonging to the larger interval $(0,\infty ]$
it is convenient to extend the definition of $p'$ as
$$
p'=
\begin{cases}
\ \ 1, & p=\infty
\\[1ex]
\displaystyle{\frac p{p-1}}, & 1<p<\infty
\\[1ex]
\ \ \infty , & 0<p\le 1 .
\end{cases}
$$

\par

The next theorem is the main result concerning the continuity
for matrix operators.

\par

\begin{thm}\label{matrixcont2}
Let $\sigma \in S_d$, $\theta \in \mathbf R^d_*$, $\Lambda =T_\theta \zz d$,
$\omega _l$ be weights on $\Lambda$,
$l=1,2$, and $\omega _0$ be a weight on $\Lambda \times \Lambda$ such that
\eqref{weightineq1} holds. Also let $\mabfp _1,\mabfp _2\in (0,\infty]^n$, and
$p,q\in (0,\infty]$
be such that
\begin{equation}\label{pqconditions}
\frac 1{\mabfp _2}-\frac 1{\mabfp _1} = \frac 1{p}+\min \left ( 0,\frac 1{q}-1\right ) ,
\quad 
q \le \min (\mabfp _2)\le
\max (\mabfp _2) \le p,
\end{equation}
and let $A\in \mathbb U^{p,q}(\omega _0,\Lambda )$.
Then $A$ from $\ell _0(\Lambda )$ to $\ell (\Lambda )$ is uniquely extendable to a continuous
map from $\ell ^{\mabfp _1} _{\sigma ,(\omega _1)}(\Lambda )$ to $\ell ^{\mabfp _2}
_{\sigma ,(\omega _2)}(\Lambda )$, and
\begin{equation}\label{Anormest}
\nm A{\maclB (\ell ^{\mabfp _1}_{\sigma ,(\omega _1)} (\Lambda )
, \ell ^{\mabfp _2}_{\sigma ,(\omega _2)}(\Lambda ))}\le
\nm A{\mathbb U^{p,q}(\omega _0,\Lambda )}.
\end{equation}
\end{thm}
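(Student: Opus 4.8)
The plan is to prove the estimate \eqref{Anormest} first for finitely supported $f\in \ell _0(\Lambda )$ and then to pass to a continuous extension. Writing $k'=j-k$, the matrix action takes the convolution-like form $(Af)(j)=\sum _{k\in \Lambda}H_A(j,k)\,f(j-k)$, where $H_A(j,k)=a(j,j-k)$ is the quantity entering \eqref{haomegadef}; thus each fixed shift $k$ contributes the operator $M_k\colon f\mapsto H_A(\cdo ,k)\,f(\cdo -k)$, and $A=\sum _kM_k$. Before anything else I would use \eqref{weightineq1} to reduce to the unweighted situation: that condition bounds $\omega _2$ at the output point by the product of $\omega _0$ along the shifted diagonal and $\omega _1$ at the input point, so the weights $\omega _1,\omega _2$ can be absorbed into $f$ and into $Af$, leaving the estimate with $h_{A,p,\omega _0}$ in place of $h_{A,p}$ and with $\nm {h_{A,p,\omega _0}}{\ell ^q}=\nm A{\mathbb U^{p,q}(\omega _0,\Lambda )}$. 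From here I treat the case $\omega _0=\omega _1=\omega _2=1$.

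The core is a single per-shift inequality. Introduce the intermediate exponent vector $\mabfr$ by $1/\mabfr =(1/p,\dots ,1/p)+1/\mabfp _1$ (coordinatewise). Since a mixed quasi-norm with all exponents equal to $p$ is just the isotropic norm $\nm \cdo {\ell ^p}$, H{\"o}lder's inequality in each coordinate together with the shift-invariance of $\ell ^{\mabfp _1}_\sigma$ gives
$$
\nm {M_kf}{\ell ^{\mabfr}_\sigma (\Lambda )}\le \nm {H_A(\cdo ,k)}{\ell ^p(\Lambda )}\,\nm {f(\cdo -k)}{\ell ^{\mabfp _1}_\sigma }=h_{A,p}(k)\,\nm f{\ell ^{\mabfp _1}_\sigma (\Lambda )}.
$$
By \eqref{pqconditions} one has $1/\mabfp _2-1/\mabfr =(1/q-1)(1,\dots ,1)$ when $q\ge 1$, and $\mabfp _2=\mabfr$ when $q\le 1$, which splits the argument into two regimes.

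When $q\le 1$ there is no gain of integrability to extract, so I would simply sum the $M_k$. With $\rho =\min (1,\min (\mabfp _2))$, the $\rho$-subadditivity of $\nm \cdo {\ell ^{\mabfp _2}_\sigma }$ yields $\nm {Af}{\ell ^{\mabfp _2}_\sigma }^\rho \le \sum _k\nm {M_kf}{\ell ^{\mabfp _2}_\sigma }^\rho$; since $q\le \min (\mabfp _2)$ and $q\le 1$ give $q\le \rho$, we get $\nm {h_{A,p}}{\ell ^\rho }\le \nm {h_{A,p}}{\ell ^q}$ and \eqref{Anormest} follows. The remaining regime $q>1$ is where the genuine Young-type gain $1/q-1<0$ must be produced, and this I expect to be the main obstacle: one cannot argue shift-by-shift, because summing uniformly $\ell ^{\mabfr}$-bounded pieces never enlarges the exponent, so the shift structure in $f(\cdo -k)$ must be kept. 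Here I would prove a mixed-norm Young inequality $\nm {Af}{\ell ^{\mabfp _2}_\sigma }\le \nm {h_{A,p}}{\ell ^q}\,\nm f{\ell ^{\mabfp _1}_\sigma }$ directly, by running through the coordinates in the order prescribed by $\sigma$ and applying, in each coordinate, the one-dimensional convolution inequality on $\Lambda$ together with Minkowski's inequality to interchange the $\ell ^q_k$ summation with the remaining iterated norms. The two inequalities in \eqref{pqconditions}, namely $q\le \min (\mabfp _2)$ and $\max (\mabfp _2)\le p$, are exactly what makes each Minkowski interchange legitimate (outer exponent $\ge$ inner exponent) and what keeps the one-dimensional convolution exponents in the admissible Young range; the subtlety is to carry this through when some components of $\mabfr$ or $\mabfp _2$ lie below $1$, so that quasi-norm, rather than norm, bookkeeping is in force.

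Finally, having \eqref{Anormest} on $\ell _0(\Lambda )$, I would obtain the extension. When $\max (\mabfp _1)<\infty$, $\ell _0(\Lambda )$ is dense in $\ell ^{\mabfp _1}_{\sigma ,(\omega _1)}(\Lambda )$ (as recorded after the definition of the mixed spaces), so the bounded operator extends uniquely by continuity. When $\max (\mabfp _1)=\infty$ the series $\sum _ka(\cdo ,k)f(k)$ still converges entrywise, and the estimate persists by a Fatou-type passage to the limit from finite truncations, giving the canonical continuous extension with the same bound.
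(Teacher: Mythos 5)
Your treatment of the regime $q\le 1$ is correct and is genuinely different from (and arguably cleaner than) the paper's argument. The paper proves this case by dominating $|g(j)\omega _2(j)|$ by a convolution, invoking a convolution estimate from \cite{Toft12} when $p=\infty$, and otherwise running an induction over the $d$ coordinates with H{\"o}lder's and Minkowski's inequalities. Your route --- decomposing $A=\sum _kM_k$ into shift operators, applying the mixed-norm H{\"o}lder inequality to get $\nm {M_kf}{\ell ^{\mabfp _2}_{\sigma}}\le h_{A,p}(k)\nm f{\ell ^{\mabfp _1}_{\sigma}}$ (using that $\mabfp _2=\mabfr$ when $q\le 1$), and then summing with the $\rho$-subadditivity of the quasi-norm together with $q\le \rho =\min (1,\min (\mabfp _2))$ --- avoids the induction entirely and handles $p<\infty$ and $p=\infty$ uniformly. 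The weight reduction via \eqref{weightineq1} and the extension argument (density when $\max (\mabfp _1)<\infty$, which in fact always holds when $q>1$; monotone convergence otherwise) match the paper's.

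The gap is in the regime $q>1$ with $p<\infty$. You correctly observe that the per-shift decomposition cannot produce the gain $1/q-1<0$, but the replacement you propose --- ``the one-dimensional convolution inequality on $\Lambda$'' applied coordinatewise with Minkowski interchanges --- does not exist in the form you need: for $p<\infty$ the kernel $H_A(j,k)$ genuinely depends on $j$, so $Af$ is not a convolution, and the one-dimensional inequality
$$
\Bigl \Vert \sum _kH_A(\cdo ,k)f(\cdo -k)\Bigr \Vert _{\ell ^{p_2}}\le \nm {h_{A,p}}{\ell ^q}\nm f{\ell ^{p_1}},\qquad \frac 1{p_2}=\frac 1{p_1}+\frac 1p+\frac 1q-1,\ q\le p_2\le p,
$$
is itself the nontrivial content of the theorem in dimension one; it is not Young's inequality and you never prove it. (Dominating $|H_A(j,k)|$ by $h_{A,\infty}(k)$ to recover a convolution only yields the $p=\infty$ case.) The paper closes exactly this case by multilinear interpolation between the endpoint $(p,q)=(1,1)$, where $\mabfp _1=\infty$ and $\mabfp _2=1$ and the bound is immediate, and the family $p=\infty$, $q\in [1,\infty ]$, where the convolution domination and Young's inequality do apply, citing Theorems 4.4.1 and 5.6.3 in \cite{BeLo}. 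To complete your proof you would need either to supply such an interpolation step or to give a direct proof of the displayed two-variable-kernel inequality (e.g.\ by a three-exponent H{\"o}lder/Riesz argument using $q\le p_2\le p$), and only then iterate it over coordinates.
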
 

\par

We note that \eqref{Anormest} is the same as
\begin{equation}\label{Anormest2}
\nm {Af}{\ell ^{\mabfp _2}_{\sigma ,(\omega _2)}(\Lambda )}\le \nm
A{\mathbb U^{p,q}(\omega _0,\Lambda )}
\nm f{\ell ^{\mabfp _1}_{\sigma ,(\omega _1)} (\Lambda )},\quad
f\in \ell ^{\mabfp _1}_{\sigma ,(\omega _1)} (\Lambda ).
\end{equation}

\par

\begin{proof}
By permutation of the Lebesgue exponents, we reduce ourself to the case when
$\sigma$ is the identity map. We consider the cases $q\le 1$ and $q\ge 1$ separately.
Let $f\in \ell ^{\mabfp _1}_{(\omega _1)}$,
$h=h_{A,\infty ,\omega}$ be the same as in Definition \ref{matrixset1},
with $\omega =\omega _0$, and set
\begin{equation*}
c(k) = |f(k)\omega _1(k)|,
\quad
a_0 (j,k) = |a(j,j-k)\omega _0(j,j-k)|
\quad \text{and}\quad
g = Af.
\end{equation*}

\par

First we consider the case when $p=\infty$, $q\le 1$, and in addition
$A\in \mathbb U_0(\Lambda )$.
Then $\mabfp _1=\mabfp _2$, and we get
\begin{multline}\label{MatrixComp1}
|g(j)\omega _2(j)|\le \sum _k  |a(j,k)\omega _0(j,k)| \, c(k) 
\\[1ex]
= \sum _k  a_0(j,k) \, c(j-k) 
\\[1ex]
\le \sum _k h(k) c(j-k) =(h*c)(j).
\end{multline}
Hence, Corollary 2.2 in \cite{Toft12} gives
\begin{equation}\label{MatrixComp2}
\nm {Af}{\ell ^{\mabfp _2}_{(\omega _2)}} = \nm {g\cdot \omega _2}
{\ell ^{\mabfp _2}}\le \nm h{\ell ^q}\nm c{\ell ^{\mabfp _1}}
= \nm A{\mathbb U^{\infty ,q}(\omega _0,\Lambda )}
\nm f{\ell ^{\mabfp _1}_{(\omega _1)}},
\end{equation}
and the result follows in this case.

\par

For general $A\in \mathbb U^{\infty ,q}(\omega _0,\Lambda )$
we decompose $A$ and $f$ into
$$
A=A_1-A_2 +i(A_3-A_4)
\quad \text{and}\quad 
f=f_1-f_2 +i(f_3-f_4),
$$
where $A_j$ and $f_k$ only have non-negative entries, chosen
as small as possible. By Beppo Levi's theorem and the estimates above
it follows that $A_jf_k$ is uniquely defined as an element in
$\ell ^{\mabfp _1}_{(\omega _1)}$. It also follows from these estimates
\eqref{Anormest} holds, and we have proved the result in the case $p=\infty$
and $q\le 1$.

\par

The case when $q\le 1$, $p<\infty$ and $A\in \mathbb U_0(\Lambda )$
is obtained by induction. Let
\begin{alignat*}{2}
G_0(j) &\equiv |g(j)\omega _2(j)|, &
\quad
b_0(j,k)&\equiv a_0(j,k) = |a(j,j-k)\omega _0(j,j-k)|,
\\[1ex]
c_0(j) &\equiv c(j), & \quad
\Lambda _0 &\equiv \{ 0\}
\quad \text{and}\quad
\Lambda _0^* \equiv \Lambda = \theta _1\mathbf Z\times \cdots \times \theta _d\mathbf Z. 
\end{alignat*}
Also let
\begin{alignat*}{2}
\mabfp _{l,m} &\equiv  (p_{l,1},\dots ,p_{l,m}) &
\quad \text{when}\quad
\mabfp _{l} &= (p_{l,1},\dots ,p_{l,d}),\ l=1,2,
\\[1ex]
\Lambda _m &\equiv \theta _1\mathbf Z \times \cdots \times \theta _m\mathbf Z, &
\quad
\Lambda _m^* &\equiv \theta _{m+1}\mathbf Z \times \cdots \times \theta _d\mathbf Z,
\\[1ex]
\mabfj _m &= (j_{m+1},\dots ,j_d)\in \Lambda _m^* &
\quad \text{and}\quad
\mabfk _m &= (k_{m+1},\dots ,k_d)\in \Lambda _m^*
\intertext{when}
j&=(j_1,\dots ,j_d)\in \Lambda &
\quad \text{and}\quad
k&=(k_1,\dots ,k_d)\in \Lambda ,
\end{alignat*}
and let
$$
b_m(\mabfj _m,\mabfk _m) \equiv \nm {a_{0,m}(\mabfj _m,\cdo ,\mabfk _m)}
{\ell ^q)(\Lambda _m)}, \quad
m =1,\dots ,d,
$$
where
$$
a_{0,m}(\mabfj _m,k) \equiv \nm {a_0(\cdo ,\mabfj _m,k)}{\ell ^p(\Lambda _m)}.
$$
Define inductively
\begin{gather*}
G_m(\mabfj _m) \equiv \nm {G_{m-1}(\cdo ,\mabfj _m)}{\ell ^{p_{2,m}} (\theta _m\mathbf Z)}
\quad \text{and}\quad
c_m(\mabfj _m) \equiv \nm {c_{m-1}(\cdo ,\mabfj _m)}{\ell ^{p_{1,m}} (\theta _m\mathbf Z)},
\end{gather*}
when $m=1,\dots ,d$, where $\Lambda _d^*$, $G_d$ and $c_d$ are interpreted as
$$
\{ 0\} ,\quad
\nm {G_{d-1}}{\ell ^{p_{2,d}}(\theta _d\mathbf Z)} = \nm {Af}{\ell ^{\mabfp _2}_{(\omega _2)}}
\quad \text{and} \quad
\nm {c_{d-1}}{\ell ^{p_{1,d}}(\theta _d\mathbf Z)} = \nm f{\ell ^{\mabfp _1}_{(\omega _1)}},
$$
respectively. We claim
\begin{equation}\label{Eq:GbcIneq}
G_m(\mabfj _m) \le
\left (
\sum _{\mabfk _m\in \Lambda _m^*}\big ( b_m(\mabfj _m,\mabfk _m)
c_m(\mabfj_m -\mabfk _m)\big )^q,
\right )^{1/q}
\end{equation}
for $m=0,\dots ,d$.

\par

In fact, the case $m=0$ follows from the equality in \eqref{MatrixComp1}
and the fact that $q\le 1$. Suppose \eqref{Eq:GbcIneq} is true for 
$m-1$ in place of $m$, and let $r=p_{2,m}/q$. Then $r\in [1,\infty )$, since
$p<\infty$ and $q\le p_{2,m}$. Hence, \eqref{pqconditions}, and H{\"o}lder's
and Minkowski's inequalities in combination with the inductive assumptions give
\begin{multline*}
G_m(\mabfj _m)^q
\le
\left (
\sum _{j_m\in \theta _m\mathbf Z}
\left (
\sum _{\mabfk _{m-1}\in \Lambda _{m-1}^*}(b_{m-1}(\mabfj _{m-1},\mabfk _{m-1})
c_{m-1}(\mabfj _{m-1}-\mabfk _{m-1})
)^q
\right )^r
\right )^{1/r}
\\[1ex]
\le
\sum _{\mabfk _{m-1}\in \Lambda _{m-1}^*}
\left (
\sum _{j_m\in \theta _m\mathbf Z} 
(b_{m-1}(\mabfj _{m-1},\mabfk _{m-1})
c_{m-1}(\mabfj _{m-1}-\mabfk _{m-1})
)^{p_{2,m}}
\right )^{1/r}
\\[1ex]
\le
\sum _{\mabfk _{m-1}\in \Lambda _{m-1}^*}
\nm{b_{m-1}(\cdo ,\mabfj _m,\mabfk _{m-1})}{\ell ^p(\theta _m\mathbf Z)}^q
\nm {c_{m-1}(\cdo ,\mabfj _{m}-\mabfk _m)}{\ell ^{p_{1,m}}(\theta _m\mathbf Z)}^q.
\end{multline*}

\par

We have $c_m(\mabfj _m)= \nm {c_{m-1}(\cdo ,\mabfj _m)}{\ell ^{p_{1,m}}(\theta _m\mathbf Z)}$,
and
$$
\sum _{k_m\in \theta _m\mathbf Z}\nm{b_{m-1}(\cdo ,\mabfj _m,k_m,\mabfk _m)}{\ell ^p(\theta _m\mathbf Z)}^q
\le
\nm{a_{0,m}(\mabfj _m\cdo ,\mabfk _m)}{\ell ^p(\theta _m\mathbf Z)}^q,
$$
by Minkowski's inequality, and a combination of the previous inequalities give \eqref{Eq:GbcIneq}. Hence,
by induction we have that \eqref{Eq:GbcIneq} holds for every $m=0,\dots ,d$, and by letting $m=d$ we obtain
\eqref{Anormest} when $A\in \mathbb U_0(\Lambda )$. The result now follows for general
$\mathbb U^{p,q}(\omega _0,\Lambda )$ when $p<\infty$ and $q\le 1$ by the fact that
$\mathbb U_0(\Lambda )$ is dense in $\mathbb U^{p,q}(\omega _0,\Lambda )$.

\par

Next we consider the case $q\in (1,\infty ]$, and assume first that $p=\infty$.
Then
$$
\frac 1{\mabfp _1}+\frac 1q = 1+\frac 1{\mabfp _2}.
$$
Hence, if $A\in \mathbb U^{\infty ,q}(\omega _0,\Lambda )$ and $f\in \ell _0(\Lambda)$,
then \eqref{MatrixComp1} and Young's inequality give
$$
\nm g{\ell ^{\mabfp _2}_{(\omega _2)}} \le \nm h{\ell ^q}\nm c{\ell ^{\mabfp _1}},
$$
and \eqref{Anormest} follows in this case as well. Since $\max (\mabfp _1)<\infty$ when
$q>1$, the result follows for general $f\in \ell ^{\mabfp _1}_{(\omega _1)}(\Lambda)$
from the fact that $\ell _0(\Lambda)$ is dense in $\ell ^{\mabfp _1}_{(\omega _1)}(\Lambda)$.

\par

For general $p$, the result now follows by multi-linear interpolation between the cases
$(p,q)=(1,1)$ and $(p,q)=\{ \infty \} \times [1,\infty ]$, using Theorems 4.4.1 and 5.6.3
in \cite{BeLo}. The proof is complete.
\end{proof}

\par

The following consequence of the previous result is particularly important.
 
\par

\begin{cor}\label{matrixcont1}
Let $\Lambda$ be as in \eqref{LambdaDef}, $p\in (0,\infty ]$, $\omega _l$,
$l=1,2$ be weights on $\rr d$ and $\omega _0$ be a weight on
$\rr {2d}$ such that
\begin{equation}\label{weightineq1}
\frac {\omega _2(j)}{\omega _1(k)} \le \omega _0(j,k),\qquad j,k\in \Lambda
\end{equation}
holds. Also let $A\in \mathbb U^p (\omega _0,\Lambda )$. Then $A$ in
\eqref{Abmap1} is uniquely extendable to a continuous map from
$\ell ^{p'} _{(\omega _1)}(\Lambda )$ to $\ell ^p_{(\omega _2)}(\Lambda )$, and
\begin{equation}\label{normest}
\nm A{\maclB (\ell ^{p'} _{(\omega _1)}(\Lambda ), \ell ^p_{(\omega _2)}(\Lambda ))}
\le \nm A{\mathbb U^p(\omega _0,\Lambda )}.
\end{equation}
\end{cor}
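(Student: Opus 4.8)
The plan is to derive Corollary \ref{matrixcont1} as a direct specialization of Theorem \ref{matrixcont2}. The strategy is to choose the free parameters in the theorem so that its hypotheses collapse onto the single-exponent setting of the corollary. Concretely, I would take $d=1$ in the role of the permutation dimension (so $\sigma$ is trivially the identity and $\operatorname{S}_d$ plays no role), set $\mabfp _1 = p'$ and $\mabfp _2 = p$ as scalar exponents, and set the two Lebesgue parameters in the theorem to be $p$ itself and $q = 1$. The point of taking $q=1$ is that it turns the mixed matrix class $\mathbb U^{p,q}(\omega _0,\Lambda )$ into $\mathbb U^{p,1}(\omega _0,\Lambda )$; but since the corollary's hypothesis is $A\in \mathbb U^p(\omega _0,\Lambda ) = \mathbb U^{p,p}(\omega _0,\Lambda )$, I must instead keep the second parameter equal to $p$ as well and check the exponent arithmetic works out.

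Let me reconsider the arithmetic, since this is the crux. Applying Theorem \ref{matrixcont2} with matrix class $\mathbb U^{p,q}(\omega _0,\Lambda )$, I need $q=p$ to match $\mathbb U^p = \mathbb U^{p,p}$. The condition \eqref{pqconditions} then requires
\begin{equation*}
\frac 1{\mabfp _2}-\frac 1{\mabfp _1} = \frac 1p + \min\left(0,\frac 1p - 1\right)
\quad\text{and}\quad
p \le \min(\mabfp _2)\le \max(\mabfp _2)\le p,
\end{equation*}
where the last chain forces $\mabfp _2 = p$ (as a scalar). If $p\le 1$, then $\min(0,1/p-1)=0$, giving $1/p - 1/\mabfp _1 = 1/p$, hence $\mabfp _1 = \infty = p'$, which matches the definition of $p'$ for $p\le 1$. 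If $p\ge 1$, then $\min(0,1/p-1) = 1/p - 1$, giving $1/p - 1/\mabfp _1 = 2/p - 1$, hence $1/\mabfp _1 = 1 - 1/p = 1/p'$, so $\mabfp _1 = p'$ in the usual conjugate sense. In both regimes the choice $\mabfp _1 = p'$, $\mabfp _2 = p$ is exactly what \eqref{pqconditions} produces, so the hypotheses of Theorem \ref{matrixcont2} are satisfied.

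The remaining ingredients are routine: the weight condition \eqref{weightineq1} of the corollary is precisely the instance of the hypothesis on $\omega _0$ required by Theorem \ref{matrixcont2}, and the scalar spaces $\ell ^{p'}_{(\omega _1)}(\Lambda )$ and $\ell ^p_{(\omega _2)}(\Lambda )$ are the single-exponent cases of $\ell ^{\mabfp _1}_{\sigma ,(\omega _1)}$ and $\ell ^{\mabfp _2}_{\sigma ,(\omega _2)}$. Thus the conclusion of the theorem, namely the unique continuous extension of $A$ from $\ell _0(\Lambda )$ together with the norm bound \eqref{Anormest}, specializes verbatim to the statement \eqref{normest} of the corollary.

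The only genuine point requiring care — and the step I would flag as the main obstacle — is verifying that the nonlinear constraint \eqref{pqconditions}, with its $\min(0,1/q-1)$ correction term, reproduces the piecewise definition of the conjugate exponent $p'$ across the breakpoint at $p=1$. Once that case split is checked (as above), everything else is immediate substitution into Theorem \ref{matrixcont2}; no new estimates, interpolation, or density arguments are needed beyond those already established in the proof of the theorem.
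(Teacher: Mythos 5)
Your proposal is correct and coincides with what the paper intends: the corollary is stated as a direct consequence of Theorem \ref{matrixcont2}, obtained by taking $q=p$, all components of $\mabfp _2$ equal to $p$ (forced by $q\le \min (\mabfp _2)\le \max (\mabfp _2)\le p$), and all components of $\mabfp _1$ equal to $p'$, and your case split at $p=1$ correctly verifies that \eqref{pqconditions} reproduces the paper's extended conjugate exponent in both regimes. The only cosmetic slip is the remark about taking $d=1$: the lattice dimension is fixed, and what you actually use is that a mixed-norm space with all exponents equal reduces to the ordinary $\ell ^p$ space for any $\sigma$, which is exactly right.
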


\par

\par

The next result deals with Schatten-von Neumann properties for matrix operators.


\par

\begin{thm}\label{MatrixSchatten}
Let $\Lambda$ be as in \eqref{LambdaDef}, $\omega _l$, $l=1,2$ be weights
on $\rr d$ and $\omega _0$ be a weight on
$\rr {2d}$ such that \eqref{weightineq1} holds. Also let $p\in (0,2]$,
and let $A\in \mathbb U^p (\omega _0,\Lambda )$. Then
$A\in \mascI _p(\ell ^2_{(\omega _1)}(\Lambda ),
\ell ^2_{(\omega _2)}(\Lambda ))$, and
\begin{equation}\label{Schattnormest1}
\nm A{\mascI _p(\ell ^2_{(\omega _1)}(\Lambda ),\ell ^2_{(\omega _2)}(\Lambda ))}
\le \nm A{\mathbb U^p(\omega _0,\Lambda )}.
\end{equation}
\end{thm}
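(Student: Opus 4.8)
The plan is to prove the embedding, with the sharp constant $1$, at the two ``ends'' $p\in (0,1]$ and $p=2$ by hand, and then to fill the intermediate range $p\in [1,2]$ by complex interpolation; since $(0,1]\cup [1,2]=(0,2]$ this exhausts the statement. Throughout I identify $A=(a(j,k))_{j,k\in \Lambda}$ with the operator it induces, and I use that the substitution $m=j-k$ turns the defining quasi-norm into a weighted $\ell ^p$ sum over all entries,
\begin{equation*}
\nm A{\mathbb U^p(\omega _0,\Lambda )}^p=\sum _{j,m\in \Lambda}|a(j,m)\omega _0(j,m)|^p ,
\end{equation*}
so that $\mathbb U^p(\omega _0,\Lambda )$ is isometrically a weighted $\ell ^p(\Lambda \times \Lambda )$.

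For $p\in (0,1]$ I would argue by a rank-one decomposition. Let $E_{j,k}$ be the matrix with a single $1$ in position $(j,k)$; as an operator from $\ell ^2_{(\omega _1)}(\Lambda )$ to $\ell ^2_{(\omega _2)}(\Lambda )$ it has rank one, and testing against $\delta _k$ gives $\nm {E_{j,k}}{\mascI _p}=\omega _2(j)/\omega _1(k)$. Writing $A=\sum _{j,k}a(j,k)E_{j,k}$, using that $\nm \cdo {\mascI _p}^p$ is subadditive for $p\le 1$ (so that the series converges in $\mascI _p$), and invoking \eqref{weightineq1}, I obtain
\begin{equation*}
\nm A{\mascI _p}^p\le \sum _{j,k}|a(j,k)|^p\Big (\frac{\omega _2(j)}{\omega _1(k)}\Big )^p\le \sum _{j,k}|a(j,k)\omega _0(j,k)|^p=\nm A{\mathbb U^p(\omega _0,\Lambda )}^p .
\end{equation*}
For $p=2$ I would compute the Hilbert--Schmidt norm in the orthonormal basis $e_k\equiv \omega _1(k)^{-1}\delta _k$ of $\ell ^2_{(\omega _1)}(\Lambda )$: since $Ae_k=\omega _1(k)^{-1}(a(j,k))_j$, summing the Hilbert--Schmidt identity and using \eqref{weightineq1} yields
\begin{equation*}
\nm A{\mascI _2}^2=\sum _{j,k}|a(j,k)|^2\Big (\frac{\omega _2(j)}{\omega _1(k)}\Big )^2\le \nm A{\mathbb U^2(\omega _0,\Lambda )}^2 .
\end{equation*}
Both endpoints thus hold with constant $1$ (for trivial weight the second is the identity $\mathbb U^2=\mascI _2$).

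It remains to handle $p\in [1,2]$, which I would do by complex interpolation between the cases $p=1$ and $p=2$ just established. The Hilbert spaces $\ell ^2_{(\omega _1)}(\Lambda )$ and $\ell ^2_{(\omega _2)}(\Lambda )$ are fixed, so $\{ \mascI _r\} _{1\le r\le 2}$ is an isometric complex interpolation scale, and the weighted $\ell ^r$ scale gives $[\mathbb U^1(\omega _0,\Lambda ),\mathbb U^2(\omega _0,\Lambda )]_\theta =\mathbb U^p(\omega _0,\Lambda )$ isometrically, with $1/p=1-\theta /2$. As the identity embedding has norm at most $1$ at both endpoints, Calder\'on's theorem delivers \eqref{Schattnormest1} on $[1,2]$ with constant $1^{1-\theta}1^\theta =1$. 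I expect this last step to be the main obstacle: obtaining the \emph{sharp} constant forces complex rather than real interpolation (the real method sketched for the unweighted case in the introduction would leave a spurious constant) and confines the interpolation to $[1,2]$, where both endpoint couples are honest Banach scales and interpolate isometrically; the complementary range $(0,1]$ is treated separately precisely because there the quasi-Banach subadditivity of $\nm \cdo {\mascI _p}^p$ makes the elementary rank-one estimate available.
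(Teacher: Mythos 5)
Your proof is correct, but it takes a genuinely different route from the paper's. The paper first identifies $\mathbb U^{2}(\omega _0,\Lambda )$ with $\mascI _2(\ell ^2_{(\omega _1)},\ell ^2_{(\omega _2)})$ (with equal norms), then invokes the factorization result, Theorem \ref{factorizationprop}, to write any $A\in \mathbb U^{2/N}(\omega _0,\Lambda )$ as a product $A_1\circ \cdots \circ A_N$ of matrices in $\mathbb U^{2}(\vartheta _m,\Lambda )$ for suitable intermediate weights $\vartheta _m$, so that the H{\"o}lder inequality \eqref{YoungSchatten} gives $\mathbb U^{2/N}\subseteq \mascI _{2/N}$ for every integer $N\ge 3$; the gaps $p\in [2/N,2]$ are then filled by real interpolation after Birman--Solomyak. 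You instead handle $p\in (0,1]$ by the rank-one decomposition $A=\sum _{j,k}a(j,k)E_{j,k}$ together with the $p$-subadditivity of $\nm \cdo {\mascI _p}^p$ --- which is precisely Lemma \ref{prop:sumOpsSchatten}, a tool the paper itself uses, but only later in Section \ref{sec4} --- and you use complex rather than real interpolation on $[1,2]$. Both arguments are sound; your identification of $\nm A{\mathbb U^p(\omega _0,\Lambda )}^p$ with a weighted $\ell ^p$ sum over all entries, the computation $\nm {E_{j,k}}{\mascI _p}=\omega _2(j)/\omega _1(k)$, and the Hilbert--Schmidt endpoint are all correct. What your route buys is the sharp constant $1$ in \eqref{Schattnormest1}: your endpoints carry constant $1$ and the couples $(\mascI _1,\mascI _2)$ and $(\ell ^1_{(\omega _0)},\ell ^2_{(\omega _0)})$ interpolate isometrically, whereas the real-interpolation step in the paper, as written, yields the embedding with equivalence of quasi-norms but arguably only up to an unspecified constant --- a point you correctly flag. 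What the paper's route buys is uniformity (one mechanism, factorization plus H{\"o}lder, covers all of $(0,2]$ without splitting at $p=1$) and economy, since Theorem \ref{factorizationprop} is a structural result the paper needs anyway and is highlighted in the introduction. The one place where your write-up is terse is the convergence of $\sum _{j,k}a(j,k)E_{j,k}$ to the operator induced by $A$ in $\mascI _p$; this follows from completeness of $\mascI _p$ and entrywise identification of the limit, exactly as in the paper's Corollary \ref{prop:sumOpsSchatten2}, so it is not a gap.
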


\par

\begin{proof}
We may assume that equality is attained in \eqref{weightineq1}, and that
$\nm A{\mathbb U^p(\omega _0,\Lambda )}=1$. Then
it follows that
$$
\mascI _2(\ell ^2_{(\omega _1)}(\Lambda ),\ell ^2_{(\omega _2)}(\Lambda )) =
\mathbb U^2 (\omega _0,\Lambda ),
$$
with equality in norms.

\par

First assume that $p=2/N$ for some integer $N\ge 3$, and let
$A\in \mathbb U^{2/N} (\omega _0,\Lambda )$. Also let
$\vartheta _1(j,k)=\omega _2(j)$, $\vartheta _m(j,k)=1$, $j=2,\dots ,N-1$ and
$\vartheta _N(j,k)=\omega _1(k)$. By Theorem \ref{factorizationprop} we have
$$
A=A_1\circ \cdots \circ A_N
$$
for some $A_m \in \mathbb U^2 (\vartheta _m,\Lambda )$ which satisfy
$\nm {A_m}{\mathbb U^2 (\vartheta _m,\Lambda )}\le 1$, $m=1,\dots ,N$.

\par

By \eqref{YoungSchatten} we get
\begin{multline*}
\nm A{\mascI _{2/N}(\ell ^2_{(\omega _1)},\ell ^2_{(\omega _2)})} \le
\nm {A_1}{\mascI _{2}(\ell ^2,\ell ^2_{(\omega _2)})}
\nm {A_N}{\mascI _{2}(\ell ^2_{(\omega _1)},\ell ^2)}
\prod _{m=2}^{N-1} \nm {A_m}{\mascI _{2}(\ell ^2,\ell ^2)}
\\[1ex]
=
\prod _{m=2}^{N-1} \nm {A_m}{\mathbb U^2 (\vartheta _m,\Lambda )} \le 1,
\end{multline*}
and the result follows in the case $p=2/N$.

\par

The result is therefore true when $p=2/N$ for some integer $N\ge 3$,
and when $p=2$. For $p\in [2/N,2]$, the result now follows by (real) interpolation
between the cases $p=2$ and $p=2/N$, letting $q$, $p_\theta$, $p_k$, $q_k$ and
$\theta \in [0,1]$, $k=0,1$, in Teorema 3.2, (3.11) and (3.13) in \cite{BS} be chosen
such that
$$
q=p_\theta ,\quad q_0=p_0=\frac 2N ,\quad q_1=p_1=2
\quad \text{and}\quad
\frac 1{p_\theta} = \frac {1-\theta}{p_0}+\frac \theta{p_1}.
$$

\par

For general $p\in (0,2]$, the result now follows by choosing
$N\ge 3$ such that $p>2/N$. The proof is complete.
\end{proof}

\par

\par

\section{Continuity and Schatten-von Neumann
properties for pseudo-differential operators}\label{sec3}

\par

In this section we deduce continuity and Schatten-von Neumann
results for pseudo-differential operators with symbols in
modulation spaces. In particular we extend results in
\cite{GH1,Gc2,Toft2,Toft5,Toft11} to include Schatten and
Lebesgue parameters less than one.

\par

We start with the following result on continuity.

\par

\begin{thm}\label{thmOpCont}
Let $t\in \mathbf R$, $\sigma \in \operatorname{S}_{2d}$, $\omega _1,\omega _2
\in \mathscr P_{E}(\rr {2d})$ and $\omega _0\in \mathscr P_{E}(\rr {2d}\oplus \rr {2d})$
be such that
$$
\frac {\omega _2(x,\xi  )}{\omega _1
(y,\eta )} \lesssim \omega _0( (1-t)x+ty,t\xi +(1-t)\eta ,\xi -\eta ,y-x ).
$$
Also let $\mabfp _1,\mabfp _2\in (0,\infty]^{2d}$,
$p,q\in (0,\infty]$ be such that \eqref{pqconditions} hold, and let
$a\in M^{p,q}_{(\omega _0)}(\rr {2d})$. Then
$\op _t(a)$ from $\mathcal S_{1/2}(\rr d)$ to $\mathcal S_{1/2}'(\rr d)$
extends uniquely to a continuous map from $M^{\mabfp _1}
_{\sigma ,(\omega _1)}(\rr d)$ to
$M^{\mabfp _2}_{\sigma ,(\omega _2)}(\rr d)$, and
\begin{equation}\label{PsDOEst}
\nm {\op _t(a)}{\maclB (M^{\mabfp _1}_{\sigma ,(\omega _1)},
M^{\mabfp _2}_{\sigma ,(\omega _2)})}
\lesssim
\nm a{M^{p,q}_{(\omega _0)}}.
\end{equation}
\end{thm}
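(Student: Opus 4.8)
The plan is to transfer the continuity statement for the pseudo-differential operator $\op _t(a)$ to the matrix-continuity result of Theorem \ref{matrixcont2} by means of the Gabor machinery from Subsection \ref{subsec1.2}. First I would fix a window $\phi \in M^r_{(v)}(\rr d)$ (for every $r>0$) and a lattice $\Lambda = \Lambda _\ep$ small enough that Propositions \ref{ThmS} and \ref{ConseqThmS} apply, together with its canonical dual $\psi$. The Gabor expansions \eqref{GabExpForm} identify $M^{\mabfp _1}_{\sigma ,(\omega _1)}(\rr d)$ and $M^{\mabfp _2}_{\sigma ,(\omega _2)}(\rr d)$, up to equivalence of quasi-norms, with the sequence spaces $\ell ^{\mabfp _1}_{\sigma ,(\omega _1)}(\Lambda ^2)$ and $\ell ^{\mabfp _2}_{\sigma ,(\omega _2)}(\Lambda ^2)$ via the analysis operator $C^\Lambda _\phi$ and synthesis operator $D^\Lambda _\psi$. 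Thus $\op _t(a)$ is intertwined with the matrix operator $A$ acting on Gabor coefficients, where $A$ has entries given by the Gabor matrix $\scal {\op _t(a)\, \pi (\mu )\phi}{\pi (\nu )\psi}$ indexed by the product lattice.

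The crux is then to show that membership $a\in M^{p,q}_{(\omega _0)}(\rr {2d})$ forces this Gabor matrix $A$ to lie in $\mathbb U^{p,q}(\tilde \omega _0,\Lambda )$ for a weight $\tilde \omega _0$ satisfying \eqref{weightineq1} relative to the index $\omega _1,\omega _2$. The key identity here is the standard one relating the Gabor matrix of a pseudo-differential operator to the short-time Fourier transform of its symbol: the entry indexed by $(\mu ,\nu )$ is, up to a unimodular factor, essentially $V_\Phi a$ evaluated at a point determined affinely by $\mu$ and $\nu$, where $\Phi$ is a fixed window on $\rr {2d}$ built from $\phi$ and $\psi$ via a $t$-Wigner distribution \eqref{wignertdef}. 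The weight hypothesis in the theorem is precisely the translation, under this affine change of variables $(\mu ,\nu )\mapsto ((1-t)x+ty, t\xi +(1-t)\eta , \xi -\eta , y-x)$, of the matrix-weight condition \eqref{weightineq1}; verifying that this change of variables sends the mixed $M^{p,q}_{(\omega _0)}$-norm of $a$ into the $\mathbb U^{p,q}$-norm of $A$ is the main technical step and the part most likely to require careful bookkeeping of the $h$-function in \eqref{haomegadef}.

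Once $A\in \mathbb U^{p,q}(\tilde \omega _0,\Lambda )$ is established with the norm bound $\nm A{\mathbb U^{p,q}} \lesssim \nm a{M^{p,q}_{(\omega _0)}}$, Theorem \ref{matrixcont2} gives directly that $A\colon \ell ^{\mabfp _1}_{\sigma ,(\omega _1)}(\Lambda ^2)\to \ell ^{\mabfp _2}_{\sigma ,(\omega _2)}(\Lambda ^2)$ is continuous under the hypothesis \eqref{pqconditions}, with the estimate \eqref{Anormest}. Composing with the bounded synthesis and analysis operators yields that $\op _t(a) = D^\Lambda _\psi \circ A\circ C^\Lambda _\phi$ is continuous from $M^{\mabfp _1}_{\sigma ,(\omega _1)}(\rr d)$ to $M^{\mabfp _2}_{\sigma ,(\omega _2)}(\rr d)$, and chaining the three norm bounds produces \eqref{PsDOEst}. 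The uniqueness of the extension follows from the density of $\maclS _{1/2}$ (equivalently $\Sigma _1$) in the relevant modulation spaces when the exponents are finite, and from weak$^*$ arguments via Proposition \ref{ConseqThmS}(1) in the remaining cases.

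The step I expect to be the genuine obstacle is the weighted Gabor-matrix estimate, i.e.\ showing that the off-diagonal decay of the Gabor matrix of $\op _t(a)$ is controlled by $V_\Phi a$ together with the moderateness of $\omega _0$, so that the mixed quasi-norm structure (Lebesgue exponent $p$ in one group of variables, $q$ in the other) is faithfully reproduced in $\mathbb U^{p,q}(\tilde \omega _0,\Lambda )$. This requires that the window $\Phi$ itself lie in $M^r_{(v_0)}(\rr {2d})$ for sufficiently small $r$ so that $V_\Phi a$ makes sense and enjoys the needed convolution/moderateness estimates in the quasi-Banach range $p,q<1$; the absence of local convexity means Young-type and Minkowski-type inequalities must be applied in the forms valid for exponents below one, exactly as exploited inside the proof of Theorem \ref{matrixcont2}.
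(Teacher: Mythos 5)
Your proposal follows essentially the same route as the paper: the paper proves exactly the factorization $\op (a)=D_{\phi _1}\circ A\circ C_{\phi _2}$ (Lemma \ref{PsDoDisc}), identifies the matrix entries with $V_\Psi a$ evaluated at an affine rearrangement of the lattice indices so that $\nm a{M^{p,q}_{(\omega _0)}}\asymp \nm A{\mathbb U^{p,q}(\omega ,\Lambda ^2)}$ under \eqref{omega0omegaRel} (your ``genuine obstacle'' step, with $\Phi$ the Rihaczek distribution after first reducing to $t=0$), and then applies Theorem \ref{matrixcont2} together with the boundedness of the analysis and synthesis operators. The only minor divergence is in the uniqueness discussion for the endpoint case $p=\infty$, $\max (\mabfp _1)=\infty$, where the paper invokes an external result (Theorem A.2 in \cite{Toft11}) rather than a weak$^*$ argument, but this does not change the substance.
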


\par

We need some preparing lemmata for the proof. We recall that
$\Lambda ^2=\Lambda \times \Lambda$ when $\Lambda$ is a lattice.

\par

\begin{lemma}\label{aFrames}
Let $v\in \mascP _E(\rr {4d})$, $\phi _1,\phi _2\in \Sigma _1(\rr d)\setminus 0$,
and let
$$
\Phi (x,\xi )=\phi _1(x)\overline {\widehat \phi _2(\xi )}e^{-i\scal x\xi },
$$
Then there is a
lattice $\Lambda$ in \eqref{LambdaDef} such that
\begin{align*}
&\{ \Phi (x-j,\xi -\iota )e^{i(\scal x\kappa +\scal k\xi )} \} _{(j, \iota ),(k, \kappa )
\in \Lambda ^2}
\intertext{is a Gabor frame with canonical dual frame}
&\{ \Psi (x-j,\xi -\iota )e^{i(\scal x\kappa +\scal k\xi )} \}
_{(j,\iota ),(k,\kappa ) \in \Lambda ^2},
\end{align*}
where $\Psi = (S_{\Phi ,\Phi }^{\Lambda ^2\times \Lambda ^2})^{-1}\Phi$
belongs to $M^r _{(v)}(\rr {2d})$ for every $r>0$.
\end{lemma}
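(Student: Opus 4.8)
The plan is to construct the desired frame by combining the one-dimensional Gabor frame theory from Proposition~\ref{ThmS} and Proposition~\ref{ConseqThmS} with the special tensor-type structure of $\Phi$. The key observation is that $\Phi(x,\xi)=\phi_1(x)\overline{\widehat{\phi_2}(\xi)}e^{-i\scal x\xi}$ is, up to the chirp factor $e^{-i\scal x\xi}$, a tensor product of two functions in $\Sigma_1$. Since $\phi_1,\phi_2\in\Sigma_1(\rr d)\setminus 0$ and the Fourier transform restricts to a homeomorphism on $\Sigma_1$, we have $\widehat{\phi_2}\in\Sigma_1(\rr d)\setminus 0$, and hence $\phi_1\otimes\overline{\widehat{\phi_2}}\in\Sigma_1(\rr{2d})\setminus 0$. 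The chirp $e^{-i\scal x\xi}$ has modulus one and, being a quadratic exponential, acts as a metaplectic (hence $M^r_{(v)}$-preserving) transformation; so $\Phi\in\Sigma_1(\rr{2d})\subseteq M^1_{(v)}(\rr{2d})$ for any $v\in\mascP_E(\rr{4d})$.

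First I would invoke Proposition~\ref{ThmS}(2) on the phase space $\rr{2d}$: since $\Phi\in M^1_{(v)}(\rr{2d})\setminus 0$, there is an $\ep_0>0$ so that for $\Lambda^2=\ep\zz{2d}$ with $\ep\in(0,\ep_0]$ the Gabor system generated by $\Phi$ over $\Lambda^2\times\Lambda^2$ is a frame whose canonical dual window $\Psi=(S_{\Phi,\Phi}^{\Lambda^2\times\Lambda^2})^{-1}\Phi$ again lies in $M^1_{(v)}(\rr{2d})$. To promote membership from $M^1_{(v)}$ to $M^r_{(v)}$ for \emph{every} $r>0$, I would apply Remark~\ref{RemThmS}: choosing an auxiliary submultiplicative weight $v_0$ with $L^1_{(v_0)}\subseteq L^r$ and taking $\Phi\in M^1_{(v_0v)}\subseteq M^r_{(v)}$ (which holds since $\Phi\in\Sigma_1$ lies in every weighted $M^1$), one ensures the canonical dual window also belongs to $M^r_{(v)}(\rr{2d})$ for all $r>0$. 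The underlying lattice can be taken of the form~\eqref{LambdaDef} because the construction only requires a sufficiently fine cubic lattice, matching the standing assumption stated just before Subsection~\ref{subsec1.5}.

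The main obstacle, and the step deserving genuine care, is verifying that the chirp factor $e^{-i\scal x\xi}$ does not destroy either the frame property or the Gelfand--Shilov/modulation-space regularity, and that it is compatible with the specific form of the time-frequency shifts appearing in the statement, namely $\Phi(x-j,\xi-\iota)e^{i(\scal x\kappa+\scal k\xi)}$. Concretely, multiplication by $e^{-i\scal x\xi}$ is the metaplectic operator associated with the symplectic matrix implementing a shear on $\rr{2d}\oplus\rr{2d}$; such operators conjugate time-frequency shifts into time-frequency shifts (possibly composed with a further linear change of the lattice) and preserve all $M^r_{(v)}$ spaces. I would therefore either absorb the chirp into the metaplectic covariance of the short-time Fourier transform, reducing the claim to the chirp-free tensor window $\phi_1\otimes\overline{\widehat{\phi_2}}$ for which the frame statement is standard, or equivalently track the chirp directly and check that it only modifies the phase-space shift parameters. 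In both approaches the regularity of $\Psi$ is inherited automatically because metaplectic operators are homeomorphisms on each $M^r_{(v)}(\rr{2d})$. Once the chirp is handled, the remaining assertions are immediate consequences of Propositions~\ref{ThmS} and~\ref{ConseqThmS}.
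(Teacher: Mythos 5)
Your proof is correct and follows essentially the same route as the paper: establish that $\Phi\in\Sigma_1(\rr{2d})\setminus 0$ (the paper simply cites \cite{CaTo,CaWa} for this, whereas you argue it directly from the tensor structure and the invariance of $\Sigma_1$ under multiplication by a unimodular quadratic chirp) and then apply Remark \ref{RemThmS} (equivalently Proposition \ref{ThmS}) with $\Phi$ itself as the window on $\rr{2d}$. Once $\Phi$ is taken as the window, the concerns in your last paragraph about the chirp interfering with the frame property or with the form of the time-frequency shifts evaporate: the system in the statement is just the standard Gabor system over $\Lambda^2\times\Lambda^2$ generated by $\Phi$ (with the modulation indices relabelled), so no reduction to the chirp-free tensor window is needed.
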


\par

Note that $\Phi$ in Lemma \ref{aFrames} is the Rihaczek (cross)-distribution
of $\phi _1$ and $\phi _2$ (cf. \cite{GrSt}).

\par

\begin{proof}
The result follows from Remark \ref{RemThmS}, and the fact that
$\Phi \in \Sigma _1(\rr {2d})\setminus 0$ in view of \cite[Theorem 3.1]{CaTo} or
\cite[Proposition 3.4]{CaWa} .
\end{proof}

\par

\begin{lemma}\label{PsDoDisc}
Let $\Lambda$, $\phi _1$, $\phi _2$, $\Phi$ and $\Psi$ be as in Lemma
\ref{aFrames}. Also let $v\in \mascP _E (\rr {4d})$, $a\in M^\infty _{(1/v)}(\rr {2d})$,
\begin{multline*}
c_0(\mabfj ,\mabfk) \equiv (V_\Psi a)(j,\kappa ,\iota -\kappa ,k-j)e^{i\scal {k-j}\kappa},
\\[1ex]
\text{where}\quad \mabfj =(j,\iota )\in \Lambda ^2 ,\ \mabfk = (k,\kappa )\in \Lambda ^2.
\end{multline*}
and let $A$ be the matrix $A=(c_0(\mabfj ,\mabfk )_{\mabfj ,\mabfk \in \Lambda ^2}$.
Then the following is true:
\begin{enumerate}
\item if $p,q\in (0,\infty ]$ and $\omega ,\omega _0\in \mascP _E(\rr {4d})$ satisfy
\begin{equation}\label{omega0omegaRel}
\omega (x,\xi ,y,\eta )\asymp \omega _0(x,\eta ,\xi -\eta ,y-x),
\end{equation}
then $a\in M^{p,q}_{(\omega _0)}(\rr {2d})$, if and only if
$A\in \mathbb U^{p,q}(\omega ,\Lambda ^2 )$, and then
$$
\nm a{M^{p,q}_{(\omega _0)}}\asymp
\nm A{\mathbb U^{p,q}(\omega ,\Lambda ^2 )}\text ;
$$

\vrum

\item $\op (a)$ as a map from $\maclS _{1/2}(\rr d)$ to $\maclS _{1/2}'(\rr d)$, given
by
\begin{equation}\label{OpaFactorization}
\op (a) = D_{\phi _1} \circ A \circ C_{\phi _2}.
\end{equation}
\end{enumerate}
\end{lemma}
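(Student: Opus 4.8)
The plan is to prove the two parts separately: part (1) is an identity between two discrete mixed quasi-norms, and part (2) is a Gabor-matrix realization of $\op (a)$. The reindexing that appears in (2) is exactly the shearing $(j,k)\mapsto (j,j-k)$ built into the matrix class of Definition \ref{matrixset1}, which is what makes (1) come out correctly, so I would set up the dictionary in (2) first and reuse it.

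For part (1) I would start from the discrete Gabor characterization in Proposition \ref{ConseqThmS}(2). Since $\Psi \in M^r_{(v)}(\rr {2d})$ for every $r>0$ and $a\in M^\infty _{(1/v)}(\rr {2d})$, the dual pair $(\Phi ,\Psi )$ gives
\[
\nm a{M^{p,q}_{(\omega _0)}}\asymp \nm {V_\Psi a}{\ell ^{p,q}_{(\omega _0)}(\Lambda ^2\times \Lambda ^2)},
\]
so it suffices to identify the right-hand side with $\nm A{\mathbb U^{p,q}(\omega ,\Lambda ^2)}$. Unwinding Definition \ref{matrixset1} for $A=(c_0(\mabfj ,\mabfk ))$, with $\mabfj =(j,\iota )$, $\mabfk =(k,\kappa )$, one has $H_{A,\omega }(\mabfj ,\mabfk )=c_0(\mabfj ,\mabfj -\mabfk )\, \omega (\mabfj ,\mabfj -\mabfk )$, and inserting the definition of $c_0$ together with \eqref{omega0omegaRel} shows that $|H_{A,\omega }(\mabfj ,\mabfk )|$ equals $|V_\Psi a\cdot \omega _0|$ evaluated at the lattice point $(j,\iota -\kappa ,\kappa ,-k)$. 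Summing first in $\ell ^p$ over $\mabfj =(j,\iota )$ and then in $\ell ^q$ over $\mabfk =(k,\kappa )$, and applying the substitutions $\iota \mapsto \iota -\kappa$ (for fixed $\kappa$) in the inner sum and $k\mapsto -k$ in the outer one — both bijections of $\Lambda$ — turns $\nm A{\mathbb U^{p,q}(\omega ,\Lambda ^2)}$ into the mixed norm of $V_\Psi a$ over $\Lambda ^2\times \Lambda ^2$, with $(j,\iota -\kappa )$ in the inner (phase-space) $\ell ^p$-slot and $(\kappa ,-k)$ in the outer (frequency) $\ell ^q$-slot. This matches the convention of $M^{p,q}_{(\omega _0)}$, and the norm equivalence, hence the equivalence of membership, follows. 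The role of \eqref{omega0omegaRel} is precisely to absorb the shearing permutation $(x,\xi ,y,\eta )\mapsto (x,\eta ,\xi -\eta ,y-x)$.

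For part (2) I would read $\op (a)=D_{\phi _1}\circ A\circ C_{\phi _2}$ as operators $\maclS _{1/2}(\rr d)\to \maclS _{1/2}'(\rr d)$, i.e.\ after pairing with $f,g\in \maclS _{1/2}(\rr d)$; from \eqref{e0.5} by Fubini and Fourier inversion one has $(\op (a)f,g)=(2\pi )^{-d/2}(a,W^0_{g,f})$, so everything can be tested against the fixed Rihaczek distribution $W^0_{g,f}\in \maclS _{1/2}(\rr {2d})$. First note that $\Phi =W^0_{\phi _1,\phi _2}$, so by \eqref{trankone}--\eqref{wignertdef} the operator $\op (\Phi )$ is rank one. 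The computational core is that a time-frequency shift $\Phi (\cdo -\boldsymbol m)\, e^{i\scal \cdo {\boldsymbol n}}$ of this Rihaczek distribution is again a Rihaczek distribution $W^0_{g_1,g_2}$: expanding the cross phase $e^{-i\scal {x-m_1}{\xi -m_2}}$ one reads off that $g_1$ and $g_2$ are time-frequency shifts of $\phi _1$ and $\phi _2$, with $g_1$ carrying the phase $e^{-i\scal j\kappa }$ and $g_2$ the phase $e^{-i\scal k\kappa }$, under the bijective reindexing $\boldsymbol m=(j,\kappa )$, $\boldsymbol n=(\iota -\kappa ,k-j)$. I would then expand $a$ in the Gabor frame of Lemma \ref{aFrames},
\[
a=\sum _{\boldsymbol m,\boldsymbol n}(V_\Psi a)(\boldsymbol m,\boldsymbol n)\, \Phi (\cdo -\boldsymbol m)\, e^{i\scal \cdo {\boldsymbol n}},
\]
apply the weak-$*$ continuous map $a\mapsto \op (a)$ from Remark \ref{BijKernelsOps} term by term, and use \eqref{trankone} on each summand. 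The phases of $g_1$ and $\overline{g_2}$ combine into $e^{i\scal {k-j}\kappa }$, which together with $(V_\Psi a)(j,\kappa ,\iota -\kappa ,k-j)$ is exactly the matrix entry $c_0((j,\iota ),(k,\kappa ))$; the pairing of $f$ against $e^{i\scal \cdo \kappa }\phi _2(\cdo -k)$ is the analysis coefficient $(C_{\phi _2}f)(k,\kappa )$ and $e^{i\scal \cdo \iota }\phi _1(\cdo -j)$ is the synthesis atom, with the normalizing constants matching. Resumming over $(j,\iota ,k,\kappa )\in \Lambda ^4$ gives $\op (a)f=D_{\phi _1}(A(C_{\phi _2}f))$.

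The main obstacle is concentrated in (2). The delicate step is the exact phase bookkeeping in $\Phi (\cdo -\boldsymbol m)\, e^{i\scal \cdo {\boldsymbol n}}=W^0_{g_1,g_2}$: tracking every constant and sign is what produces the precise factor $e^{i\scal {k-j}\kappa }$ in $c_0$, and any slip there would misidentify the matrix. A second point requiring care is that the Gabor series for $a$ converges only in the weak-$*$ topology, so the termwise application of $\op $ must be justified by pairing against the nice test object $W^0_{g,f}$ and invoking weak-$*$ continuity rather than norm convergence. Once the index dictionary of (2) is fixed, the shearing manipulation in (1) is routine.
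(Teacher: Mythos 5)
Your proposal is correct and follows essentially the same route as the paper: part (1) is the same reindexing of the discrete Gabor characterization from Proposition \ref{ConseqThmS}\,(2), and part (2) rests on the same Gabor expansion of $a$ with the same index dictionary and the same phase factor $e^{i\scal {k-j}\kappa}$. The only cosmetic difference is that you identify each shifted atom $\Phi _{\mabfj ,\mabfk}$ as a Rihaczek distribution $W^0_{g_1,g_2}$ and invoke the rank-one formula \eqref{trankone}, whereas the paper evaluates $\op (\Phi _{\mabfj ,\mabfk})f$ by a direct integral computation; the two calculations coincide.
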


\par

Some arguments in \cite{GrSt} appear in the proof of Lemma \ref{PsDoDisc}.

\par

\begin{proof}
We have
$$
|c_0(\mabfj ,\mabfj -\mabfk )| = | (V_\Psi a) (j,\iota -\kappa ,\kappa ,-k)|.
$$
Hence, Proposition \ref{ConseqThmS} (2) gives
$$
\nm A{\mathbb U^{p,q}(\omega ,\Lambda ^2 )}
=
\nm {V_\Psi a}{\ell ^{p,q}_{(\omega _0)}(\Lambda ^2 \times \Lambda ^2 )}
\asymp
\nm a{M^{p,q}_{(\omega _0)}},
$$
and (1) follows.

\par

Next we prove (2). Let $f\in \maclS _{1/2}(\rr d)$, and let
$$
c(\mabfj ,\mabfk ) = (V_\Psi a)(j,\iota ,\kappa ,k).
$$
By Proposition \ref{ConseqThmS} we have
$$
a= \sum _{\mabfj ,\mabfk \in \Lambda ^2 }c(\mabfj , \mabfk )\Phi _{\mabfj , \mabfk},
$$
where
$$
\Phi _{\mabfj , \mabfk}(x,\xi )= e^{i(\scal x\kappa +\scal k\xi)}\Phi (x-j,\xi -\iota ).
$$
This gives
$$
\op (a)  = \sum _{\mabfj ,\mabfk \in \Lambda ^2 }c(\mabfj , \mabfk )
\op (\Phi _{\mabfj , \mabfk}),
$$
and we shall evaluate $\op (\Phi _{\mabfj , \mabfk})f$.

\par

We have
$$
\Phi _{\mabfj , \mabfk}(x,\xi ) = \phi _1(x-j)\overline
{\widehat \phi _2(\xi -\iota )}e^{-i\scal {x-j}{\xi -\iota }}e^{i(\scal x\kappa +\scal k\xi )},
$$
and by straight-forward computations we get
$$
(\op (\Phi _{\mabfj , \mabfk})f)(x) = \phi _1(x-j)e^{i\scal x{\iota +\kappa }}e^{-i\scal j\iota}
F_0(\mabfj , \mabfk ),
$$
where
$$
F_0(\mabfj , \mabfk )
=
(2\pi )^{-d/2}\int \widehat f(\xi ) \overline {\widehat \phi _2(\xi -\iota )}
e^{i\scal {j+k}\xi}\, d\xi = (V_{\widehat \phi _2}\widehat f)(\iota , -(j+k)).
$$
Since
\begin{equation*}
(V_{\widehat \phi _2}\widehat f)(\xi ,-x) = e^{i\scal x\xi }V_{\phi _2} f(x,\xi ),
\end{equation*}
we get
$$
(\op (\Phi _{\mabfj , \mabfk})f)(x) = \big (e^{i\scal k\iota} V_{\phi _2} f(j+k,\iota )\big )
\phi _1(x-j)e^{i\scal x{\iota +\kappa }}.
$$

\par

This gives
\begin{multline}\label{OpaComp}
(\op (a)f)(x) = \sum _{\mabfj ,\mabfk \in \Lambda ^2} (V_\Psi a)(j,\iota ,\kappa ,k)
e^{i\scal k\iota} V_{\phi _2} f(j+k,\iota )\phi _1(x-j)e^{i\scal x{\iota +\kappa }}
\\[1ex]
= \sum _{\mabfj ,\mabfk \in \Lambda ^2} (V_\Psi a)(j,\kappa ,\iota -\kappa ,k-j)
e^{i\scal {k-j}\kappa} V_{\phi _2} f(\mabfk )\phi _1(x-j)e^{i\scal x\iota}
\\[1ex]
= \sum _{\mabfj \in \Lambda ^2} h(\mabfj )\phi _1(x-j)e^{i\scal x\iota},
\end{multline}
where
$$
h(\mabfj ) = \sum _{\mabfk \in \Lambda ^2} c_0(\mabfj ,\mabfk )
V_{\phi _2} f(\mabfk ).
$$
The result now follows from the facts that $h=A\cdot (C_{\phi _2}f)$ and that
the right-hand side of \eqref{OpaComp} is equal to $(D_{\phi _1}h)(x)$.
%
%
%
%
%
%
%
%
%
%
%
\end{proof}

\par

\begin{proof}[Proof of Theorem \ref{thmOpCont}]
By Proposition 1.7 in \cite{Toft12} and its proof, it suffices to prove
the result for $t=0$. Let $\omega$, $\omega _0$, $\Lambda$,
$\phi _1$, $\phi _2$ and $A$ be as in Lemma \ref{PsDoDisc}. Then
\begin{equation}\label{SynthOps}
C_{\phi _2}\, :\, M^{\mabfp _1}_{(\omega _1)}(\rr d) \to
\ell ^{\mabfp _1}_{(\omega _1)}(\Lambda ^2)
\quad \text{and}\quad
D_{\phi _1}\, :\, \ell ^{\mabfp _2}_{(\omega _2)}(\Lambda ^2 )
\to M^{\mabfp _2}_{(\omega _2)}(\rr d)
\end{equation}
are continuous.

\par

Furthermore, since
$$
\frac {\omega _2(x,\xi )}{\omega _1(y,\eta )}\lesssim
\omega _0(x,\eta ,\xi -\eta ,y-x),
$$
it follows from \eqref{omega0omegaRel} that
$$
\frac {\omega _2(X)}{\omega _1(Y)}\lesssim
\omega (X ,Y),\quad X=(x,\xi )\in \rr {2d},\ Y=(y,\eta )\in \rr {2d}.
$$
holds. Hence Theorem \ref{matrixcont2} shows that
$$
A \, :\, \ell ^{\mabfp _1}_{(\omega _1)}(\Lambda ^2 ) \to
\ell ^{\mabfp _2}_{(\omega _2)}(\Lambda ^2 )
$$
is continuous. Hence, if $\op (a)$ is defined by
\eqref{OpaFactorization}, it follows that $\op (a)$
from $\maclS _{1/2}(\rr d)$ to $\maclS _{1/2}'(\rr d)$
extends to a continuous map from
$M^{\mabfp _1}_{(\omega _1)}(\rr d)$ to
$M^{\mabfp _2}_{(\omega _2)}(\rr d)$.

\par

It remains to prove the uniqueness of the extension,
If $\max (\mabfp _1 )<\infty$, then the uniqueness follows
from the fact that $\maclS _{1/2}(\rr d)$ is dense in
$M^{\mabfp _1}_{(\omega _1)}(\rr d)$. If instead
$p<\infty$, then $q<\infty$, and $\maclS _{1/2}(\rr {2d})$
is dense in $M^{p,q}_{(\omega _0)}(\rr {2d})$. The
uniqueness now follows in this case from
\eqref{PsDOEst}$'$, and the fact that
$\op (a)$ is uniquely defined as an operator from
$\maclS _{1/2}'(\rr d)$ to $\maclS _{1/2}(\rr d)$, when
$a\in \maclS _{1/2}(\rr {2d})$.

\par

Finally assume that $p=\infty$ and $\max (\mabfp _1 ) =\infty$. Then
\eqref{pqconditions}$'$ shows that $q\le 1$. In particular, if
$f\in M^{\mabfp _1}_{(\omega _1)}(\rr d)$ then $f\in M^\infty _{(\omega _1)}(\rr d)$.
The uniqueness now follows from the fact that $\op (a)f$ is uniquely
defined as an element in $M^\infty _{(\omega _2)}(\rr d)$, in view of
Theorem A.2 in \cite{Toft11}.
\end{proof}

\par

We have also the following result on Schatten-von Neumann properties
for pseudo-differential operators.

\par

\begin{thm}\label{thmOpSchatten}
Let $t\in \mathbf R$, $\omega _1,\omega _2\in \mathscr P_{E}(\rr {2d})$
and $\omega _0\in \mathscr P_{E}(\rr {2d}\oplus \rr {2d})$
be such that
\begin{equation}\label{Eq:omegajCond}
\frac {\omega _2(x,\xi  )}{\omega _1
(y,\eta )} \asymp \omega _0( (1-t)x+ty,t\xi +(1-t)\eta ,\xi -\eta ,y-x )
\end{equation}
Also let 
$p,p_j,q,q_j\in (0,\infty ]$ be such that
$$
p_1\le p,\quad q_1\le \min (p,p'),\quad p_2\ge \max (p,1),
\quad q_2\ge \max (p,p').
$$
Then
\begin{equation}\label{ModSchattenEmb}
M^{p_1,q_1}_{(\omega _0)}(\rr {2d})
\subseteq
s_{t,p}(\omega _1,\omega _2)
\subseteq
M^{p_2,q_2}_{(\omega _0)}(\rr {2d})
\end{equation}
and
\begin{equation}\label{PsDOEstSchatt}
\nm a{M^{p_2,q_2}_{(\omega _0)}}\lesssim
\nm a{s_{t,p}(\omega _1,\omega _2)}\lesssim
\nm a{M^{p_1,q_1}_{(\omega _0)}}.
\end{equation}
\end{thm}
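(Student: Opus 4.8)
The plan is to move the whole problem to the matrix level through the Gabor factorization of Lemma~\ref{PsDoDisc}, to prove the two matrix embeddings that the stated index conditions encode, and then to read the conclusions back. First I would reduce to $t=0$ exactly as in the proof of Theorem~\ref{thmOpCont}: by \eqref{calculitransform} and Proposition~1.7 in \cite{Toft12} one has $\op_t(a)=\op_0(b)$ with the modulation-space norms of $b$ matching those of $a$ after the affine change of variables built into \eqref{Eq:omegajCond}, so for $t=0$ the weight hypothesis reads $\omega_2(x,\xi)/\omega_1(y,\eta)\asymp\omega_0(x,\eta,\xi-\eta,y-x)$. I then fix windows $\phi_1,\phi_2$, the lattice $\Lambda$ and the matrix $A$ as in Lemma~\ref{PsDoDisc}, with the weight $\omega$ on $\rr{4d}$ tied to $\omega_0$ by \eqref{omega0omegaRel}. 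As in the proof of Theorem~\ref{thmOpCont}, this turns the $t=0$ hypothesis into the matrix inequality \eqref{weightineq1} for $\omega$ relative to $\omega_1,\omega_2$, and Lemma~\ref{PsDoDisc}(1) gives $\|a\|_{M^{p,q}_{(\omega_0)}}\asymp\|A\|_{\mathbb U^{p,q}(\omega,\Lambda^2)}$ for all $p,q$.

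The Schatten norm can be transferred in both directions because the analysis and synthesis operators, together with those of the canonical dual windows $\psi_1,\psi_2$, are all bounded between $M^2_{(\omega_k)}(\rr d)$ and $\ell^2_{(\omega_k)}(\Lambda^2)$ by Proposition~\ref{ConseqThmS}. Since $\sigma_j(BTC)\le\|B\|\,\sigma_j(T)\,\|C\|$, the factorization \eqref{OpaFactorization}, $\op(a)=D_{\phi_1}\circ A\circ C_{\phi_2}$, gives at once $\|\op(a)\|_{\mascI_p(\omega_1,\omega_2)}\lesssim\|A\|_{\mascI_p(\ell^2_{(\omega_1)},\ell^2_{(\omega_2)})}$, which I would use for the sufficiency embedding. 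For necessity I would use the analysis side instead: any $a$ with $\op(a)\in\mascI_p$ is in particular bounded $M^2_{(\omega_1)}\to M^2_{(\omega_2)}$, hence lies in $M^\infty_{(1/v)}(\rr{2d})$ for a suitable $v$, and the Gabor matrix $G=C_{\psi_1}\circ\op(a)\circ D_{\psi_2}$ satisfies $\|G\|_{\mascI_p}\lesssim\|\op(a)\|_{\mascI_p(\omega_1,\omega_2)}$ by the same ideal inequality. By the computation underlying Lemma~\ref{PsDoDisc}, the entries of $G$ are phase factors times short-time Fourier transform values of $a$ taken against the Rihaczek distribution of $\psi_1,\psi_2$, so Proposition~\ref{ConseqThmS}(2) yields $\|G\|_{\mathbb U^{p_2,q_2}(\omega,\Lambda^2)}\asymp\|a\|_{M^{p_2,q_2}_{(\omega_0)}}$. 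Using the synthesis side for sufficiency and the analysis side for necessity is what avoids any appeal to biorthogonality, which fails for overcomplete Gabor systems.

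It remains to establish two weighted matrix embeddings. For the first, $\mathbb U^{p_1,q_1}(\omega,\Lambda^2)\subseteq\mascI_p$, the nestings $\ell^{p_1}\subseteq\ell^p$ and $\ell^{q_1}\subseteq\ell^{\min(p,p')}$ reduce it to $\mathbb U^{p,\min(p,p')}\subseteq\mascI_p$. For $p\le2$ this is $\mathbb U^p\subseteq\mascI_p$, i.e.\ Theorem~\ref{MatrixSchatten}; for $p\ge2$ it reads $\mathbb U^{p,p'}\subseteq\mascI_p$, which I would get by complex interpolation between the two Banach endpoints $\mathbb U^{2,2}=\mascI_2$ (Theorem~\ref{MatrixSchatten} at $p=2$) and $\mathbb U^{\infty,1}\subseteq\mascI_\infty$ (boundedness on $\ell^2_{(\omega_k)}$, i.e.\ Theorem~\ref{matrixcont2} with $\mabfp_1=\mabfp_2=2$, $p=\infty$ and $q=1$), using $[\mascI_2,\mascI_\infty]_\theta=\mascI_p$ and the matching interpolation of the mixed-norm classes (cf.\ \cite{BS,BeLo}). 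For the second, $\mascI_p\subseteq\mathbb U^{p_2,q_2}(\omega,\Lambda^2)$, I would dualize the first embedding, pairing $\mascI_p$ with $\mascI_{p'}$ and $\mathbb U^{r,s}$ with $\mathbb U^{r',s'}$; this yields $\mascI_p\subseteq\mathbb U^{p,\max(p,p')}$ for $p>1$, whence the claim by nesting since then $\max(p,1)=p$. The range $p\le1$, where $\max(p,1)=1$ and $q_2=\infty$, I would settle directly: $\mascI_p\subseteq\mascI_1\subseteq\mathbb U^{1,\infty}$, the last step being the Cauchy--Schwarz bound $\sup_k\sum_j|T_{j,j-k}|\le\|T\|_{\mascI_1}$, followed by nesting into $\mathbb U^{p_2,\infty}$.

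Chaining the estimates then finishes the proof. The synthesis side gives $\|\op(a)\|_{\mascI_p}\lesssim\|A\|_{\mascI_p}\lesssim\|A\|_{\mathbb U^{p_1,q_1}}\asymp\|a\|_{M^{p_1,q_1}_{(\omega_0)}}$, which is the left embedding in \eqref{ModSchattenEmb} and the right inequality in \eqref{PsDOEstSchatt}; the analysis side gives $\|a\|_{M^{p_2,q_2}_{(\omega_0)}}\asymp\|G\|_{\mathbb U^{p_2,q_2}}\lesssim\|G\|_{\mascI_p}\lesssim\|\op(a)\|_{\mascI_p}$, which is the right embedding and the left inequality. I expect the main obstacle to be the endpoint bookkeeping across the full quasi-Banach range $(0,\infty]$: the interpolation in the $p\ge2$ half of the first embedding must be carried out between genuinely locally convex corners, which is exactly what forces the indices $\min(p,p')$ and $\max(p,p')$, while the duality used for the second embedding degenerates at $p\le1$ (where $\mascI_p^{\,*}=\mascI_\infty$), which both necessitates the separate Cauchy--Schwarz argument and explains the threshold $p_2\ge\max(p,1)$ rather than $p_2\ge p$.
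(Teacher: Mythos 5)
Your proposal is correct, but it takes a substantially more self-contained route than the paper. The paper disposes of the entire range $p\in [1,\infty ]$ (and in particular of the whole second embedding in \eqref{ModSchattenEmb}, which for $p<1$ then follows by nesting into the case $p=1$ since $q_2=\infty$ and $p_2\ge 1$ there) by citing Theorem A.3 in \cite{Toft11} together with Proposition \ref{p1.4A}; the only part it proves directly is the first embedding for $p\in (0,1)$, and there it does exactly what your ``sufficiency'' branch does: reduce to $t=0$, invoke the factorization \eqref{OpaFactorization}, the ideal property \eqref{YoungSchatten}, Theorem \ref{MatrixSchatten} and Lemma \ref{PsDoDisc}\,(1) to get $\nm {\op (a)}{\mascI _p}\lesssim \nm a{M^{p,p}_{(\omega _0)}}$. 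What you add is a from-scratch proof of the cited ingredients at the matrix level: the embedding $\mathbb U^{p,p'}\subseteq \mascI _p$ for $p\ge 2$ by interpolation between $\mathbb U^{2,2}=\mascI _2$ and the Schur-type endpoint $\mathbb U^{\infty ,1}\subseteq \mascI _\infty$, the converse $\mascI _p\subseteq \mathbb U^{p,\max (p,p')}$ by trace-pairing duality for $p>1$ and by the Cauchy--Schwarz bound $\mascI _1\subseteq \mathbb U^{1,\infty}$ for $p\le 1$, and the transfer of the lower bound through the Gabor matrix $G=C_{\psi _1}\circ \op (a)\circ D_{\psi _2}$ together with Proposition \ref{ConseqThmS}\,(2). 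All of this is sound: the interpolation is between genuinely Banach corners, the duality degenerates exactly where you say it does, and the two-sidedness of \eqref{Eq:omegajCond} (an $\asymp$, unlike the one-sided hypothesis of Theorem \ref{thmOpCont}) is precisely what licenses transferring norms in both directions. The trade-off is that your argument is longer but makes the theorem independent of \cite{Toft11}, whereas the paper's proof is a few lines at the cost of an external reference; your observation that the necessity direction should be run through the analysis operator rather than by inverting the synthesis factorization is also the right way to avoid frame-redundancy issues, and is implicitly how the cited Theorem A.3 works.
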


\par

\begin{proof}
We use the same notations as in the proof of Theorem \ref{thmOpCont}.
The result is true for $p\in [1,\infty ]$ in view of Theorem A.3 in
\cite{Toft11} and Proposition \ref{p1.4A}. Hence it suffices to prove the
assertion for $p\in (0,1)$.

By Proposition 1.7 in \cite{Toft12} and its proof, it suffices
to prove the result for $t=0$. 

\par

It follows from \eqref{YoungSchatten} and
\eqref{OpaFactorization} that
\begin{multline*}
\nm {\op (a)}{\mathscr I_p(\omega _1, \omega _2)}
=
\nm {D_{\phi _1} \circ A \circ C_{\phi _1}}{\mascI _p(\omega _1, \omega _2)}
\\[1ex]
\lesssim
\nm {D_{\phi _1}}
{\mascI _\infty {(\ell ^2_{(\omega _2)}(\Lambda ^2 ),M^2_{(\omega _2)})}}
\nm A{\mascI _p {(\ell ^2_{(\omega _1)}(\Lambda ^2 ),
\ell ^2_{(\omega _2)}(\Lambda ^2 ) ) }}
\nm {C_{\phi _2}}
{\mascI _\infty {(M^2_{(\omega _1)},\ell ^2_{(\omega _1)} (\Lambda ^2 )  )}}
\\[1ex]
\asymp 
\nm A{\mascI _p {(\ell ^2_{(\omega _1)}(\Lambda ^2 ),
\ell ^2_{(\omega _2)}(\Lambda ^2 ) ) }}
\lesssim \nm A{\mathbb U^{p,p}(\omega ,\Lambda ^2  )}
\asymp \nm a{M^{p,p}_{(\omega _0)}},
\end{multline*}
and the result follows.
%
%
\end{proof}

\par

\begin{rem}
Theorems \ref{thmOpCont} and \ref{thmOpSchatten} are related
to certain results \cite{MoPf,Pf} when the involved weights are trivial,
and the involved Lebesgue exponents belong the subset $[1,\infty ]$ of
$(0,\infty ]$. More precisely, let $\mabfp \in [1,\infty ]^{4d}$
be given by
$$
\mabfp =(p_1,\dots ,p_1,p_2,\dots ,p_2,q_1,\dots ,q_1,q_2,\dots ,q_2),
$$
and each $p_j$ and $q_j$ occur $d$ times. Then
S. Molahajloo and G. E. Pfander
investigate in \cite{MoPf,Pf}, continuity of pseudo-differential operators
with symbols in $M^{\mabfp}(\rr {2d})$, when acting between
$M^{r_1,s_1}(\rr d)$ and $M^{r_2,s_2}(\rr d)$, for some $r_j,s_j\in [1,\infty ]$
(cf. Theorem 1.3 in \cite{MoPf}).

\par

We note that there are some overlaps between Theorems \ref{thmOpCont}
and \ref{thmOpSchatten} and the results in \cite{MoPf,Pf}. On the other hand,
the results in \cite{MoPf,Pf}, and Theorems \ref{thmOpCont}
and \ref{thmOpSchatten} do not contain each others, since the assumptions
on the symbols are more restrictive in Theorems \ref{thmOpCont}
and \ref{thmOpSchatten}, while the assumptions on domains and image
spaces are more restrictive in \cite{MoPf,Pf}.
\end{rem}

\par

Next we show that Theorem \ref{thmOpSchatten} is optimal
with respect to $p$. More precisely, we have
the following result

\par

\begin{thm}\label{SchattenConverse}
Let $t\in \mathbf R$, $\omega _k\in \mathscr P_{E}(\rr {2d})$,
$k=1,2$, and $\omega _0\in \mathscr P_{E}(\rr {2d}\oplus \rr {2d})$
be such that \eqref{Eq:omegajCond} holds.
Also let $p,q,r\in (0,\infty ]$, and suppose
\begin{equation}\label{SchattenModIncl}
M^{p,q}_{(\omega _0)}(\rr {2d})\subseteq s_{t,r}
(\omega _1, \omega _2).
\end{equation}
Then the following is true:
\begin{enumerate}
\item $p\le r$ and $q\le \min (2,r)$;

\vrum

\item if in addition $\omega _1,\omega _2 \in \mascP (\rr {2d})$
and $r\ge 2$, then $q\le p'$.
\end{enumerate}
\end{thm}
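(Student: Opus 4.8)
The plan is to show that the inclusion \eqref{SchattenModIncl} forces the stated numerology by feeding explicit finite Gabor matrices into the discretisation of Lemma \ref{PsDoDisc}. Since $M^{p,q}_{(\omega _0)}(\rr {2d})$ and $s_{t,r}(\omega _1,\omega _2)$ are quasi-Banach spaces, the closed graph theorem upgrades \eqref{SchattenModIncl} to a continuous embedding, so that $\nm {\op _t(a)}{\mascI _r(\omega _1,\omega _2)}\lesssim \nm a{M^{p,q}_{(\omega _0)}}$ for all $a$. By the calculi transform (Proposition 1.7 in \cite{Toft12}, as in the proofs of Theorems \ref{thmOpCont} and \ref{thmOpSchatten}) I reduce to $t=0$, and then invoke Lemma \ref{PsDoDisc}: with $\Lambda ,\phi _1,\phi _2,\Phi ,\Psi$ as in Lemma \ref{aFrames}, every symbol satisfies $\op (a)=D_{\phi _1}\circ A\circ C_{\phi _2}$ together with $\nm a{M^{p,q}_{(\omega _0)}}\asymp \nm A{\mathbb U^{p,q}(\omega ,\Lambda ^2)}$. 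Thus it suffices to prescribe $A$ as a convenient finitely supported matrix on $\Lambda ^2$, read off $\nm A{\mathbb U^{p,q}}$ from its diagonals, and compute the singular values of the completely explicit operator $D_{\phi _1}AC_{\phi _2}$. For part (1) the weights are harmless: choosing the support of $A$ to be a sparse, widely separated subset of $\Lambda ^2$ lets me absorb the moderate weight $\omega$ at those finitely many nodes into the matrix entries, after which \eqref{omega0omegaRel} and \eqref{Eq:omegajCond} make the weight drop out of the ratio of the two norms.

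For part (1) I would run three test matrices, all supported on separated nodes so that the coherent states $\{ \pi (\mu )\phi _1\}$ and $\{ \pi (\mu )\phi _2\}$ involved form Riesz sequences with bounds close to one. First, a diagonal matrix $A=\operatorname{diag}(c_{\mabfj})$: then $\op (a)=\sum _{\mabfj}c_{\mabfj}(\,\cdot\, ,\pi (\mabfj)\phi _2)\pi (\mabfj)\phi _1$ is diagonal in two near-orthonormal systems, so $\nm {\op (a)}{\mascI _r}\asymp \nm {(c_{\mabfj})}{\ell ^r}$, while only the zero diagonal of $A$ is nonzero, giving $\nm A{\mathbb U^{p,q}}=\nm {(c_{\mabfj})}{\ell ^p}$; the embedding then forces $\ell ^p\subseteq \ell ^r$, i.e.\ $p\le r$. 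Second, a single-row, hence rank one, matrix with row entries $c_{\mabfk}$ lying on pairwise distinct diagonals: $\op (a)$ is rank one with singular value $\asymp \nm {(c_{\mabfk})}{\ell ^2}$, whereas $\nm A{\mathbb U^{p,q}}=\nm {(c_{\mabfk})}{\ell ^q}$, forcing $q\le 2$. Third, a generalised permutation matrix carrying $c_{\mabfk}$ at positions with pairwise distinct rows, columns and diagonals: the pieces are mutually orthogonal, so $\nm {\op (a)}{\mascI _r}\asymp \nm {(c_{\mabfk})}{\ell ^r}$ while again $\nm A{\mathbb U^{p,q}}=\nm {(c_{\mabfk})}{\ell ^q}$, forcing $q\le r$. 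Combining the last two yields $q\le \min (2,r)$.

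For part (2), where $r\ge 2$ and $\omega _1,\omega _2\in \mascP (\rr {2d})$, I would first use $\mascI _r\subseteq \maclB (M^2_{(\omega _1)},M^2_{(\omega _2)})$ to reduce the claim to the sharpness of plain boundedness: it suffices to show that $M^{p,q}_{(\omega _0)}\subseteq \sets {a}{\op (a)\in \maclB (M^2_{(\omega _1)},M^2_{(\omega _2)})}$ implies $q\le p'$. Here I would drop the sparseness and instead choose $A=u\otimes \overline v$ of rank one with unimodular entries on a full $N\times N$ box of $\Lambda ^2$. Every difference (diagonal index) is then attained with multiplicity of order $N$, so $\nm A{\mathbb U^{p,q}}\asymp N^{1/p+1/q}$, while $\op (a_N)$ is a single rank one operator whose operator norm I would bound below by $\gtrsim N$ by testing against a wave packet matched to the block. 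Uniform boundedness then gives $N\lesssim N^{1/p+1/q}$ for all $N$, i.e.\ $1/p+1/q\ge 1$, which is exactly $q\le p'$. The polynomial hypothesis on $\omega _1,\omega _2$ keeps the whole construction inside the tempered framework (Proposition \ref{p1.4A}(3)) and, through \eqref{Eq:omegajCond} and \eqref{omega0omegaRel}, renders the weight separable in the two box variables and of sub-polynomial order relative to the leading power of $N$, so that it cancels in the comparison.

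The main obstacle is exactly the operator-norm lower bound in part (2). On the dense box the systems $\{ \pi (\mu )\phi _j\}$ are genuine (redundant) Gabor frames rather than Riesz sequences, so the clean equivalence $\nm {D_{\phi}v}{L^2}\asymp \nm v{\ell ^2}$ used throughout part (1) is no longer available, and a generic unimodular $v$ could lie near the kernel of the synthesis map. I would resolve this by selecting the unimodular pattern as a character (discrete Fourier) array and pairing $\op (a_N)$ with a single, explicitly chosen coherent state, for which the coherent superposition defining the range vector can be summed and shown to have norm of order $N^{1/2}$; tracking the polynomial weight uniformly over the box of radius $\sim N$ is the accompanying bookkeeping. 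Everything in part (1), by contrast, is routine once the near-orthogonality of widely separated time-frequency shifts is recorded.
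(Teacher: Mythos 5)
Your route is genuinely different from the paper's. The paper argues on the symbol side: for $q\le r$ and $p\le r$ it builds symbols of the form $\sum _{\mabfk}c(\mabfk )\phi (X)e^{-2i\sigma (X,\mabfk )}$ in the symplectic modulation spaces $\splM ^{p,q}_{(\omega )}$, uses the invariance $\mascF _\sigma (s_p^w)=s_p^w$ (Lemma \ref{identification1}) to swap the roles of $p$ and $q$, handles $r<1$ by real interpolation against $\mascI _2=\splM ^{2,2}$, gets $q\le 2$ from the rank-one Wigner construction of Corollary \ref{cor4.2}, and disposes of part (2) by citing \cite[Corollary 3.5]{GH1} plus the lifting theory of \cite{GrochToft1}. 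Your matrix-side tests (sparse diagonal, single row, generalized permutation, dense rank-one block) are a legitimate alternative; notably they treat $r<1$ directly without interpolation and would make part (2) self-contained. But two points need attention.

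First, you cannot ``prescribe $A$ as a convenient finitely supported matrix'': the matrix in Lemma \ref{PsDoDisc} consists of samples of $V_\Psi a$, which obey reproducing-kernel constraints, so the map $a\mapsto A$ is not onto finitely supported matrices, and the two-sided equivalence $\nm a{M^{p,q}_{(\omega _0)}}\asymp \nm A{\mathbb U^{p,q}(\omega ,\Lambda ^2)}$ is only available for that canonical matrix. The correct move is to prescribe \emph{synthesis} coefficients, $a=\sum \gamma (\mabfj ,\mabfk )\Phi _{\mabfj ,\mabfk}$, for which one has only the one-sided bound $\nm a{M^{p,q}_{(\omega _0)}}\lesssim \nm \gamma {}$ together with the explicit expansion $\op (a)=\sum \gamma (\mabfj ,\mabfk )\op (\Phi _{\mabfj ,\mabfk})$ into rank-one time-frequency shift operators (solving $j=m_1$, $k=n_1-m_1$, $\iota =n_2$, $\kappa =m_2-n_2$ to place an entry at a desired output/input pair). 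Since you only ever need an upper bound on the modulation norm and a lower bound on the Schatten or operator norm, this one-sided version suffices and your three tests in part (1) go through; but as written the argument conflates the canonical and the prescribed coefficients.

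Second, the operator-norm lower bound in part (2) is the actual mathematical content there and is left as a sketch. For the rank-one block $\gamma =u\otimes \overline v$ you must produce unimodular $u,v$ on a box of $N^{2d}$ nodes with $\nm {D_{\phi _j}u}{L^2}\gtrsim \nm u{\ell ^2}=N^{d}$; since the Gabor system is an overcomplete frame, $D_{\phi _j}$ has a large kernel and a generic unimodular vector fails. The all-ones choice works when $\sum _{\mabfk \in \Lambda ^2}V_{\phi}\phi (\mabfk )\neq 0$ (e.g.\ Gaussian windows on a fine lattice, where the Gram sums are positive), and one must then track the polynomial weights uniformly over the box; none of this is carried out. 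You correctly identify this as the obstacle, but until it is done, part (2) rests on an unproven step, whereas the paper closes it by citation.
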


\par

We need some preparations for the proof.
The following result concerning Wigner distributions
extends \cite[Proposition A.4]{Toft11} to involve Lebesgue
exponents smaller than one (cf. \eqref{wignertdef}).
We omit the proof
since the arguments are the same as in the proof of
\cite[Proposition A.4]{Toft11}. (See also \cite{Gc2,Toft5} and the references
therein for related results.)

\par

\begin{prop}\label{t-WignerMod}
Let $t\in \mathbf R$, and let $p_j,q_j,p,q\in (0,\infty ]$ be such that
$p\le p_j,q_j\le q$, for $j=1,2$, and 
\begin{equation}\label{(A.8)}
1/p_1+1/p_2=1/q_1+1/q_2=1/p+1/q.
\end{equation}
Also let $\omega _1,\omega _2\in \mascP _E(\rr {2d})$ and
$\omega \in \mascP _E(\rr {2d}\oplus \rr {2d})$ be such that
\begin{equation}\label{(A.9)}
\omega _0( (1-t)x+ty,t\xi +(1-t)\eta ,\xi -\eta ,y-x )
\lesssim
\omega _1(x,\xi )\omega _2(y,\eta ).
\end{equation}
Then the map $(f_1,f_2)\mapsto W_{f_1,f_2}^t$ from $\maclS _{1/2}'(\rr
d)\times \maclS _{1/2}'(\rr d)$ to $\maclS _{1/2}'(\rr {2d})$ restricts to a
continuous mapping from $M^{p_1,q_1}_{(\omega _1)}(\rr d)\times
M^{p_2,q_2}_{(\omega _2)}(\rr d)$ to $M^{p,q}_{(\omega _0
)}(\rr {2d})$, and
\begin{equation}\label{(A.10)}
\nm {W_{f_1,f_2}^t}{M^{p,q}_{(\omega _0)}}
\lesssim
\nm {f_1}{M^{p_1,q_1}_{(\omega _1)}} \nm
{f_2}{M^{p_2,q_2}_{(\omega _2)}}
\end{equation}
when $f_1,f_2\in \maclS _{1/2}'(\rr d)$.
\end{prop}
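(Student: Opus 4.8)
The plan is to reduce the statement to the Weyl case $t=1/2$ and then to a known factorization of the Wigner distribution in terms of short-time Fourier transforms. First I would observe that, by the covariance relation \eqref{calculitransform} together with Proposition 1.7 in \cite{Toft12} (which trades the parameter $t$ against the metaplectic operator $e^{i(t_1-t_2)\scal{D_x}{D_\xi}}$, an isomorphism on the relevant modulation spaces), it suffices to treat a single fixed value of $t$; I would pick $t=1/2$, i.e. the ordinary cross-Wigner distribution $W_{f_1,f_2}$. The condition \eqref{(A.9)} is exactly engineered so that it is preserved under this reduction, since the argument $((1-t)x+ty,t\xi+(1-t)\eta,\xi-\eta,y-x)$ is the affine change of variables that intertwines the $t$-Wigner and the $1/2$-Wigner distributions.

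Next I would use the well-known identity expressing the cross-Wigner distribution as a short-time Fourier transform of $f_1$ with window $f_2$, up to a linear change of phase-space variables and a unimodular factor: schematically $W_{f_1,f_2}(x,\xi)$ equals $V_{g}f_1$ evaluated at a linearly transformed point, where $g=\check f_2$ (reflection) and the transformation is the same affine map appearing in \eqref{(A.9)}. This converts the bilinear estimate \eqref{(A.10)} into an estimate for $V_{f_2}f_1$, i.e. into a genuinely Gabor-analytic statement. At this level the weight hypothesis \eqref{(A.9)} becomes precisely the submultiplicativity-type bound needed to absorb $\omega_0$ into the product $\omega_1\otimes\omega_2$ after the change of variables.

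The heart of the matter is then a mixed-quasi-norm estimate for $V_\phi(V_{f_2}f_1)$, where $\phi$ is a fixed window in $\Sigma_1$. Here I would invoke the standard computation (as in \cite{Toft5}, and whose non-locally-convex extension is exactly the kind handled by the Gabor machinery of \cite{GaSa,Toft12}) that bounds the modulation-space norm of $W_{f_1,f_2}$ by a convolution-type expression in $V_{\phi_1}f_1$ and $V_{\phi_2}f_2$. The exponent bookkeeping \eqref{(A.8)} together with $p\le p_j,q_j\le q$ is precisely what makes the relevant Hölder and Young (quasi-)inequalities close: since all exponents lie in $(0,\infty]$ one must use the quasi-norm form of Young's inequality valid below $1$ (cf. Corollary 2.2 in \cite{Toft12}), which is where the paper's non-local-convex framework is essential.

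The step I expect to be the main obstacle is controlling the constants uniformly when $p,q<1$: the triangle inequality fails, so every interchange of integration order and every factorization of the product window must be justified through the quasi-norm versions of Minkowski's and Young's inequalities rather than their classical counterparts. Concretely, the delicate point is to show that the convolution relation obtained from the Gabor expansion still yields \eqref{(A.10)} with a finite constant, using the factorization of the window $W_{\phi_1,\phi_2}$ and the density of $\Sigma_1$; this is exactly the place where one relies on the windows lying in $M^r_{(v)}$ for all $r>0$, so that all the auxiliary seminorms are finite. Since, as noted, the argument is word-for-word that of \cite[Proposition A.4]{Toft11} with $[1,\infty]$ replaced by $(0,\infty]$ throughout, I would carry out the reduction and then cite that proof, flagging only the three or four inequalities that must be upgraded to their quasi-norm analogues.
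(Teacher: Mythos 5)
Your proposal matches the paper's own treatment: the paper omits the proof, stating only that the arguments are the same as those of \cite[Proposition A.4]{Toft11}, and your outline of that argument --- the factorization $V_{\Phi}\bigl(W^t_{f_1,f_2}\bigr)=V_{\phi _1}f_1\cdot \overline{V_{\phi _2}f_2}$ composed with a linear change of variables, where $\Phi =W^t_{\phi _1,\phi _2}$, followed by the H{\"o}lder--Young bookkeeping dictated by \eqref{(A.8)} --- is the correct skeleton of that proof. One caution on your phrase ``quasi-norm form of Young's inequality valid below $1$'': no such inequality exists for $L^{p}(\rr {2d})$ with $p<1$ (only its discrete $\ell ^p$ analogue survives), and the actual mechanism that carries the $[1,\infty ]$ argument over to $(0,\infty ]$ is the hypothesis $p\le p_j,q_j\le q$, which lets one raise the pointwise identity to the power $p$ so that every resulting H{\"o}lder, Young and Minkowski exponent is at least $1$ and the classical inequalities apply.
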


\par

We have now the following extension of Corollary 4.2 (1) in \cite{Toft2}.

\par

\begin{cor}\label{cor4.2}
Let $p\in (0,\infty ]$, $q\in (2,\infty ]$, $t\in \mathbf R$, and let
$\omega _2\in \mascP _E(\rr {2d})$ and
$\omega _0\in \mascP _E(\rr {4d})$ be such that
$$
\omega _0((1-t)x,t\xi ,\xi ,-x)\lesssim \omega _2(x,\xi ).
$$
Then there is an element $a$ in $M^{p,q}_{(\omega _0)}(\rr {2d})$
such that $\op _t(a)$ is not continuous from $\maclS _{1/2}(\rr d)$
to $M^{2,2}_{(\omega _2)}(\rr d)$.
\end{cor}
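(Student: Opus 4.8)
The plan is to realize $a$ as a $t$-Wigner distribution, turning $\op _t(a)$ into a rank-one operator whose single range vector is arranged to lie outside $M^{2,2}_{(\omega _2)}(\rr d)$. Recall from \eqref{trankone} and \eqref{wignertdef} that $\op _t(W_{f_1,f_2}^t)g=(2\pi )^{-d/2}(g,f_2)f_1$ for $f_1,f_2\in \maclS _{1/2}'(\rr d)$. First I would fix $f_2\in \maclS _{1/2}(\rr d)\setminus 0$; since $\maclS _{1/2}\subseteq \Sigma _1$, Proposition \ref{p1.4A} (1) places $f_2$ in every weighted modulation space with weight in $\mascP _E$. Next I would choose $f_1\in M^{q,q}_{(\omega _2)}(\rr d)\setminus M^{2,2}_{(\omega _2)}(\rr d)$; such an $f_1$ exists because $q>2$ makes the same-weight inclusion $M^{q,q}_{(\omega _2)}\subseteq M^{2,2}_{(\omega _2)}$ fail, and one may produce it as a Gabor synthesis (Proposition \ref{ConseqThmS}) of a sequence in $\ell ^{q,q}_{(\omega _2)}\setminus \ell ^{2,2}_{(\omega _2)}$. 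The weight attached to $f_1$ is exactly the target weight $\omega _2$, which is what makes the construction bite.

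It remains to verify $a=W_{f_1,f_2}^t\in M^{p,q}_{(\omega _0)}(\rr {2d})$, and Proposition \ref{t-WignerMod} is the tool. Using the embedding $M^{\min (p,q),q}_{(\omega _0)}\subseteq M^{p,q}_{(\omega _0)}$ from Proposition \ref{p1.4B} (2), I may replace $p$ by $\min (p,q)$ and hence assume $p\le q$; then the exponents $(p_1,q_1,p_2,q_2)=(q,q,p,p)$ satisfy $p\le p_j,q_j\le q$ and the balance \eqref{(A.8)}. The genuine technical point is the weight bookkeeping: \eqref{(A.9)} demands a two-factor bound in all four phase-space variables, whereas the hypothesis controls $\omega _0$ only on the slice $y=\eta =0$. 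I would recover the full bound from moderateness \eqref{moderate}: writing $\omega _0$ as $v$-moderate for a submultiplicative $v\in \mascP _E(\rr {2d}\oplus \rr {2d})$ and splitting the argument as $((1-t)x,t\xi ,\xi ,-x)+(ty,(1-t)\eta ,-\eta ,y)$, the hypothesis gives
\[
\omega _0((1-t)x+ty,t\xi +(1-t)\eta ,\xi -\eta ,y-x)\lesssim \omega _2(x,\xi )\, \omega _b(y,\eta ),
\]
where $\omega _b(y,\eta )\equiv v(ty,(1-t)\eta ,-\eta ,y)\in \mascP _E(\rr {2d})$. This is precisely \eqref{(A.9)} with $f_1$-weight $\omega _2$ and $f_2$-weight $\omega _b$, and since $f_2\in \Sigma _1$ lies in $M^{p,p}_{(\omega _b)}$, Proposition \ref{t-WignerMod} yields $\nm a{M^{p,q}_{(\omega _0)}}\lesssim \nm {f_1}{M^{q,q}_{(\omega _2)}}\nm {f_2}{M^{p,p}_{(\omega _b)}}<\infty$.

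Finally, taking $g=f_2$ in the rank-one formula gives $\op _t(a)f_2=(2\pi )^{-d/2}\nm {f_2}{L^2}^2\, f_1\notin M^{2,2}_{(\omega _2)}(\rr d)$, so $\op _t(a)$ does not even map $\maclS _{1/2}$ into $M^{2,2}_{(\omega _2)}$, a fortiori it is not continuous between these spaces; here $a\in M^{p,q}_{(\omega _0)}\subseteq \Sigma _1'\subseteq \maclS _{1/2}'$ guarantees that $\op _t(a)$ and the pairing $(f_2,f_2)=\nm {f_2}{L^2}^2\neq 0$ are meaningful. I expect the main obstacle to be exactly the weight step — converting the one-slice control of $\omega _0$ into the product bound \eqref{(A.9)} via the auxiliary weight $\omega _b$ — together with keeping the Wigner exponents inside $[\,\min (p,q),q\,]$; once $\omega _b$ is in hand, the remaining verifications are routine.
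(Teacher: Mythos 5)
Your proposal is correct and follows essentially the same route as the paper: both realize $a$ as a $t$-Wigner distribution so that $\op _t(a)$ is rank one with range spanned by a function in $M^{q,\cdot}_{(\omega _2)}\setminus M^{2,2}_{(\omega _2)}$, and both verify $a\in M^{p,q}_{(\omega _0)}$ via Proposition \ref{t-WignerMod} after upgrading the one-slice hypothesis on $\omega _0$ to the product bound \eqref{(A.9)} by moderateness (the paper absorbs the correction into an exponential weight $e^{c(|x|+|\xi |)}$ on the smooth factor, you into $\omega _b$ built from the moderating function $v$). Your explicit reduction to $p\le \min (p,q)$ before applying Proposition \ref{t-WignerMod} is a small tidiness the paper leaves implicit, but the argument is the same.
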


\par

\begin{proof}
Let $a=W ^t_{f_2,f_1}$,
where $f_1\in \Sigma _1(\rr d)\setminus 0$ and $f_2\in
M^{q,p}_{(\omega _2)}(\rr d)\setminus M^{2,2}_{(\omega _2)}(\rr {2d})$.
Such choices of $f_2$ are possible in view of Proposition
\ref{ConseqThmS}.

\par

By using the fact that $\omega _0$ and $\omega _2$ are moderate
weights, it follows that \eqref{(A.9)} holds when $\omega _1(x,\xi )
=e^{c(|x|+|\xi |)}$, and the constant $c>0$ is chosen large enough.
By Proposition \ref{p1.4A}, it follows that $f_1\in
M^{p,q}_{(\omega _1)}(\rr d)$. Hence
$a\in M^{p,q}_{(\omega _0)}(\rr {2d})$ in view of Proposition
\ref{t-WignerMod}.

\par

On the other hand, if $f\in \maclS _{1/2}(\rr d)\setminus 0$ is chosen
such that $f$ and $f_1$ are not orthogonal, then
$$
\op _t(a)f = (f,f_1)\cdot f_2\in
M^{q,p}_{(\omega _2)}(\rr d)\setminus M^{2,2}_{(\omega _2)}(\rr {2d}),
$$
and the result follows.
\end{proof}

\par

We also need the following lemma. We omit the proof since the
result is a special case of Proposition 4.3 in \cite{Toft12}. Here
$\check f$ is defined as $\check f(x) = f(-x)$ when $f\in \maclS _{1/2}'(\rr d)$,
and recall from Subsection \ref{subsec1.8} that
$(\mascF _\sigma a)(X) = 2^d\widehat a(-2\xi ,2x)$ when $X=(x,\xi )
\in \rr {2d}$.

\par

\begin{lemma}\label{identification1}
Let $\omega _1,\omega _2\in \mascP _E(\rr {2d})$, $a\in
\mathscr S'(\rr{2d})$, and that $p\in (0,\infty ]$. Then
$$
\mascF _\sigma (s_p^w(\omega _1,\omega _2))
=
s_p^w(\omega _1,\check \omega _2).
$$
\end{lemma}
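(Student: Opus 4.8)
The plan is to reduce the claimed identity to a single operator relation, namely that passing from $a$ to $\mascF _\sigma a$ amounts, on the level of Weyl operators, to composing $\op ^w(a)$ with the parity operator, and then to observe that parity is an isometric isomorphism interchanging the weights $\omega _2$ and $\check \omega _2$ on the Hilbert spaces $M^2_{(\omega _2)}$ and $M^2_{(\check \omega _2)}$. Once these two facts are in place, the statement follows by a soft singular-value argument together with the fact that $\mascF _\sigma$ is an involution. Throughout I let $\mathcal P$ denote the parity operator $(\mathcal Pg)(x)=g(-x)=\check g(x)$.

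First I would establish the operator identity
\[
\op ^w(\mascF _\sigma a) = \mathcal P\circ \op ^w(a),\qquad a\in \maclS _{1/2}'(\rr {2d}).
\]
To do this I compute distribution kernels by means of \eqref{atkernel} with $t=1/2$. Writing $K_{a}=K_{a,1/2}$, the Weyl kernel is $K_a(x,y)=(2\pi )^{-d}\int a(\tfrac{x+y}2,\xi )e^{i\scal {x-y}\xi }\, d\xi$, and conversely $a(z,\xi )=\int K_a(z+\tfrac w2,z-\tfrac w2)e^{-i\scal w\xi }\, dw$. Inserting the latter into the definition of $\mascF _\sigma$ from Subsection \ref{subsec1.8} and carrying out the (distributional) $\eta$- and $\xi$-integrations, each of which produces a Dirac factor, I expect to find that the Weyl kernel of $\mascF _\sigma a$ equals $K_a(-x,y)$. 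This is exactly the kernel of $\mathcal P\circ \op ^w(a)$, giving the displayed identity for $a\in \mascS (\rr {2d})$; the extension to all $a\in \maclS _{1/2}'(\rr {2d})$ is then immediate since $a\mapsto K_{a,1/2}$ and $\mascF _\sigma$ are homeomorphisms on $\maclS _{1/2}'$ (cf. Remark \ref{BijKernelsOps}). This kernel bookkeeping, with its several Fourier, dilation and reflection conventions, is the step most likely to hide sign and constant errors, so it is where I would be most careful; alternatively one may simply invoke Proposition 4.3 in \cite{Toft12}, of which the identity is a special case.

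Next I would record that $\mathcal P$ is an isometric isomorphism from $M^2_{(\omega _2)}(\rr d)$ onto $M^2_{(\check \omega _2)}(\rr d)$. Choosing the window $\phi$ in \eqref{defstft} to be even (for instance a Gaussian in $\Sigma _1(\rr d)$) and changing variables $y\mapsto -y$ in the defining integral gives $V_\phi (\check f)(x,\xi )=V_\phi f(-x,-\xi )$. Substituting $(x,\xi )\mapsto (-x,-\xi )$ in the $L^2$-integral then yields $\nm {\check f}{M^2_{(\omega )}}=\nm f{M^2_{(\check \omega )}}$ for every $\omega \in \mascP _E(\rr {2d})$, and since $\mathcal P^2=\mathrm{id}$ it follows that $\mathcal P\colon M^2_{(\omega _2)}\to M^2_{(\check \omega _2)}$ is a surjective isometry.

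Finally I would combine the two facts. Since $\mathcal P$ is an isometric isomorphism of the target spaces, composing with it leaves every singular value unchanged: for any $T_0$ of rank $\le j-1$ the operator $\mathcal PT_0$ has the same rank and $\nm {\mathcal P(T-T_0)}{\maclB (M^2_{(\omega _1)},M^2_{(\check \omega _2)})}=\nm {T-T_0}{\maclB (M^2_{(\omega _1)},M^2_{(\omega _2)})}$, and symmetrically via $\mathcal P^{-1}=\mathcal P$ (one may equally invoke \eqref{YoungSchatten} with an $\mascI _\infty$ factor). Hence $\nm {\op ^w(\mascF _\sigma a)}{\mascI _p(\omega _1,\check \omega _2)}=\nm {\op ^w(a)}{\mascI _p(\omega _1,\omega _2)}$, which says precisely that $a\in s_p^w(\omega _1,\omega _2)$ if and only if $\mascF _\sigma a\in s_p^w(\omega _1,\check \omega _2)$, with equality of quasi-norms. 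Applying this equivalence to $\mascF _\sigma b$ in place of $a$ and using $\mascF _\sigma ^2=\mathrm{id}$ turns it into $b\in \mascF _\sigma (s_p^w(\omega _1,\omega _2))\iff b\in s_p^w(\omega _1,\check \omega _2)$, which is the asserted set equality.
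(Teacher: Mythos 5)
Your proof is correct. The paper itself gives no argument here -- it simply declares the lemma a special case of Proposition 4.3 in \cite{Toft12} -- so what you have written is the direct proof that the paper delegates to the reference, and it is the standard one. The key kernel identity checks out: with $K_a=K_{a,1/2}$ as in \eqref{atkernel} and the normalization $(\mascF _\sigma a)(X)=2^d\widehat a(-2\xi ,2x)$ from Subsection \ref{subsec1.8}, one indeed finds $K_{\mascF _\sigma a}(x,y)=K_a(-x,y)$, i.{\,}e. $\op ^w(\mascF _\sigma a)=\mathcal P\circ \op ^w(a)$; your computation of $V_\phi \check f(x,\xi )=V_\phi f(-x,-\xi )$ for even $\phi$ and the resulting isometry $\mathcal P\colon M^2_{(\omega _2)}\to M^2_{(\check \omega _2)}$ are also right, and the passage from the quasi-norm identity to the set equality via $\mascF _\sigma ^2=\mathrm{id}$ and $\check{\check \omega}_2=\omega _2$ is clean. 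Two cosmetic remarks: since the weights are only in $\mascP _E$, the natural ambient space is $\maclS _{1/2}'(\rr {2d})$ rather than $\mascS '(\rr {2d})$ (you already work there, which is the right choice), and the singular-value argument could be compressed by citing \eqref{YoungSchatten} with an $\mascI _\infty$ factor in both directions, as you note parenthetically.
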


\par

\begin{proof}[Proof of Theorem \ref{SchattenConverse}]
We may assume that $t=1/2$, and consider first the case when $1\le r$.
Let $\splM ^{p,q}_{(\omega )}(\rr {2d})$ and $\splW ^{p,q}_{(\omega )}(\rr {2d})$
be the modulation spaces when the symplectic Fourier transform
is used instead of the ordinary Fourier transform in the definition of
modulation space norms of $M^{p,q}_{(\omega )}(\rr {2d})$ and
$W^{p,q}_{(\omega)}(\rr {2d})$, respectively.
Then \eqref{SchattenModIncl} is equivalent to
$$
\op ^w(\splM ^{p,q}_{(\omega )}) \subseteq \mascI _r
(\omega _1,\omega _2),
\quad \text{when}\quad 
\frac {\omega _2(X-Y)}{\omega _1(X+Y)} \asymp \omega (X,Y)
$$
(see e.{\,}g. \cite{Toft11}).
Let $\phi \in \Sigma (\rr {2d})$ and $\Lambda$ in \eqref{LambdaDef}
be chosen such that $\{ \phi (  \cdo -\mabfj )e^{-2i\sigma (\cdo ,\mabfk )}\}
_{\mabfj ,\mabfk \in \Lambda ^2}$ is a Gabor frame. Also let
$\vartheta (\mabfk )=\omega (0,\mabfk )$,
$c\in \ell ^\infty _{(\vartheta )}(\Lambda ^2)$, $c_0(0,\mabfk )
=c(\mabfk )$ and $c_0(\mabfj ,\mabfk ) = 0$ when $\mabfj \neq 0$,
and let
$$
a(X) \equiv \sum _{\mabfk \in \Lambda ^2}c(\mabfk )\phi (X)
e^{-2i\sigma (X,\mabfk )}
=
\sum _{\mabfj ,\mabfk \in \Lambda ^2}c_0(\mabfj ,\mabfk )
\phi (X-\mabfj )e^{-2i\sigma (X,\mabfk )}.
$$
Then $a\in \splM ^{p,\infty}_{(\omega )}(\rr {2d})$
for every $p\in (0,\infty ]$. Furthermore,
\begin{equation}\label{aDiffModEq}
a\in \splM ^{p,q}_{(\omega )}
\quad \Longleftrightarrow \quad
a\in \splW ^{p,q}_{(\omega )}
\quad \Longleftrightarrow \quad
c\in \ell ^q_{(\vartheta )}
\end{equation}
holds for every $p,q\in (0,\infty ]$.

\par

Now if $q>r$, then choose $c\in \ell ^q_{(\vartheta )}\setminus \ell
^r_{(\vartheta )}$, and it follows from \eqref{ModSchattenEmb}
and \eqref{aDiffModEq} that $a\in \splM ^{p,q}_{(\omega )}
\setminus s^w_r(\omega _1,\omega _2)$. This shows
that $q\le r$ when \eqref{SchattenModIncl} holds.

\par

Assume instead that $p>r$, let $q\in (0,\infty ]$ be arbitrary, choose
$c\in \ell ^p_{(\vartheta )}\setminus \ell ^r_{(\vartheta )}$, and consider
$$
b=\mathscr F_\sigma a \in \mathscr F_\sigma \splW ^{q,p}_{(\omega )}
= \splM^{p,q}_{(\omega _{T})},\qquad \omega _{T}(X,Y)= \omega (Y,X).
$$
By
Lemma \ref{identification1}, \eqref{ModSchattenEmb} and \eqref{aDiffModEq}
it follows that $b\notin s^w_r(\omega _1,\check \omega _2)$.
This shows that $p\le r$ when \eqref{SchattenModIncl} holds,
and the result follows in the
case $r\ge 1$.

\par

Next assume that $r<1$. If \eqref{SchattenModIncl} holds for some
$q>r$, then it follows by (real) interpolation between the cases
\eqref{SchattenModIncl} and
$$
\op ^2(\splM ^{2,2}_{(\omega )}(\rr {2d}))= \mascI _2(M^2_{(\omega _1)}(\rr d),
M^2_{(\omega _2)}(\rr d))
$$
that \eqref{SchattenModIncl} holds for $r=1$ and some $q>1$. This contradicts
the first part of the proof. If instead \eqref{SchattenModIncl} holds for some
$p>r$, then it again follows by interpolation that \eqref{SchattenModIncl}
holds for $r=1$ and some $p>1$, which again contradicts
the first part of the proof. This shows that $p,q\le r$ if \eqref{SchattenModIncl}
should hold. Furthermore, by Corollary \eqref{cor4.2} it follows that
$q\le 2$ when \eqref{SchattenModIncl} holds, and (1) follows.

\par

It remains to prove (2). By \cite[Corollary 3.5]{GH1} it follows that the result
is true for trivial weights in the modulation space norms, and the result
is carried over to the case with non-trivial weights by using lifting properties,
established in \cite{GrochToft1}. The proof is complete.
\end{proof}

\par

\section{Applications to the H{\"o}rmander-Weyl calculus}
\label{sec4}

\par

In this section we apply the results in the previous section to deduce
Schatten-von Neumann properties in the H{\"o}rmander-Weyl calculus. (See
e.{\,}g. \cite{Le,Toft4}, Sections 18.4--18.6 in \cite{Ho1} and Subsection
\ref{subsec1.8} for approaches or notations.)

\par

The following result is a consequence of
Theorem 4.4 (1) in \cite{Toft4} in the case $p\ge 1$, while
the latter result do not touch the case when $p<1$.

\par

\begin{thm}\label{thm:WeylHorm1}
Let $p\in (0,2]$, $g$ be feasible on $W$, and let $m$ be a
$g$-continuous weight on $W$
such that $m\in L^p(W)$. Then $S(m,g)\subseteq s_p^w(W)$.
\end{thm}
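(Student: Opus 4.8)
The plan is to reduce the statement $S(m,g)\subseteq s_p^w(W)$ for $p\in(0,2]$ to the modulation-space Schatten embedding $M^{p,p}_{(\omega_0)}\subseteq s_{1/2,p}$ supplied by Theorem \ref{thmOpSchatten} (its sufficiency half, with $p_1=q_1=p$), via the classical fact that H\"ormander symbol spaces embed into weighted modulation spaces. More precisely, I would invoke the known inclusion
\begin{equation*}
S(m,g)\subseteq M^{p,p}_{(\omega_0)}(W)
\end{equation*}
for an appropriate weight $\omega_0\in\mascP_E(\rr{2d}\oplus\rr{2d})$ built from $m$, after which the theorem follows immediately from Theorem \ref{thmOpSchatten}. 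This is exactly the strategy underlying the $p\ge 1$ case in Theorem 4.4 (1) of \cite{Toft4}, and the point of the present statement is that nothing in this reduction genuinely requires local convexity.

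First I would fix the weight. Taking the Weyl quantization ($t=1/2$) and $\omega_1=\omega_2=1$, condition \eqref{Eq:omegajCond} forces $\omega_0$ to satisfy
\begin{equation*}
\omega_0\Big(\tfrac{x+y}2,\tfrac{\xi+\eta}2,\xi-\eta,y-x\Big)\asymp 1,
\end{equation*}
so the relevant symbol weight is essentially constant in the ``phase'' directions and the embedding $S(m,g)\subseteq M^{p,p}$ is the one to verify, with $m$ controlling the decay that makes $m\in L^p(W)$ the right hypothesis. The second step is to establish $S(m,g)\subseteq M^{p,p}$: one estimates the STFT $V_\Phi a$ of a symbol $a\in S(m,g)$ against a Gaussian-type window $\Phi\in\Sigma_1$, using the slowly varying and $g$-continuous structure of $(m,g)$ together with $h_g\le 1$ (feasibility) to obtain a pointwise bound $|V_\Phi a(X,\Xi)|\lesssim m(X)\langle\Xi\rangle^{-M}$ for every $M$. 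Since $m\in L^p(W)$ and the rapid decay in $\Xi$ handles the frequency integration for any $p>0$, this yields $\nm a{M^{p,p}}\lesssim \nm a{S(m,g)}^{\text{(finite order)}}$, with the seminorm order depending on $p$ and the dimension. Crucially, these STFT estimates are purely pointwise and do not use duality or convexity, so they remain valid in the quasi-Banach range $p<1$.

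Once $S(m,g)\subseteq M^{p,p}$ is in hand, Theorem \ref{thmOpSchatten} with $p_1=q_1=p$ (which is admissible since $p\le p$, $p\le\min(p,p')$ when $p\le 2$ forces $p'\ge 2\ge p$, so $p\le\min(p,p')$ holds) gives $M^{p,p}_{(\omega_0)}\subseteq s_{1/2,p}(1,1)=s_p^w(W)$, completing the proof. \textbf{The main obstacle} I anticipate is the second step: verifying the decay estimate for $V_\Phi a$ uniformly over $S(m,g)$ in the genuinely metric-dependent setting, where $g$ need not be constant. The feasibility condition $h_g\le 1$ is exactly what prevents the oscillatory phase from being swamped by the derivatives' growth, and getting the $\langle\Xi\rangle^{-M}$ decay requires carefully integrating by parts against the window using the control on $|a|_k^g$. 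I expect this to be the technical heart, but since it is a pointwise/$L^p$ argument rather than a dual-space one, the passage from $p\ge 1$ to $p\in(0,1)$ should be essentially free once the estimate is written out — which is presumably why the author can defer to the arguments already present in \cite{Toft4}.
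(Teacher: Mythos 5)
Your overall strategy --- feed a modulation space embedding into Theorem \ref{thmOpSchatten} with $p_1=q_1=p$ --- is the right engine, and the paper does use exactly that theorem (together with Theorem 3.1 in \cite{GaSa}) at the corresponding point. But the step you flag as the ``technical heart'' contains a genuine gap: for a general feasible metric $g$ there is \emph{no} global embedding $S(m,g)\subseteq M^{p,p}(W)$ with a fixed window, and the claimed pointwise bound $|V_\Phi a(X,\Xi )|\lesssim m(X)\eabs \Xi ^{-M}$ is false. The symbol estimates $|a|_k^g(X)\le \nm a{m,N}^g\, m(X)$ control derivatives only in $g_X$-normalized directions; in fixed coordinates one only gets $|\partial ^\alpha a(X)|\lesssim m(X)\prod _j g_X(e_{i_j})^{1/2}$, and $g_X(e_i)$ need not be bounded in $X$. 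For instance $g_X=\eabs x^2\, |dy|^2+\eabs x^{-2}\, |d\eta |^2$ is symplectic (hence feasible) and slowly varying, yet $g_X(e_1)=\eabs x^2\to \infty$, so $S(m,g)$ admits symbols oscillating in $x$ with local frequency of order $\eabs x$; their STFT against a fixed window has mass at $|\Xi |\sim \eabs x$ and cannot decay like $\eabs \Xi ^{-M}$ uniformly. Integration by parts in fixed coordinates only yields $|V_\Phi a(X,\Xi )|\eabs \Xi ^{2N}\lesssim m(X)\eabs x^{2N}$ here, which is useless after integrating against $m\in L^p$. Feasibility ($h_g\le 1$) bounds the eigenvalues $\lambda _k(X)$ in \emph{metric-adapted} symplectic coordinates, not the metric in fixed coordinates, so it does not rescue the estimate.

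The paper's proof avoids this by localizing: after reducing to the symplectic intermediate metric $g^{{}_0}$, it takes a partition of unity $\{ \fy _j\}$ adapted to $g$ (Remarks 2.3--2.4 in \cite{Toft4}), and for each piece $a_j=\fy _ja$ it chooses symplectic coordinates in which $g_{X_j}$ is Euclidean. In those coordinates the derivative bounds are uniform in $j$ (by slow variation and $g$-continuity of $m$), the support is a fixed-size ball, and Theorem \ref{thmOpSchatten} plus the STFT/integration-by-parts estimate gives $\nm {a_j}{s_p^w}^p\lesssim \int m(Z)^p\fy _j(Z)\, dZ$. The pieces are then reassembled with the $p$-triangle inequality for $\mascI _p$, $p\le 1$ (Corollary \ref{prop:sumOpsSchatten2}), which produces exactly $\sum _j\int m^p\fy _j=\nm m{L^p}^p$; the case $p\in (1,2]$ is already covered by \cite{Toft4}. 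Both the metric-adapted localization and the quasi-norm summation lemma are essential ingredients absent from your proposal; without them the reduction to a single global modulation space estimate does not close. (Your argument would be fine for a constant metric, e.g. $S(m,|dX|^2)$, where $m$ is then automatically moderate and the fixed-window bound does hold.)
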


\par

%

We need some preparations for the proofs. First we recall that
for any feasible metric $g$ and any $X\in W$, there are symplectic
coordinates, and numbers
$$
0<\lambda _d(X)\le \dots \le \lambda _1(X)\le 1
$$
such that
$$
g_X(Y) = \sum _{k=1}^d \lambda _k(X)(y_k^2+\eta _k^2),
\quad
g_X^\sigma (Y) = \sum _{k=1}^d \lambda _k(X)^{-1}(y_k^2+\eta _k^2),
$$
where $Y=(y,\eta )\in W$ in these coordinates. The intermediate metric
$$
g^{{}_0}_X(Y) = \sum _{k=1}^d (y_k^2+\eta _k^2)
$$
is symplectically invariant defined and is \emph{symplectic}, i.{\,}e.
$(g^{{}_0})^\sigma =g^{{}_0}$.

\par

We have the following lemma.

\par

\begin{lemma}\label{prop:sumOpsSchatten}
Let $p\in (0,1]$, $\mascH _1$ and $\mascH _2$ be Hilbert spaces,
and let $T_j\in \maclB (\mascH _1,\mascH _2)$,
$j\ge 1$. Then
\begin{equation*}
\nm T{\mascI _p(\mascH _1,\mascH _2)}
\le
\left (\sum _{k=1}^\infty \nm {T_k}{\mascI _p(\mascH _1,\mascH _2)}^p \right )^{1/p},
\qquad
T=\sum _{k=1}^\infty T_k ,
\end{equation*}
provided the right-hand side makes sense as an element in
$\maclB (\mascH _1,\mascH _2)$.
\end{lemma}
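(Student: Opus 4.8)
The plan is to reduce the statement to the two-operator estimate and then pass to the infinite sum by a completeness argument. The key ingredient is the \emph{$p$-triangle inequality} for the Schatten quasi-norm: for $0<p\le 1$ and $A,B\in \maclB (\mascH _1,\mascH _2)$,
\[
\nm {A+B}{\mascI _p(\mascH _1,\mascH _2)}^p
\le
\nm A{\mascI _p(\mascH _1,\mascH _2)}^p
+\nm B{\mascI _p(\mascH _1,\mascH _2)}^p .
\]
Written out, this is the subadditivity of $X\mapsto \sum _j\sigma _j(X)^p$, and it is exactly the assertion that $\mascI _p(\mascH _1,\mascH _2)$ is a complete $p$-normed (hence quasi-Banach) space. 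I would take this as known and cite \cite{BS,Si}; its ultimate source is Rotfel'd's inequality $\sum _jf(\sigma _j(A+B))\le \sum _jf(\sigma _j(A))+\sum _jf(\sigma _j(B))$, valid for every non-negative concave $f$ on $[0,\infty )$ with $f(0)=0$, applied to $f(t)=t^p$. Note that here one must use the $p$-th power of the quasi-norm rather than the quasi-norm itself, precisely because of the absence of local convexity for $p<1$.

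Granting the $p$-triangle inequality, an immediate induction gives $\nm {\sum _{k=1}^N T_k}{\mascI _p}^p\le \sum _{k=1}^N\nm {T_k}{\mascI _p}^p$ for every finite $N$. Put $S_N=\sum _{k=1}^N T_k$. The same inequality applied to the tail yields $\nm {S_M-S_N}{\mascI _p}^p\le \sum _{k=N+1}^M\nm {T_k}{\mascI _p}^p$ for $M>N$, so whenever the right-hand side $\sum _k\nm {T_k}{\mascI _p}^p$ of the assertion is finite the sequence $(S_N)$ is Cauchy in $\mascI _p(\mascH _1,\mascH _2)$. By completeness it converges there to some $S\in \mascI _p(\mascH _1,\mascH _2)$, and since $\nm \cdo {\maclB (\mascH _1,\mascH _2)}\le \nm \cdo {\mascI _p(\mascH _1,\mascH _2)}$ the convergence also holds in operator norm. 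As $S_N\to T$ in $\maclB (\mascH _1,\mascH _2)$ by hypothesis, we conclude $S=T$, so $T\in \mascI _p(\mascH _1,\mascH _2)$. Finally, continuity of $\nm \cdo {\mascI _p}^p$ gives $\nm T{\mascI _p}^p=\lim _{N\to \infty}\nm {S_N}{\mascI _p}^p\le \sum _{k=1}^\infty \nm {T_k}{\mascI _p}^p$, which is the desired estimate after taking $p$-th roots.

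The only genuinely non-trivial input is the sharp $p$-triangle inequality (equivalently the subadditivity of $\trr {|\cdot |^p}$ for $0<p\le 1$); once this and the attendant completeness of $\mascI _p$ are available, the passage to infinite sums is routine. Thus I expect the main obstacle to be purely the invocation of this classical subadditivity, which I would quote from \cite{BS,Si} rather than reprove.
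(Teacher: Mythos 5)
Your argument is correct. The paper gives no proof of this lemma at all, referring instead to \cite[Appendix 1.1]{Pe}; your route --- the sharp two-term inequality $\nm {A+B}{\mascI _p}^p\le \nm A{\mascI _p}^p+\nm B{\mascI _p}^p$ obtained from Rotfel'd's subadditivity of $\sum _j f(\sigma _j(\cdot ))$ for concave $f$ with $f(0)=0$, followed by finite induction, a Cauchy/completeness argument in $\mascI _p$, and identification of the limit via $\nm \cdo {\maclB (\mascH _1,\mascH _2)}\le \nm \cdo {\mascI _p}$ --- is exactly the standard proof found there, and you correctly isolate the one genuinely non-trivial input, namely that the triangle constant for $\nm \cdo {\mascI _p}^p$ is $1$ rather than some $C>1$, without which the passage to infinitely many summands would fail.
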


\par

We refer to \cite[Appendix 1.1]{Pe} for the proof of Lemma
\ref{prop:sumOpsSchatten}.

\par

\begin{cor}\label{prop:sumOpsSchatten2}
Let $p\in (0,1]$, $t\in \mathbf R$ and $a\in \mascS '(\rr {2d})$.
Also let $\{ÊÊ\fy _j Ê\} _{j\in I}$ be a sequence in $\mascS  (\rr {2d})$
such that $0\le \fy _j\le 1$ for every $j$ and
$$
\sum _{j\in I}\fy _j =1.
$$
Then
\begin{equation}\label{stpSumEst}
\nm a{s_{t,p}(\rr {2d})}
\le
\left (  \sum _{j\in I}  \nm {\fy _ja}{s_{t,p}(\rr {2d})}^p \right )^{1/p}.
\end{equation}
\end{cor}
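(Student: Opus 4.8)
The plan is to read \eqref{stpSumEst} as an immediate application of Lemma \ref{prop:sumOpsSchatten}, taking $\mascH _1=\mascH _2=L^2(\rr d)=M^2(\rr d)$ (the Hilbert space underlying the trivially weighted class) and taking the summands to be the operators $\op _t(\fy _j a)$. First I would recall that, by the very definition of the symbol class, $\nm b{s_{t,p}(\rr {2d})}=\nm {\op _t(b)}{\mascI _p(L^2,L^2)}$ for every $b\in \mascS '(\rr {2d})$, so that the asserted inequality is nothing but
\[
\nm {\op _t(a)}{\mascI _p}
\le
\left (\sum _{j\in I}\nm {\op _t(\fy _j a)}{\mascI _p}^p\right )^{1/p}.
\]

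Next I would establish the operator identity $\op _t(a)=\sum _{j\in I}\op _t(\fy _j a)$. Since each $\fy _j\in \mascS (\rr {2d})$ and $a\in \mascS '(\rr {2d})$, the products $\fy _j a$ are well-defined tempered distributions, and the hypotheses $0\le \fy _j\le 1$ and $\sum _{j\in I}\fy _j=1$ give $\sum _{j\in I}\fy _j a=a$ with convergence in $\mascS '(\rr {2d})$. As the map $a\mapsto \op _t(a)$ is linear and continuous from $\mascS '(\rr {2d})$ into the operators from $\mascS (\rr d)$ to $\mascS '(\rr d)$ (cf. Remark \ref{BijKernelsOps}), applying it termwise yields $\op _t(a)=\sum _{j\in I}\op _t(\fy _j a)$ as operators from $\mascS (\rr d)$ to $\mascS '(\rr d)$.

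To invoke Lemma \ref{prop:sumOpsSchatten} I must verify its proviso, that the right-hand side of \eqref{stpSumEst} represents an element of $\maclB (L^2,L^2)$. If that right-hand side is infinite there is nothing to prove, so I assume it finite; then only countably many terms are non-zero (matching the countable indexing in the lemma), each $\op _t(\fy _j a)$ lies in $\mascI _p\subseteq \mascI _1$, and since $p\le 1$ the elementary bound $\sum _jc_j\le (\sum _jc_j^p)^{1/p}$ for $c_j\ge 0$, together with $\nm \cdo{\mascI _1}\le \nm \cdo{\mascI _p}$, gives
\[
\sum _{j\in I}\nm {\op _t(\fy _j a)}{\mascI _1}
\le
\left (\sum _{j\in I}\nm {\op _t(\fy _j a)}{\mascI _p}^p\right )^{1/p}<\infty .
\]
Hence $\sum _{j\in I}\op _t(\fy _j a)$ converges absolutely in the Banach space $\mascI _1\subseteq \maclB (L^2,L^2)$; its sum agrees with the limit found above and therefore equals $\op _t(a)$. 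Lemma \ref{prop:sumOpsSchatten} then applies and delivers \eqref{stpSumEst} at once.

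The only genuinely non-formal point, which I expect to be the main (though minor) obstacle, is reconciling the two modes of convergence of $\sum _{j\in I}\op _t(\fy _j a)$: the weak convergence as operators $\mascS (\rr d)\to \mascS '(\rr d)$ coming from $\sum _{j\in I}\fy _j a=a$, and the norm convergence in $\mascI _1$ coming from the estimate above. I would settle this by observing that $\mascI _1$-convergence forces convergence of the associated distribution kernels in $\mascS '(\rr {2d})$, which through the bijection of Remark \ref{BijKernelsOps} pins the common limit down to $\op _t(a)$, so that the hypothesis of the lemma is met with $T=\op _t(a)$.
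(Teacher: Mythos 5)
Your overall strategy coincides with the paper's: reduce to the case where the right-hand side of \eqref{stpSumEst} is finite, observe that the partial sums of $\sum _{j}\op _t(\fy _ja)$ then converge (you use absolute convergence in $\mascI _1$; the paper uses the Cauchy estimate from Lemma \ref{prop:sumOpsSchatten} together with completeness of the quasi-Banach space $s_{t,p}$ --- the same mechanism), and finally apply Lemma \ref{prop:sumOpsSchatten}. The one step where your argument has a genuine gap is the identification of the limit of the series with $\op _t(a)$. You assert that $\sum _{j\in I}\fy _ja=a$ with convergence in $\mascS '(\rr {2d})$ ``because $0\le \fy _j\le 1$ and $\sum \fy _j=1$''. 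This does not follow: writing $\psi _k=\sum _{j\in I_k}\fy _j$ for the partial sums of the partition, one has $0\le \psi _k\le 1$ and $\psi _k\to 1$ pointwise, but $\psi _ka\to a$ in $\mascS '$ requires $\psi _k\phi \to \phi$ in $\mascS$ for every test function $\phi$, hence uniform control of all derivatives of $\psi _k$; the hypotheses provide no such control (the $\fy _j$ may oscillate with unbounded derivatives, so that e.{\,}g. $\partial ^\gamma \psi _k$ need not stay bounded). Your proposed repair --- that $\mascI _1$-convergence forces kernel convergence in $\mascS '$ and Remark \ref{BijKernelsOps} then pins the limit down --- is circular: it identifies the $\mascI _1$-limit with the $\mascS '$-limit of the kernels of the partial sums, but to conclude that this common limit is $K_{a,t}$ you again need $\psi _ka\to a$ in $\mascS '$, i.{\,}e. exactly the unproved claim.

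The paper closes this step differently, and the difference is the whole point. Under the finiteness assumption each $b_k=\psi _ka$ lies in $s_{t,p}\subseteq s_{t,2}=L^2(\rr {2d})$, and the Cauchy estimate produces a limit $b$ of $(b_k)$ in $s_{t,p}$, hence also in $L^2$. The limit is then identified at the level of symbols rather than operators: since $0\le \psi _k\le 1$ and $\psi _k\to 1$ pointwise, one has $|b_k|\le |a|$ and $b_k\to a$ pointwise, so Beppo Levi's theorem gives $a\in L^2$ and $a=b$. This uses only the pointwise convergence of the partition of unity, which is all the hypotheses supply (and it delivers $a\in L^2$ as a by-product rather than assuming any regularity of $a$). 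If you replace your $\mascS '$-convergence claim by this $L^2$/monotone-convergence identification, the rest of your proof goes through essentially as written.
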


\par

\begin{proof}
Let $a_j=\fy _ja$, $I_k\subseteq I$, $k\ge 1$ be a sequence of increasing
and finite sets such that $\bigcup _{k=1}^\infty I_k=I$, and set
$b_k=\sum _{j\in I_k}a_j$.
We may assume that
\begin{equation}\label{Eq:SumajEst}
\sum _{j\in I}  \nm {a_j}{s_{t,p}}^p<\infty ,
\end{equation}
since otherwise there is nothing to prove.

\par

If $k_1\le k_2$, then Lemma \ref{prop:sumOpsSchatten} gives
\begin{equation}\label{Eq:bkEsts}
\nm {b_{k_2}-b_{k_1}}{s_{t,p}}^p \le \sum _{j\in I_{k_2}\setminus I_{k_1}}
\nm {a_j}{s_{t,p}}^p
\quad\text{and}\quad
\nm {b_k}{s_{t,p}}^p \le \sum _{j\in I_k} \nm {a_j}{s_{t,p}}^p.
\end{equation}
By \eqref{Eq:SumajEst} we get $\nm {b_{k_2}-b_{k_1}}{s_{t,p}}\to 0$
as $k_1,k_2\to \infty$, and by completeness, there is a unique element
$b\in {s_{t,p}}$ such that $\nm {b_k-b}{s_{t,p}}\to 0$ as $k\to \infty$.

\par

Since $s_{t,p}\subseteq s_{t,2}=L^2$, we get $b\in L^2$ and $b_k\to b$
in $L^2$ as well when $k\to \infty$. Furthermore, since $|b_k|\le |a|$
and $b_k\to a$ pointwise, Beppo Levi's theorem gives that
$a\in L^2$ and $a=b$. The result now follows by letting $k$ tends to
$\infty$ in \eqref{Eq:bkEsts}.
\end{proof}

\par

\begin{proof}[Proof of Theorem \ref{thm:WeylHorm1}]
Since $S(m,g)\subseteq
S(m,g^{{}_0})$ when $g^{{}_0}$ is the symplectic metric of
$g$, and that $m$ is $g^{{}_0}$-continuous when $m$
is $g$-continuous, we may assume that $g$ is symplectic.
(Cf. \cite{Toft4}.)

\par

Let $c>0$, $C>0$, $I$, $X_j$, $U_j$ and $\fy _j$ be chosen as in
Remarks 2.3 and 2.4 in \cite{Toft4}, except that we may assume
that $c>0$ and $C>0$ was chosen such that
\begin{gather*}
C^{-1}m(X)\le m(Y)\le Cm(X)
\quad \text{and}\quad
C^{-1}g_X\le g_Y\le Cg_X
\\[1ex]
\text{when}\quad g_Y(X-Y)<2c
\end{gather*}
Also set $a_j=\fy _ja$. Then $\supp a_j\subseteq U_j$,
and the set of $a_j$ is bounded in $S(m,g)$.
We first estimate $\nm {a_j}{s_p^w}$ for a fixed $j\in I$.

\par

Let $B_r(X_0)$ denotes the open ball with center at $X_0$ and radius $r$,
$\Phi \in C_0^\infty (B_1(0))\setminus 0$ be such that $0\le \Phi \le 1$,
and choose symplectic coordinates $X=(x,\xi )\in \rr d\times \rr d\simeq
\rr {2d}$ such that $g_{X_j}$ takes the form
$$
g_{X_j}(X) = g_{X_j}(x,\xi ) = |X|^2=|x|^2 + |\xi |^2,
$$
in these coordinates. Then for every multi-index $\alpha$, we have
$\partial _g^\alpha = \partial _x^{\alpha _1}\partial _\xi ^{\alpha _2}$ for some
$\alpha _1$ and $\alpha _2$ such that $\alpha _1+\alpha _2=\alpha$, and the
support of $a_j$ is contained in $B_c(X_j)$. Let $N\ge 0$ be an integer such that
$Np>d$. By Theorem 3.1 in \cite{GaSa} and
Theorem \ref{thmOpSchatten} we get
\begin{multline}\label{ajSchattenEst1}
\nm {a_j}{s_p^w}^p \le C_1\iiiint | V_\Phi a_j(x,\xi ,\eta ,y)|^p\, dxd\xi d\eta dy
\\[1ex]
\le C_2\sum _{|\alpha +\beta|=2N} \iiiint | V_{\partial ^\beta \Phi} (\partial
^\alpha a_j)(x,\xi ,\eta ,y)\eabs {(y,\eta )}^{-2N}|^p\, dxd\xi d\eta dy
\\[1ex]
\le C_3\sum _{|\alpha \le 2N} \iiiint | V_\Phi (\partial
^\alpha a_j)(x,\xi ,\eta ,y)|^p\eabs {(y,\eta )}^{-2Np}\, dxd\xi d\eta dy,
\end{multline}
for some constants $C_1,\dots ,C_3$ which are independent of $j\in I$.

\par

We shall estimate the last integrand in \eqref{ajSchattenEst1}. Let $\chi _{0,j}$
be the characteristic function of $U_j=B_c(X_j)$, and let $\chi _j$
be the characteristic function of $U_j=B_{1+c}(X_j)$. Then
$$
|(\partial ^\alpha a_j)(X)|\le C_{1,\alpha}m(X)\chi _{0,j}(X)
\le C_{2,\alpha}m(X_j)\chi _{0,j}(X),
$$
for some constants $C_{1,\alpha}$ and $C_{2,\alpha}$, which only depend
on $\alpha$. Here the last step follows from the fact that $m$ is
$g$-continuous. This gives
\begin{multline*}
| V_\Phi (\partial ^\alpha a_j)(x,\xi ,\eta ,y)|
\le
(2\pi )^{-d/2}\int _{\rr {2d}} |(\partial ^\alpha a_j)(Z-X)|\Phi (Z)\, dZ
\\[1ex]
\le
C_{3,\alpha}m(X_j)\int _{\rr {2d}} |\chi _{0,j}(Z-X)|\Phi (Z)\, dZ
\le
C_{4,\alpha}m(X_j)\chi _j(X)
\\[1ex]
=
C_{5,\alpha}\left (m(X_j)^p\int \fy _j(Z)\, dZ \right )^{1/p} \chi _j(X)
\\[1ex]
\le
C_{6,\alpha}\left ( \int m(Z)^p\fy _j(Z)\, dZ \right )^{1/p} \chi _j(X),
\end{multline*}
for some constants $C_{3,\alpha},\dots ,C_{6,\alpha}$, which only depend
on $\alpha$.

\par

By combining the last estimate with \eqref{ajSchattenEst1}, we get
\begin{multline*}
\nm {a_j}{s_p^w}^p\le C_1 \left (  \iint \chi _j(X)^p\eabs Y^{-2Np}\, dXdY  \right )
\cdot  \left ( \int m(Z)^p\fy _j(Z)\, dZ  \right )
\\[1ex]
\le
C_2\int m(Z)^p\fy _j(Z)\, dZ,
\end{multline*}
for some constants $C_1$ and $C_2$, which are independent of $j$.

\par

A combination of the last estimate and Corollary
\ref{prop:sumOpsSchatten2} now gives
$$
\nm a{s_p^w}^p \le \sum _j \nm {a_j}{s_p^w}^p
\lesssim \sum _j \int m(Z)^p\fy _j(Z)\, dZ = \nm m{L^p}^p,
$$
and the result follows.
\end{proof}

\par

By similar arguments as in the proof of \cite[Theorem 2.11]{Toft4},
Theorem \ref{thm:WeylHorm1} gives the following results involving
suitable Sobolev type spaces, introduced in by Bony and Chemin
in \cite{BoCh} (see also \cite{Le}). The details are left for the reader. Here recall that
the Riemannian metric $g$ on $W$ is called \emph{split} if there are symplectic
coordinates such that
$$
g_X(y,\eta ) = g_X(y,-\eta ),\qquad X\in W,\ Y=(y,\eta )\in W,
$$
in these coordinates.

\par

\begin{thm}\label{thm:WeylHorm3}
Let $t=1/2$, $a\in \mascS '(W)$, $p\in (0,\infty ]$, $g$ be strongly
feasible on $W$, and let $m,m_1,m_2$ be $g$-continuous and
$(\sigma ,g)$-temperate weights on $W$ such that $m_2m/m_1\in L^p(W)$.
Then $S(m,g)\subseteq s_{t,p}(m_1,m_2,g)$.

\par

Furthermore, if in addition $g$ is split, then $S(m,g)\subseteq s_{t,p}(m_1,m_2,g)$
holds for general $t\in \mathbf R$.
\end{thm}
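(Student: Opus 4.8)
The plan is to reduce the weighted statement to the unweighted Theorem~\ref{thm:WeylHorm1} by conjugating $\op^w(a)$ with elliptic operators that realize the Bony--Chemin Sobolev spaces $H(m_1,g)$ and $H(m_2,g)$ as isometric copies of $L^2$, exactly as in the proof of \cite[Theorem 2.11]{Toft4}; the only genuinely new input is that Theorem~\ref{thm:WeylHorm1} now supplies the Schatten conclusion also for $p\in (0,1)$. First I would treat the Weyl case $t=1/2$. Since $g$ is strongly feasible and $m_1,m_2$ are $(\sigma ,g)$-temperate, the H{\"o}rmander--Weyl calculus provides elliptic symbols $\lambda _j\in S(m_j,g)$ whose Weyl quantizations $\Lambda _j=\op^w(\lambda _j)$ are isomorphisms $H(m_j,g)\to L^2$, with parametrices $\Lambda _j^{-1}=\op^w(\mu _j)+R_j$, where $\mu _j\in S(1/m_j,g)$ and $R_j$ is regularizing. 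Each $H(m_j,g)$ is a Hilbert space, so that $s_{t,p}(m_1,m_2,g)$, in parallel with $s_{t,p}(\omega _1,\omega _2)$, consists of those $a$ with $\op_t(a)\in \mascI _p(H(m_1,g),H(m_2,g))$. Writing $\op^w(a)=\Lambda _2^{-1}T\Lambda _1$ with $T=\Lambda _2\op^w(a)\Lambda _1^{-1}\colon L^2\to L^2$, the ideal property \eqref{YoungSchatten} shows that $\op^w(a)\in \mascI _p(H(m_1,g),H(m_2,g))$ if and only if $T\in \mascI _p(L^2,L^2)$, i.e. if and only if the Weyl symbol of $T$ lies in $s_p^w(W)$.

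The key step is to identify that symbol. By the composition rule of the H{\"o}rmander--Weyl calculus, $T=\op^w(c)+R$, where $c=\lambda _2\#a\#\mu _1\in S(m_2\cdot m\cdot m_1^{-1},g)=S(m_2m/m_1,g)$ and $R$ is regularizing, hence in $\mascI _p$ for every $p$. The weight $m_2m/m_1$ is $g$-continuous, being a product and quotient of $g$-continuous weights, and lies in $L^p(W)$ by hypothesis. For $p\in (0,2]$, Theorem~\ref{thm:WeylHorm1} gives $c\in s_p^w(W)$, that is $\op^w(c)\in \mascI _p(L^2,L^2)$; for $p\in [1,\infty ]$ the same unweighted inclusion $S(m_2m/m_1,g)\subseteq s_p^w(W)$ is contained in \cite[Theorem 4.4(1)]{Toft4}. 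Since $(0,2]\cup [1,\infty ]=(0,\infty ]$, we obtain $\op^w(c)\in \mascI _p$ for all $p\in (0,\infty ]$, and combining $\op^w(c)$ with the regularizing term $R$ through Lemma~\ref{prop:sumOpsSchatten} (to absorb the quasi-triangle inequality when $p<1$) yields $T\in \mascI _p(L^2,L^2)$. Tracing the equivalences backwards gives $\op^w(a)\in s_{1/2,p}(m_1,m_2,g)$, which establishes $S(m,g)\subseteq s_{1/2,p}(m_1,m_2,g)$.

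For the passage to general $t$ under the extra hypothesis that $g$ is split, I would invoke the change-of-quantization formula \eqref{calculitransform}: $\op_t(a)=\op^w(a')$ with $a'=e^{i(t-1/2)\scal {D_x}{D_\xi }}a$. The role of the split condition is precisely that the operator $e^{is\scal {D_x}{D_\xi }}$ then maps $S(m,g)$ continuously into itself, so that $a'\in S(m,g)$ whenever $a\in S(m,g)$. Applying the Weyl case already established to $a'$ gives $\op_t(a)=\op^w(a')\in \mascI _p(H(m_1,g),H(m_2,g))$, i.e. $a\in s_{t,p}(m_1,m_2,g)$, which is the desired inclusion for general $t$.

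The hard part will be the two calculus facts that the strongly-feasible and split hypotheses are tailored to supply: the invertibility up to regularizing remainders of the elliptic weight operators $\Lambda _j$ together with the membership $\mu _j\in S(1/m_j,g)$ of their parametrix symbols, which relies on $g$ being $\sigma$-temperate, and the invariance $e^{is\scal {D_x}{D_\xi }}\colon S(m,g)\to S(m,g)$, which is exactly where the split assumption enters. Once these standard calculus statements are granted, the remaining work is bookkeeping with the quasi-Banach ideal property \eqref{YoungSchatten} and Lemma~\ref{prop:sumOpsSchatten}, so that no difficulty beyond what is already resolved in Theorem~\ref{thm:WeylHorm1} appears in the regime $p<1$.
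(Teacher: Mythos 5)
Your proposal follows exactly the route the paper intends: the paper's ``proof'' is the single remark that the theorem follows ``by similar arguments as in the proof of [Toft4, Theorem 2.11]'' combined with Theorem~\ref{thm:WeylHorm1}, and your reduction --- conjugating $\op ^w(a)$ by elliptic Bony--Chemin operators $\Lambda _j$ realizing $H(m_j,g)\simeq L^2$, identifying the conjugated symbol $\lambda _2\#a\#\mu _1\in S(m_2m/m_1,g)$, applying Theorem~\ref{thm:WeylHorm1} for $p\in (0,2]$ and the known case $p\ge 1$ from \cite{Toft4}, and using the split hypothesis to make $e^{i(t-1/2)\scal {D_x}{D_\xi}}$ preserve $S(m,g)$ for general $t$ --- is a faithful expansion of precisely those arguments. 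The proposal is correct and in line with what the paper leaves to the reader.
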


\par

\begin{rem}
We note that \cite[Theorem 2.11]{Toft4} covers Theorem \ref{thm:WeylHorm3}
when $p\ge 1$. We also note that the assumption $a\in S(m,g)$ is missing, and
that $s_{t,p}^w$ and $s_{t,\sharp}^w$ should be $s_{t,p}$ and $s_{t,\sharp}$,
respectively, in (3) and (4) in \cite[Theorem 2.11]{Toft4}.
\end{rem}

\par

\section{Applications to compactly supported Schatten-von Neumann
symbols}\label{sec5}

\par

In this section we introduce a subset $s_{t,p}^q(\rr {2d})$ of $s_{t,p}(\rr {2d})$
when $p,q\in (0,\infty ]$. We show that
the set of compactly supported
elements in $s_{t,p}^q(\rr {2d})$ with $q=\min (p,1)$ and $q=1$
is a subspace and superspace, respectively, of compactly
supported elements in $\mathscr F L^p$. This
result goes back to \cite{Toft0,Toft1} in the case $p\ge 1$ and $t=1/2$.
The proof is based on Theorem \ref{thmOpSchatten} and certain
characterizations given here, which might be of independent interests.

\par

First we make the following definition. Here $\ON _d$ is the set of
all orthonormal sequences in $L^2(\rr d)$.

\par

\begin{defn}
Let $t\in \mathbf R$ and $p,q\in (0,\infty ]$. Then $s_{t,p}^q(\rr {2d})$
consists of all $a\in \mascS '(\rr {2d})$ of the form
$$
a=\sum _{j=0}^\infty \lambda _j W^t_{f_j,g_j},
$$
for some non-negative and non-increasing sequence
$\{\lambda _j \} _{j=0}^\infty \in \ell ^p(\mathbf N)$, and
some $\{ f_j \} _{j=0}^\infty \in \ON _d$ and $\{ g_j \}
_{j=0}^\infty \in \ON _d$ which at the same time are bounded
in $M^{2q}(\rr d)$. For any $a\in s_{t,p}^q(\rr {2d})$ we set
$$
\nm a{s_{t,p}^q} \equiv \nm {\{ \lambda _j \nm {f_j}{M^{2q_0}}
\nm {g_j}{M^{2q_0}} \} _{j=0}^\infty }{\ell ^p},\qquad q_0=\min (1,q).
$$
\end{defn}

\par

In the following lemma we present some basic facts for $s_{t,p}^q(\rr {2d})$.

\par

\begin{lemma}\label{basicLemmaspp}
Let $p,q,p_j,q_j\in (0,\infty ]$, $j=0,1,2$ be such that $p_1\le p_2$,
$q_1\le q_2$ and $q_0\ge 1$. Then the following is true:
\begin{enumerate}
\item $s_{t,p}^{q_0}(\rr {2d})=s_{t,p}(\rr {2d})$ and $\nm a{s_{t,p}^{q_0}}
=\nm a{s_{t,p}}$ when $a \in s_{t,p}(\rr {2d})$;

\vrum

\item $\mascS (\rr {2d})\subseteq s_{t,p}^q(\rr {2d})$;

\vrum

\item $s_{t,p_1}^{q_1}(\rr {2d})\subseteq s_{t,p_2}^{q_2}(\rr {2d})$;

\vrum

\item if in addition $t=1/2$, then $a\in s_{t,p}^{q}(\rr {2d})$, if and only if
$\mascF _\sigma a\in s_{t,p}^{q}(\rr {2d})$, and
$$
\nm {\mascF _\sigma a}{s_{t,p}^q} = \nm a{s_{t,p}^q} .
$$
\end{enumerate}
\end{lemma}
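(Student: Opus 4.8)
The guiding observation is that a representation $a=\sum_{j}\lambda _j W^t_{f_j,g_j}$ with $\{f_j\},\{g_j\}\in\ON _d$ and $\lambda _j\ge 0$ non-increasing is, by \eqref{trankone}, exactly a singular value decomposition of the compact operator $\op _t(a)=(2\pi)^{-d/2}\sum_j\lambda _j(\,\cdot\,,g_j)f_j$; hence $\sigma _j(\op _t(a))$ is proportional to $\lambda _j$, and the $f_j,g_j$ are (up to the usual ambiguities) the eigenfunctions of $|\op _t(a)^*|$ and $|\op _t(a)|$. In particular $s_{t,p}^q\subseteq s_{t,p}$ for every $q$, since the decomposition already exhibits $\{\sigma _j(\op _t(a))\}\in\ell ^p$. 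For part (1) I would feed the singular value decomposition of an arbitrary $a\in s_{t,p}$ (compact when $p<\infty$) into this formula. Because $q_0\ge 1$ forces $2q_0\ge 2$, Proposition \ref{p1.4B}(2) gives the continuous inclusion $L^2=M^2(\rr d)\hookrightarrow M^{2q_0}(\rr d)$, so the $L^2$-orthonormal sequences $\{f_j\},\{g_j\}$ are automatically bounded sets in $M^{2q_0}$ and $a\in s_{t,p}^{q_0}$. Since $\min(1,q_0)=1$, the defining quasi-norm only sees $M^2$, on which the unit vectors $f_j,g_j$ have unit norm; thus $\nm a{s_{t,p}^{q_0}}=\nm {\{\lambda _j\}}{\ell ^p}=\nm a{s_{t,p}}$.

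Part (3) is then pure monotonicity carried on the same decomposition: if $a\in s_{t,p_1}^{q_1}$ with data $\lambda _j,f_j,g_j$, then $p_1\le p_2$ gives $\{\lambda _j\}\in\ell ^{p_1}\subseteq\ell ^{p_2}$, while $q_1\le q_2$ gives $M^{2q_1}\hookrightarrow M^{2q_2}$ (again Proposition \ref{p1.4B}(2)), so $\{f_j\},\{g_j\}$ stay bounded in $M^{2q_2}$. The identical representation $a=\sum_j\lambda _j W^t_{f_j,g_j}$ therefore places $a$ in $s_{t,p_2}^{q_2}$, with the attendant norm estimate.

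For part (4) I would push the decomposition through $\mascF _\sigma$. As in the proof of Lemma \ref{identification1} (Proposition 4.3 in \cite{Toft12}), I would first establish, in the Weyl case, the operator identity $\op ^w(\mascF _\sigma a)=P\circ\op ^w(a)$, where $P$ is the parity operator $Pu=\check u$; this is exactly what produces the reflection $\omega _2\mapsto\check\omega _2$ in Lemma \ref{identification1}. Applying $P$ to $\op ^w(a)=(2\pi)^{-d/2}\sum_j\lambda _j(\,\cdot\,,g_j)f_j$ yields $(2\pi)^{-d/2}\sum_j\lambda _j(\,\cdot\,,g_j)\check f_j$, that is $\mascF _\sigma a=\sum_j\lambda _j W_{\check f_j,g_j}$. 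Parity is an $L^2$-isometry preserving orthonormality, and with an even window it preserves every modulation quasi-norm, $\|\check f_j\|_{M^{2q}}=\|f_j\|_{M^{2q}}$ and likewise in $M^{2q_0}$; hence the transformed data obey the same $\ell ^p$- and $M^{2q}$-bounds and give the same value of the $s_{t,p}^q$-norm. This proves $\mascF _\sigma a\in s_{1/2,p}^q$ with $\nm {\mascF _\sigma a}{s_{1/2,p}^q}=\nm a{s_{1/2,p}^q}$, and since $\mascF _\sigma$ is an involution the equivalence is symmetric.

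For part (2) and a Schwartz symbol $a$, the kernel $K_{a,t}$ in \eqref{atkernel} again lies in $\mascS(\rr {2d})$, so $\op _t(a)$ is smoothing of infinite order; its singular values decay faster than any power of $j$, whence $\{\lambda _j\}\in\ell ^p$ for every $p>0$, and the singular vectors, lying in the ranges of the smoothing operators $\op _t(a)^*\op _t(a)$ and $\op _t(a)\op _t(a)^*$, belong to $\mascS(\rr d)$. When $q\ge 1$ the membership $a\in s_{t,p}^q$ is then immediate from the $L^2\hookrightarrow M^{2q}$ argument of part (1). The main obstacle is the case $q<1$: there $M^{2q}\subsetneq L^2$, orthonormality no longer forces $\sup_j\|f_j\|_{M^{2q}}<\infty$, and one must genuinely prove that the singular vectors of a Schwartz-kernel operator form a bounded set in $M^{2q}$. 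This is the delicate point of the whole lemma, and I would attack it by exploiting the infinite-order smoothing to obtain uniform decay of $V_\phi f_j$; the instructive test case to keep in mind is the Gaussian Weyl symbol, whose eigenfunctions are the Hermite functions, so that the behaviour of their $M^{2q}$-norms is precisely what controls whether the uniform bound, and hence this step, goes through.
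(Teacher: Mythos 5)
Your treatment of (1), (3) and (4) is correct and coincides with the paper's: (1) rests on $2q_0\ge 2$ giving $L^2=M^2\hookrightarrow M^{2q_0}$ so that orthonormality already supplies the $M^{2q_0}$-bound, and on $\min(1,q_0)=1$ so that the quasi-norm only involves $M^2$-norms of unit vectors; (3) is the monotonicity $\ell^{p_1}\subseteq\ell^{p_2}$ and $M^{2q_1}\hookrightarrow M^{2q_2}$ applied to one and the same expansion; and (4) amounts to the identity $\mascF_\sigma W_{f,g}=W_{\check f,g}$ (which you derive via the parity operator rather than quoting it directly) together with the invariance of the $M^{2q}$-quasi-norm under $f\mapsto\check f$. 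Your remark that an even window gives exact equality rather than mere equivalence of the quasi-norms is a fair point of care.

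The genuine gap is in (2), and you have located it yourself without closing it. For $q<1$ you must produce, for a Schwartz symbol $a$, an expansion $a=\sum_j\lambda_jW^t_{f_j,g_j}$ with $\{\lambda_j\}\in\ell^p$ for \emph{every} $p>0$ \emph{and} with $\{f_j\},\{g_j\}$ bounded in $M^{2q}(\rr d)$; rapid decay of the singular values and $f_j,g_j\in\mascS(\rr d)$ individually do not yield a uniform $M^{2q}$-bound, and "uniform decay of $V_\phi f_j$" is exactly the assertion to be proved, not a tool. The paper settles this by Proposition \ref{SchattenExp}, whose proof is not a soft smoothing argument: one first reduces to $\op^w(a)\ge 0$, shows via Proposition \ref{SchwartzKernels} that the fractional powers $T_N=T^{1/2N}$ again have Schwartz kernels, applies Lemma 4.1.2 of \cite{Toft0} to the Weyl symbol $a_N=\sum_j\lambda_j^{1/2N}W_{f_j,f_j}$ to get $\sum_j\lambda_j^{1/2N}\nm{H^Nf_j}{L^2}^2<\infty$ for every $N$ (with $H$ the harmonic oscillator), and then converts $L^2$-control of $H^{N+N_0}f_j$ into $M^{2q}$-control of $f_j$ by H{\"o}lder's inequality against the weight $\eabs{\cdo}^{-2N_0}\in L^{q_0}$. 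The weighted sums $\sum_j\lambda_j^{1/2N}\nm{H^Nf_j}{L^2}^2$ (not a bound on each $\nm{f_j}{M^{2q}}$ separately, but enough to dominate the $s_{t,p}^q$ data after the H{\"o}lder step) are what make the $q<1$ case work, and your Hermite-function heuristic would have to be replaced by this, or an equivalent, quantitative argument. As written, part (2) of your proposal is a correct reduction plus an unproven claim.
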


\par

\begin{proof}
The assertion (1) follows from the fact that $L^2(\rr d)$ is continuously
embedded in $M^{2q_0}(\rr d)$, since $2q_0\ge 2$, (2) follows from
Proposition \ref{SchattenExp} below, and (3) is a straight-forward
consequence of the definitions.

\par

Finally, (4) follows from the facts that $\mascF _\sigma W_{f,g}=W_{\check f,g}$
and that $\nm {\check f}{M^{2q}}\asymp \nm f{M^{2q}}$. This gives the result.
\end{proof}

\par

The following result characterizes the set of compactly supported
elements in $s_{t,p}(\rr {2d})$.

\par

\begin{thm}\label{CompSuppSchatten}
Let $t\in \mathbf R$ and $p,q\in (0,\infty ]$ be such that
$r\le q$. Then
\begin{multline}\label{CompSuppSchattenEq}
s^q_{t,q}(\rr {2d})\bigcap \mathscr E'(\rr {2d})
\subseteq \mathscr FL^q(\rr {2d}) \bigcap \mathscr E'(\rr {2d})
\\[1ex]
= M^{p,q}(\rr {2d})\bigcap \mathscr E'(\rr {2d})
\subseteq s_{t,q}(\rr {2d})\bigcap \mathscr E'(\rr {2d}).
\end{multline}
\end{thm}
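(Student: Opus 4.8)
The chain has three links, which I would establish in the order middle, right, left. The central equality $\mathscr FL^q\cap\mascE'=M^{p,q}\cap\mascE'$ is the local coincidence of Fourier--Lebesgue and modulation spaces: on compactly supported distributions the modulation norm is insensitive to the first (spatial) Lebesgue exponent. I would obtain it from the short-time Fourier transform with a compactly supported window $\phi$, using that for $a\in\mascE'$ the section $x\mapsto V_\phi a(x,\xi)$ is supported in one fixed compact set, uniformly in $\xi$; a standard localization argument then gives $\nm a{M^{p,q}}\asymp\nm a{M^{p_0,q}}$ for all $p,p_0$ with constants depending only on the support, and identifies the common space with $\mathscr FL^q\cap\mascE'$.

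For the right-hand inclusion $M^{p,q}\cap\mascE'\subseteq s_{t,q}\cap\mascE'$ I would use the central equality to replace the given first index by any convenient one, and then feed the symbol into Theorem~\ref{thmOpSchatten}. With trivial weights that result gives $M^{p_1,q_1}\subseteq s_{t,q}$ as soon as $p_1\le q$ and $q_1\le\min(q,q')$; choosing $p_1$ as small as we please (permitted by the local coincidence) and $q_1=q$, the requirement $q\le\min(q,q')$ holds exactly when $q\le2$, which settles that range. For $q>2$ I expect to recover the inclusion by Schatten-class duality and real interpolation from the conjugate range $q'\le2$, in the spirit of the arguments in Section~\ref{sec2}.

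The left-hand inclusion $s^q_{t,q}\cap\mascE'\subseteq\mathscr FL^q\cap\mascE'$ is the heart of the matter. I would start from the canonical expansion $a=\sum_j\lambda_jW^t_{f_j,g_j}$ of Proposition~\ref{SchattenExp}, where $\{\lambda_j\}\in\ell^q$ is non-increasing and $\{f_j\},\{g_j\}\in\ON_d$ are at the same time bounded in $M^{2q}(\Ren)$, and aim at $a\in M^{p,q}\cap\mascE'=\mathscr FL^q\cap\mascE'$. The decisive point is to reach the \emph{sharp} second index $q$: the bare membership $s^q_{t,q}\subseteq s_{t,q}$ only yields, through the second embedding of Theorem~\ref{thmOpSchatten}, the weaker space $\mathscr FL^{q'}$, so the $M^{2q}$-boundedness of the singular functions must be used in an essential way. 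My plan is to realize $\op_t(a)$ through its Gabor matrix $A$ as in Lemma~\ref{PsDoDisc}, and to show that the extra $M^{2q}$-regularity forces $A\in\mathbb U^{q}$ --- a converse, in the presence of eigenfunction regularity, to the embedding $\mathbb U^q\subseteq\mascI_q$ of Theorem~\ref{MatrixSchatten} --- whence $a\in M^{q,q}$. When $q\le1$ the assembly is comparatively soft: a uniform bilinear bound for the pieces together with the $q$-quasi-triangle inequality and $\{\lambda_j\}\in\ell^q$ already closes the estimate.

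The main obstacle is precisely this summation when $q>1$. There $\mathscr FL^q$ is locally convex and the triangle inequality only controls $\nm a{\mathscr FL^q}$ by $\sum_j\lambda_j\,\nm{W^t_{f_j,g_j}}{\mathscr FL^q}$, which demands $\{\lambda_j\}\in\ell^1$ and is useless for $\{\lambda_j\}\in\ell^q$. The orthonormality of $\{f_j\}$ and $\{g_j\}$ has to be exploited to make the pieces $W^t_{f_j,g_j}$ behave almost orthogonally --- a genuinely Schatten-type summation rather than a triangle-inequality one --- and it is the interaction of this near-orthogonality with the $M^{2q}$ window bound that produces the exact exponent $q$. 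I expect the cleanest route to run this argument on the Gabor-matrix side, where near-orthogonality and the factorization results of Section~\ref{sec2} are available, and then to transport the conclusion back to the symbol through the central equality; the remaining ranges $q>2$ in both outer inclusions I would again reach by duality and interpolation from the conjugate case.
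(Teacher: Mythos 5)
Your outer architecture (the middle equality from the local coincidence of $\mascF L^q$ and modulation spaces on $\mascE '$, and the right-hand inclusion from Theorem \ref{thmOpSchatten} after trading the first Lebesgue index) matches the paper, which quotes \cite[Proposition 4.3]{Toft12} and Theorem \ref{thmOpSchatten} for exactly these two links, and disposes of the Banach range of the first inclusion by citing \cite[Corollary 2.12]{Toft1}. The gap is in the first inclusion in the quasi-Banach range, which is the new content. Even for $q\le 1$ your ``uniform bilinear bound for the pieces'' is not available: since $\mascF W^t_{f,g}$ is, up to a phase and a linear change of coordinates, the ambiguity function, one has $\nm {W^t_{f,g}}{\mascF L^q}\asymp \nm {V_gf}{L^q}$, and Lemma \ref{STFTWindows} controls this only by $\nm f{M^q}\nm g{M^q}$ --- whereas the definition of $s^q_{t,q}$ gives boundedness of the eigenfunctions merely in $M^{2q}\supseteq M^q$, which is strictly weaker. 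So the $q$-quasi-triangle inequality has nothing to sum; note also that your term-by-term sketch never uses $a\in \mascE '(\rr {2d})$, which should be a warning sign, since the inclusion without compact support is not being claimed.

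The paper's actual device, absent from your proposal, is a localization-plus-convolution argument that doubles the exponent so that the $M^{2q}$ bounds become exactly the right currency. Choose $\fy \in C_0^\infty (\rr {2d})$ with $\fy =1$ on $\supp a$; then
$$
\nm a{\mascF L^q}=\nm {\fy a}{\mascF L^q}\asymp \nm {(\mascF _\sigma \fy )*(\mascF _\sigma a)}{L^q},
$$
and one invokes the bilinear convolution estimate $s^q_{t,q}\times s^q_{t,q}\to L^q$ of Proposition \ref{SchattenConv}. That estimate works because $W_{f_1,f_2}*W_{g_1,g_2}$ is pointwise a product of two short-time Fourier transforms $V_{\check f_{2}}g_{1}\cdot V_{\check f_{1}}g_{2}$; Cauchy--Schwarz splits the $L^q$ norm into two $L^{2q}$ norms, each controlled via Lemma \ref{STFTWindows} by the $M^{2q}$ norms of the eigenfunctions --- this is precisely where the extra regularity built into $s^q_{t,q}$ enters. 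Together with $\mascF _\sigma \fy \in s^q_{t,q}$ (Proposition \ref{SchattenExp}) and the $\mascF _\sigma$-invariance of the $s^q_{t,q}$ quasi-norm (Lemma \ref{basicLemmaspp}), this closes the estimate with no summation issue at all. Your alternative route --- forcing $A\in \mathbb U^{q}$ for the Gabor matrix out of ``near-orthogonality'' of the Wigner pieces --- is left entirely speculative, and you yourself concede you have no mechanism for $q>1$; the paper sidesteps that case by quoting the earlier Banach-range result rather than reproving it.
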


\par

We need some preparations for the proof. First we recall the following facts for the
harmonic oscillator $H=H_d=|x|^2-\Delta$ on $\mascS '(\rr d)$. We omit the proof
since the result is a special case of \cite[Theorem 3.5]{BoTo}.

\par

\begin{lemma}\label{LemmaHarmOscBij}
Let $p,q\in [1,\infty ]$ and let $\omega ,\vartheta _N\in \mascP (\rr {2d})$ be such
that $\vartheta _N(x,\xi )= \eabs {(x,\xi )}^N$, where $N$ is an integer. Then
$H_d^N$ is a homeomorphism on $\mascS (\rr d)$, on $\mascS '(\rr d)$,
and from $M^{p,q}_{(\vartheta _{2N} \omega )}(\rr d)$ to $M^{p,q}_{(\omega )}(\rr d)$.
\end{lemma}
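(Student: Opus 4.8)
The plan is to realize $H_d^N$ as an elliptic operator in the Weyl calculus and then transport its invertibility to the modulation space scale; the restriction $p,q\in [1,\infty ]$ is exactly what keeps all the spaces Banach, so I may work throughout with the local convex (non-quasi-Banach) boundedness theory. Writing $|X|^2=|x|^2+|\xi |^2$ for $X=(x,\xi )\in \rr {2d}$, the harmonic oscillator is the Weyl operator $H_d=\op ^w(|X|^2)$, and its Weyl symbol is elliptic in the Shubin class $S(\vartheta _2,g)$, where $g$ is the isotropic metric $g_X=\eabs{X}^{-2}(|dx|^2+|d\xi |^2)$ (which is strongly feasible) and $\vartheta _s(X)=\eabs{X}^s$ is $(\sigma ,g)$-temperate for every $s$. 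By the composition rules of the H{\"o}rmander--Weyl calculus (cf. Sections 18.4--18.6 in \cite{Ho1} and \cite{Sh}), I would first record that $H_d^N=\op ^w(b_N)$ for some $b_N\in S(\vartheta _{2N},g)$ with $b_N\asymp \vartheta _{2N}$, so that $H_d^N$ is again elliptic, now of order $2N$.

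Next I would treat the invertibility. Since the spectrum of $H_d$ equals $\{ \, 2k+d\, ;\, k\ge 0\, \} \subseteq [d,\infty )$, both $H_d$ and $H_d^N$ are boundedly invertible on $L^2(\rr d)$. The key structural input is that a globally invertible elliptic operator in $S(\vartheta _{2N},g)$ has its inverse inside the calculus with the reciprocal weight, that is $(H_d^N)^{-1}=\op ^w(c_N)$ for some $c_N\in S(\vartheta _{-2N},g)$; this is a Beals-type (parametrix) statement in the Shubin calculus (cf. \cite{Sh}). I expect this to be the main obstacle, since it is the only genuinely non-elementary ingredient: everything else is bookkeeping once $c_N$ is available in the correct symbol class.

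I would then pass to modulation spaces using the weighted boundedness of operators with symbols in $S(m,g)$, which is the same mechanism that underlies Section \ref{sec4} (cf. \cite{Toft4,BuTo}). Applying it with $m=\vartheta _{2N}$ gives that $\op ^w(b_N)$ maps $M^{p,q}_{(\vartheta _{2N}\omega )}(\rr d)$ continuously into $M^{p,q}_{(\omega )}(\rr d)$, while applying it with $m=\vartheta _{-2N}$ and the identity $\vartheta _{-2N}\cdot (\vartheta _{2N}\omega )=\omega$ gives that $\op ^w(c_N)$ maps $M^{p,q}_{(\omega )}(\rr d)$ continuously into $M^{p,q}_{(\vartheta _{2N}\omega )}(\rr d)$. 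Since $\op ^w(b_N)$ and $\op ^w(c_N)$ are mutually inverse already on $\mascS '(\rr d)$, these two continuous maps are inverse to each other, whence $H_d^N$ is a homeomorphism from $M^{p,q}_{(\vartheta _{2N}\omega )}(\rr d)$ onto $M^{p,q}_{(\omega )}(\rr d)$.

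Finally, the statements on $\mascS (\rr d)$ and $\mascS '(\rr d)$ I would obtain from the Hermite eigenbasis of $H_d$ (eigenvalues $2|\alpha |+d$): division by these polynomially growing eigenvalues preserves the rapid decay of Hermite coefficients, so $H_d^N$ maps $\mascS (\rr d)$ bijectively onto itself with continuous inverse, and the assertion for $\mascS '(\rr d)$ follows by transposition since $H_d^N$ is formally self-adjoint. Alternatively, both follow from the modulation space part together with the representations $\mascS (\rr d)=\bigcap _N M^2_{(\vartheta _N)}(\rr d)$ and $\mascS '(\rr d)=\bigcup _N M^2_{(\vartheta _{-N})}(\rr d)$ as projective and inductive limits.
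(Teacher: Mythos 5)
Your argument is correct. The paper gives no proof of this lemma, referring instead to \cite[Theorem 3.5]{BoTo}, and the route you take --- realizing $H_d^N$ as a globally elliptic Weyl operator in the Shubin class $S(\vartheta _{2N},g)$ for the isotropic metric $g$, invoking $L^2$-invertibility (spectrum $\subseteq [d,\infty )$) together with a Beals/parametrix theorem to place $(H_d^N)^{-1}$ in $S(\vartheta _{-2N},g)$, and then applying the weighted $M^{p,q}$-continuity of such operators in both directions --- is essentially the standard lifting argument underlying that reference; the only imprecision is that $b_N\asymp \vartheta _{2N}$ holds only for large $|X|$ (the symbol vanishes at the origin), which is the usual notion of ellipticity in the Shubin calculus and is harmless here precisely because the global invertibility is supplied spectrally rather than symbolically.
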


\par

The next result concerns Schwartz kernels of linear operators.

\par

\begin{prop}\label{SchwartzKernels}
Let $T$ be a linear and continuous operator from
$L^2(\rr {d_1})$ to $L^2(\rr {d_2})$ and such that
the kernels of $T^*\circ T$ and $T\circ T^*$
belong to $\mascS (\rr {2d_1})$ and
$\mascS (\rr {2d_2})$, respectively. Then the kernel
of $T$ belongs to $\mascS (\rr {d_2+d_1})$.
\end{prop}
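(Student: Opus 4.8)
The plan is to prove that the kernel $K_T$ of $T$ lies in $\mascS(\rr{d_2+d_1})$ by reducing this to the assertion that $H_{d_2}^M\,T\,H_{d_1}^N$ is Hilbert--Schmidt for every pair of non-negative integers $M,N$, where $H_{d_j}=|x|^2-\Delta$ is the harmonic oscillator on $\rr{d_j}$. The link is Lemma \ref{LemmaHarmOscBij}: with $p=q=2$ and trivial weight it identifies the Shubin--Sobolev space $M^2_{(\vartheta _{2N})}$ with $\{f\,;\,H_{d}^{N}f\in L^2\}$, and since $\mascS=\bigcap_N M^2_{(\vartheta_{2N})}$ this gives $\mascS(\rr{d_2+d_1})=\bigcap_{L\ge 0}\{K\,;\,H_{d_2+d_1}^{L}K\in L^2\}$, where under the splitting $x\in\rr{d_2}$, $y\in\rr{d_1}$ we have $H_{d_2+d_1}=H_{d_2}\otimes I+I\otimes H_{d_1}$. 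Expanding $(H_{d_2}\otimes I+I\otimes H_{d_1})^{L}$ binomially (the two summands commute), and noting that $H_{d_2,x}^{M}H_{d_1,y}^{N}K_T$ is exactly the kernel of $H_{d_2}^{M}T H_{d_1}^{N}$, the membership $K_T\in\mascS$ becomes equivalent to $H_{d_2}^{M}T H_{d_1}^{N}\in\mascI_2$ for all $M,N\ge 0$.

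First I would fix the Hermite bases. Let $\{h_\alpha\}_\alpha$ and $\{k_\beta\}_\beta$ be the Hermite orthonormal bases of $L^2(\rr{d_1})$ and $L^2(\rr{d_2})$, with $H_{d_1}h_\alpha=\lambda_\alpha h_\alpha$, $H_{d_2}k_\beta=\rho_\beta k_\beta$, $\lambda_\alpha=2|\alpha|+d_1\ge 1$ and $\rho_\beta=2|\beta|+d_2\ge 1$. Writing $t_{\beta\alpha}=\scal{Th_\alpha}{k_\beta}$, a direct computation gives
\begin{equation*}
\nm{H_{d_2}^{M}T H_{d_1}^{N}}{\mascI_2}^2=\sum_{\alpha,\beta}\rho_\beta^{2M}\lambda_\alpha^{2N}|t_{\beta\alpha}|^2 .
\end{equation*}
Since $S_2=TT^*$ has a Schwartz kernel it is trace class, so $T$ is already Hilbert--Schmidt and the $t_{\beta\alpha}$ are square summable. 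Moreover, because the kernel of $S_1=T^*T$ lies in $\mascS(\rr{2d_1})$, applying $H_{d_1}$ in both variables keeps it Schwartz, whence $H_{d_1}^{N}S_1 H_{d_1}^{N}$ is trace class and
\begin{equation*}
\sum_{\alpha,\beta}\lambda_\alpha^{2N}|t_{\beta\alpha}|^2=\trr{H_{d_1}^{N}S_1 H_{d_1}^{N}}<\infty ,\qquad N\ge 0 .
\end{equation*}
Symmetrically, the Schwartz kernel of $S_2$ yields $\sum_{\alpha,\beta}\rho_\beta^{2M}|t_{\beta\alpha}|^2=\trr{H_{d_2}^{M}S_2 H_{d_2}^{M}}<\infty$ for all $M\ge 0$.

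The final step is to pass from these two one--sided bounds to the two--sided one. Because $\rho_\beta,\lambda_\alpha\ge 1$, one has the elementary pointwise inequality $\rho_\beta^{2M}\lambda_\alpha^{2N}\le\rho_\beta^{4M}+\lambda_\alpha^{4N}$, and since every $|t_{\beta\alpha}|^2$ is non-negative this gives
\begin{equation*}
\nm{H_{d_2}^{M}T H_{d_1}^{N}}{\mascI_2}^2\le\sum_{\alpha,\beta}\rho_\beta^{4M}|t_{\beta\alpha}|^2+\sum_{\alpha,\beta}\lambda_\alpha^{4N}|t_{\beta\alpha}|^2<\infty
\end{equation*}
by the two previous estimates. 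Hence $H_{d_2}^{M}T H_{d_1}^{N}\in\mascI_2$ for all $M,N$, and therefore $K_T\in\mascS(\rr{d_2+d_1})$.

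I expect the main obstacle to be exactly this decoupling of the two weights. A naive operator--algebraic attempt is circular, since $\nm{H_{d_2}^{M}T H_{d_1}^{N}}{\mascI_2}^2=\trr{H_{d_1}^{2N}T^*H_{d_2}^{2M}T}$ and the hypotheses only control the one--sided products $T^*T=S_1$ and $TT^*=S_2$; there is no purely algebraic way to detach the middle factor $H_{d_2}^{2M}$ from $T$. The resolution is to work in the Hermite bases, where $H_{d_1}$ and $H_{d_2}$ are simultaneously diagonal on their respective factors and the mixed weight factorizes as $\rho_\beta^{2M}\lambda_\alpha^{2N}$; the trivial inequality above then suffices. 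What remains is routine: justifying the trace identities, using Tonelli to reorder the non-negative sums, and recording at the outset that $T$ is Hilbert--Schmidt (from $S_2\in\mascI_1$) so that its matrix $(t_{\beta\alpha})$ may be manipulated.
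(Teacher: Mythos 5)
Your proof is correct, and it takes a genuinely different route from the paper's. The paper starts from the same quadratic-form identities $\nm{Tf}{L^2}^2=((T^*\circ T)f,f)$ and $\nm{T^*g}{L^2}^2=((T\circ T^*)g,g)$ and uses Lemma \ref{LemmaHarmOscBij} to turn the two Schwartz-kernel hypotheses into the one-sided continuity statements $T\in \maclB (M^2_{(\omega _{1,-2N})},L^2)$ and, by duality from the second one, $T\in \maclB (L^2,M^2_{(\omega _{2,2N})})$; the decoupling problem you correctly single out as the crux is then resolved by \emph{interpolation} between these two statements, giving $T\in \maclB (M^2_{(\omega _{1,-N})},M^2_{(\omega _{2,N})})$ for all $N$ and hence $T\in \maclB (\mascS ',\mascS )$, which yields the Schwartz kernel. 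You instead diagonalize both harmonic oscillators in their Hermite bases and replace the interpolation step by the pointwise inequality $\rho _\beta ^{2M}\lambda _\alpha ^{2N}\le \rho _\beta ^{4M}+\lambda _\alpha ^{4N}$ applied to the non-negative summands $|t_{\beta \alpha}|^2$, the two resulting sums being exactly the traces of $H_{d_2}^{2M}(T\circ T^*)H_{d_2}^{2M}$ and $H_{d_1}^{2N}(T^*\circ T)H_{d_1}^{2N}$, which are finite because operators with Schwartz kernels are trace class. Both arguments are sound; yours is more elementary and self-contained (only Parseval, positivity, and Tonelli, no interpolation theory of weighted modulation spaces), at the price of being tied to the explicit Hermite diagonalization, whereas the paper's argument is basis-free and delivers the mapping property $T\in \maclB (\mascS ',\mascS )$ directly in a form that transfers to other scales of spaces.
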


\par

The result should be available in the literature. In order to be
self-contained we here present a
proof, obtained in collaboration with A. Holst
at Lund University, Sweden.

\par

\begin{proof}
Let $N\in \mathbf N$, $\omega _{j,r}(x,\xi )= \eabs {(x,\xi )}^{r}$ when $x,\xi \in
\rr {d_j}$ and $r\in \mathbf R$, and let $f_j\in \mascS
(\rr {d_j})$, $j=1,2$.
Then $H^N\circ S\circ H^N$ is an operator with kernel in $\mascS$, if and only if $S$
is an operator with kernel in $\mascS$.

\par

By the assumptions and Lemma \ref{LemmaHarmOscBij} we obtain
\begin{align*}
\nm {Tf_1}{L^2(\rr {d_2})}^2 &= ((T^*\circ T)f_1,f_1)_{L^2(\rr {d_1})}
\lesssim \nm {f_1}{M^2_{(\omega _{1,-2N})}}^2
\intertext{and}
\nm {T^*f_2}{L^2(\rr {d_1})}^2 &= ((T\circ T^*)f_2,f_2)_{L^2(\rr {d_2})}
\lesssim \nm {f_2}{M^2_{(\omega _{2,-2N})}}^2.
\end{align*}
Hence
$$
T\in \maclB (M^2_{(\omega _{1,-2N})}(\rr {d_1}),
L^2(\rr {d_2}))
\quad \text{and} \quad T^* \in \maclB (M^2_{(\omega _{2,-2N})}
(\rr {d_2}),L^2(\rr {d_1})).
$$

\par

By duality it also follows that $T\in \maclB (L^2(\rr {d_1}),
M^2_{(\omega _{2,2r})}(\rr {d_2}))$, since the dual of
$M^2_{(\omega _{j,-N})}(\rr {d_j})$ equals $M^2_{(\omega _{j,N})}(\rr {d_j})$
when the $L^2$ form is used (cf. e.{\,}g. \cite[Theorem 11.3.6]{Gc2}).
%

\par

By interpolation between these results we get
$$
T\in \maclB (M^2_{(\omega _{1,-N})}(\rr {d_1}),
M^2_{(\omega _{2,N})}(\rr {d_2})).
$$
and since
$$
\bigcap _{r\in \mathbf R} M^2_{(\omega _{j,N})}(\rr {d_j})
=\mascS (\rr {d_j})
\quad \text{and}\quad
\bigcup _{r\in \mathbf R} M^2_{(\omega _{j,r})}(\rr {d_j})
=\mascS '(\rr {d_j})
$$
when $j=1,2$ also in topological sense in view of e.{\,}g.
\cite {Toft3}, we obtain $T\in \maclB (\mascS '(\rr {d_1}) , \mascS (\rr {d_2}))$.
This implies that the kernel of $T$ belongs to $\mascS (\rr {d_2+d_1})$.
\end{proof}

\par

The next result extends in several ways Lemma 4.1.2 in \cite{Toft0} and
concerns suitable Wigner distribution expansions (see \eqref{wignertdef}).

\par

\begin{prop}\label{SchattenExp}
Let $a\in \mascS (\rr {2d})$, $t\in \mathbf R$,
$p,q\in (0,\infty ]$ and let $H=|x|^2-\Delta$ be the harmonic oscillator on
$\rr d$. Then
\begin{equation}\label{SymbolSpectralExpansion}
a=\sum _{j=0}^\infty \lambda _j W^t_{f_j,g_j},
\end{equation}
for some non-negative and non-increasing
$\{\lambda _j\} _{j=0}^\infty \subseteq
\mathbf R$, $\{ f_j \} _{j=0}^\infty \in \ON (L^2(\rr d))$
and $\{ g_j \} _{j=0}^\infty \in \ON (L^2(\rr d))$ such that
\begin{gather}
\lambda _j \ge 0,\quad f_j,g_j\in \mascS (\rr d),\quad
j\ge 0,\notag
\intertext{and}
\sum _{j=0}^\infty \lambda _j^p \nm {H^Nf_j}{M^q}
\nm {H^Ng_j}{M^q}<\infty ,
\quad \text{when}\quad
N\ge 0.\label{SymbSpecExpEst}
\end{gather}
\end{prop}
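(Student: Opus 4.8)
The plan is to realize $a$ through the operator $T=\op_t(a)$ and use its singular value (Schmidt) decomposition. Since $a\in\mascS(\rr{2d})$, the kernel $K_{a,t}$ in \eqref{atkernel} lies in $\mascS(\rr{2d})$, so $T$ maps $\mascS'(\rr d)$ continuously into $\mascS(\rr d)$ and, in particular, is compact on $L^2(\rr d)$. Writing the Schmidt decomposition $Th=\sum_{j\ge0}\sigma_j(h,g_j)f_j$ with $\sigma_0\ge\sigma_1\ge\cdots\ge0$ and $\{f_j\}_{j\ge0},\{g_j\}_{j\ge0}\in\ON(L^2(\rr d))$, the relations $\sigma_jf_j=Tg_j$ and $\sigma_jg_j=T^*f_j$ together with the smoothing of $T$ and $T^*$ force $f_j,g_j\in\mascS(\rr d)$. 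Comparing with \eqref{trankone}, each rank-one summand equals $\op_t\big((2\pi)^{d/2}\sigma_jW^t_{f_j,g_j}\big)$, so Remark \ref{BijKernelsOps} yields \eqref{SymbolSpectralExpansion} with $\lambda_j=(2\pi)^{d/2}\sigma_j$, which is non-negative and non-increasing.

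It remains to establish \eqref{SymbSpecExpEst}. First I would reduce the $M^q$ norms to weighted $L^2$ norms. By Lemma \ref{LemmaHarmOscBij} (with $\omega=1$) one has $\nm h{M^2_{(\vartheta_{2M})}}\asymp\nm{H^Mh}{L^2}$, while for $M$ large enough $M^2_{(\vartheta_{2M})}(\rr d)\subseteq M^q(\rr d)$: when $q\ge2$ this is immediate from Proposition \ref{p1.4B}~(2), and when $q<2$ it follows from H\"older's inequality applied to the short-time Fourier transform, choosing $M$ so that $\vartheta_{2M}^{-1}$ is integrable to the relevant power. Hence, with $K=N+M$, one gets $\nm{H^Nf_j}{M^q}\lesssim\nm{H^Kf_j}{L^2}$ and likewise for $g_j$, so by Cauchy--Schwarz it suffices to bound $\sum_j\sigma_j^p\nm{H^Kf_j}{L^2}^2$ and the analogous sum for $g_j$. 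Since $\{f_j\}$ is the eigenbasis of $TT^*$ with eigenvalues $\sigma_j^2$, these sums are precisely the traces $\trr{H^{2K}(TT^*)^{p/2}}$ and $\trr{H^{2K}(T^*T)^{p/2}}$.

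The hard part is the finiteness of these traces when $p<2$, where a termwise estimate fails: the bound $\nm{H^Kf_j}{L^2}\lesssim\sigma_j^{-1}$ only produces the divergent series $\sum_j\sigma_j^{p-2}$, so genuine cancellation must be used. Here I would exploit that $T$ is infinitely smoothing: $C:=H^LTT^*H^L$ is bounded for every $L$ (its kernel is Schwartz), whence $TT^*\le\nmm{C}H^{-2L}$ as positive operators. As $x\mapsto x^{p/2}$ is operator monotone for $p/2\in(0,1)$, this gives $(TT^*)^{p/2}\le\nmm{C}^{p/2}H^{-Lp}$, and therefore
\begin{equation*}
\trr{H^{2K}(TT^*)^{p/2}}\le\nmm{C}^{p/2}\,\trr{H^{2K-Lp}}
=\nmm{C}^{p/2}\sum_{\alpha\in\nn d}(2|\alpha|+d)^{2K-Lp},
\end{equation*}
which is finite once $L$ is chosen so large that $Lp>2K+d$. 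For $p\ge2$ the same trace is controlled more directly by the operator inequality $(TT^*)^{p/2}\le\nmm{TT^*}^{p/2-1}TT^*$, giving the bound $\nmm{TT^*}^{p/2-1}\nm{H^KT}{\HS}^2<\infty$ since $H^KT$ has a Schwartz kernel and is thus Hilbert--Schmidt.

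The identical arguments apply verbatim with $T^*T$ and $H^KT^*$ in place of $TT^*$ and $H^KT$, so combining the two factors in the Cauchy--Schwarz bound yields \eqref{SymbSpecExpEst} for every $N\ge0$. Finally, these same estimates (taken with $N=0$, together with the control of the Schwartz seminorms of the partial sums through the weighted norms) guarantee that the series in \eqref{SymbolSpectralExpansion} converges in $\mascS(\rr{2d})$, completing the proof.
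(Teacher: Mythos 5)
Your proposal is correct and its conclusion is reached, but the decisive estimate is obtained by a genuinely different mechanism than in the paper. The opening and closing steps coincide: both arguments start from the Schmidt decomposition of $T=\op _t(a)$ (whose kernel is Schwartz, so that $f_j,g_j\in \mascS$ for $\sigma _j>0$), reduce the $M^q$-norms to weighted $L^2$-norms $\nm {H^Kf_j}{L^2}$ via Lemma \ref{LemmaHarmOscBij} together with H{\"o}lder/embedding arguments, and handle a general symbol by passing to the positive operators $TT^*$ and $T^*T$ and using Cauchy--Schwarz. The difference lies in how $\sum _j\sigma _j^p\nm {H^Kf_j}{L^2}^2$ is controlled for small $p$: the paper bootstraps from the known case $p=1$, $q=2$ (Lemma 4.1.2 in \cite{Toft0}) by forming the roots $T_N=T^{1/2N}$ of the positive operator, invoking Proposition \ref{SchwartzKernels} to see that their kernels --- hence their Weyl symbols $\sum _j\lambda _j^{1/2N}W_{f_j,f_j}$ --- remain Schwartz, and applying the $p=1$ result to those symbols; you instead prove the bound directly from the operator inequality $TT^*\le \nmm {C}H^{-2L}$, the L{\"o}wner--Heinz monotonicity of $x\mapsto x^{p/2}$ for $p\le 2$, and the explicit convergence of $\trr {H^{2K-Lp}}$ read off from the Hermite spectrum. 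Your route is more self-contained (no external lemma, no root construction, no need for Proposition \ref{SchwartzKernels}), at the price of invoking operator monotonicity and some care with traces of products involving the unbounded $H^K$ --- manageable here because all terms are non-negative and one may pass between the Hilbert--Schmidt norms of $H^K(TT^*)^{p/4}$ and $(TT^*)^{p/4}H^K$. The one loose end (present in the paper's proof as well) is that for indices $j$ with $\sigma _j=0$ the completing orthonormal vectors must be chosen in $\mascS (\rr d)$, e.g.\ among Hermite functions orthogonal to the range; this is harmless since those terms do not contribute to \eqref{SymbolSpectralExpansion} or \eqref{SymbSpecExpEst}.
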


\par

\begin{proof}
By the spectral theorem of compact operators,
\eqref{SymbolSpectralExpansion} holds true for
some non-negative and non-increasing sequence
$\{\lambda _j\} _{j=0}^\infty$, and some
$\{ f_j \} _{j=0}^\infty$ and $\{ g_j \} _{j=0}^\infty$ in
$\ON _d$. Since
$$
e^{i(t-1/2)\scal {D_\xi}{D_x}}W^t_{f,g}=W_{f,g}
$$
and $e^{i(t-1/2)\scal {D_\xi}{D_x}}$ is continuous on $\mascS (\rr{2d})$,
it suffices to consider the Weyl case, $t=1/2$.

\par

First we assume that $T\equiv \op ^w(a)\ge 0$, giving that
\eqref{SymbolSpectralExpansion} holds with $g_j=f_j$.
The result is true for $p=1$ and $q=2$ in view of \cite[Lemma 4.1.2]{Toft0}.
Since the kernel of $T$ belongs to $\mascS$, Proposition
\ref{SchwartzKernels} shows that the kernel of $T_N = T^{1/2N}$ belongs
to $\mascS$ for every $N\ge 1$. Furthermore, if $a_N$ is the Weyl symbol
of $T_N$, then $a_N\in \mascS (\rr {2d})$, and by straight-forward
computations we get
$$
a_N = \sum _j \lambda _j^{1/2N}W_{f_j,f_j}.
$$
By \cite[Lemma 4.1.2]{Toft0} we get
$$
\sum _j \lambda _j^{1/2N}\nm {H^Nf_j}{L^2}^2<\infty 
$$
for every $N\ge 0$, and the result follows in the case $p>0$ and $q=2$.

\par

Next assume that $q\in (0,2)$, and let $\omega _r =\eabs \cdo ^r
\in \mascP (\rr {2d})$,
$q_0=(2q)/(2-q)$ and $N_0>dq_0$ be an integer. Then
$\omega _{-2N_0}\in L^{q_0}(\rr {2d})$, and H{\"o}lder's inequality
gives
\begin{equation*}
\nm {H^Nf}{M^q}\lesssim \nm {\omega _{-2N_0}}{L^{q_0}}
\nm {H^Nf}{M^2_{(\omega _{2N_0})}} \asymp \nm {H^{N+N_0}f}{L^2},
\end{equation*}
when $f\in \mascS (\rr d)$, and the result follows in this case from the
case $q=2$.

\par

If instead $q\ge2$, then $\nm {H^Nf}{M^q}\lesssim \nm {H^Nf}{L^2}$
for every admissible $f$, and the result again follows from the case
when $q=2$.

\par

The assertion therefore follows if in the case
$\op _t(a)\ge 0$.

\par

Finally assume that $a\in \mascS (\rr {2d})$ is general. The Weyl symbol
of the operators $T^*\circ T$ and $T\circ T^*$ 
are given by
$$
b=\sum \lambda _j^2W_{f_j,f_j}\quad \text{and}\quad
c=\sum \lambda _j^2W_{g_j,g_j},
$$
respectively, and belong to $\mascS (\rr {2d})$, in view of Proposition
\ref{SchwartzKernels}. Since $T^*\circ T$
and $T\circ T^*$ are positive semi-definite, it follows from the first
part of the proof that
$$
\sum _j \lambda _j^p\nm {H^Nf_j}{M^q}^2<\infty 
\quad \text{and}\quad
\sum _j \lambda _j^p\nm {H^Ng_j}{M^q}^2<\infty 
$$ 
hold for every $p,q\in (0,\infty ]$ and $N\ge 0$. The estimate
\eqref{SymbSpecExpEst} now follows from these estimates
and Cauchy-Schwartz inequality.
\end{proof}

\par

We also need the following result related to Theorem 3.1 in \cite{GaSa}.

\par

\begin{lemma}\label{STFTWindows}
Let $p\in (0,2]$ and let $\omega ,\omega _1,\omega _2\in \mascP
_E(\rr {2d})$ be such that
$$
\omega (X_1-X_2)\lesssim \omega _1(X_1)\omega _2(X_2),\qquad X_1,X_2\in \rr {2d}.
$$
Then the map $(f,\phi )\mapsto V_\phi f$ is continuous from $M^p_{(\omega _1)}(\rr d)
\times M^p_{(\omega _2)}(\rr d)$ to $L^p_{(\omega )}(\rr {2d})$, and
\begin{equation}\label{NormsEsts}
\nm {V_\phi f}{L^p_{(\omega )}} \le C \nm f{M^p_{(\omega _1)}}
\nm \phi{M^p_{(\omega _2)}},
\end{equation}
where the constant $C$ is independent of $f\in M^p_{(\omega _1)}(\rr d)$ and
$\phi \in M^p_{(\omega _2)}(\rr d)$.
\end{lemma}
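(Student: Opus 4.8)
The plan is to reduce \eqref{NormsEsts} to a convolution inequality and then split the range at $p=1$. Fix $\phi _0\in \Sigma _1(\rr d)$ with $\nm {\phi _0}{L^2}=1$, chosen (as in Remark \ref{RemThmS}) so that it generates a Gabor frame over a lattice $\Lambda$. I would start from the standard pointwise change-of-window estimate
$$
|V_\phi f(X)|\lesssim \big ( |V_{\phi _0}f|*|V_{\phi _0}\phi |^{\vee}\big )(X),\qquad H^{\vee}(Y)=H(-Y),
$$
(cf. Lemma 11.3.3 in \cite{Gc2}). Writing the right-hand side as $\int |V_{\phi _0}f(Y)|\,|V_{\phi _0}\phi (Y-X)|\,dY$ and applying the hypothesis with $X_1=Y$ and $X_2=Y-X$, so that $X_1-X_2=X$, I obtain $\omega (X)\lesssim \omega _1(Y)\omega _2(Y-X)$ and therefore
$$
\omega (X)|V_\phi f(X)|\lesssim (F*G^{\vee})(X),\quad F=\omega _1|V_{\phi _0}f|,\quad G=\omega _2|V_{\phi _0}\phi |.
$$
Since $\nm F{L^p}=\nm {V_{\phi _0}f}{L^p_{(\omega _1)}}\asymp \nm f{M^p_{(\omega _1)}}$ and likewise $\nm {G^{\vee}}{L^p}=\nm G{L^p}\asymp \nm \phi {M^p_{(\omega _2)}}$, the whole statement is reduced to
$$
\nm {F*G^{\vee}}{L^p}\lesssim \nm F{L^p}\nm G{L^p}
$$
for functions $F,G$ that are weighted short-time Fourier transforms.

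For $p\in (0,1]$ I would exploit that $F$ and $G$ are \emph{not} arbitrary elements of $L^p$: because $\phi _0\in \Sigma _1(\rr d)$ is smooth and the weights are moderate, $F$ and $G$ lie in the Wiener amalgam space $W(L^\infty ,\ell ^p)(\rr {2d})$ of locally bounded functions with $\ell ^p$ decay over $\Lambda ^2$, with amalgam quasi-norm comparable to the plain $L^p$ quasi-norm; this is just the discretization of Proposition \ref{ConseqThmS} together with the local control of $V_{\phi _0}f$ between lattice points. The amalgam convolution then factors into a harmless local piece $L^\infty *L^\infty$ over a cell and a global piece governed by $\ell ^p*\ell ^p\hookrightarrow \ell ^p$, the latter being valid precisely because $t\mapsto t^p$ is subadditive for $p\le 1$. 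Hence $F*G^{\vee}\in W(L^\infty ,\ell ^p)\hookrightarrow L^p$ with the required bound, and \eqref{NormsEsts} follows for $p\le 1$. (Alternatively one may quote the Young-type inequalities of \cite{GaSa}.)

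For $1<p\le 2$ this mechanism is genuinely unavailable: the naive Young inequality gives $L^p*L^p\subseteq L^p$ only at $p=1$, and at $p=2$ the majorization by $F*G^{\vee}$ is irreparably lossy, since the convolution of two $L^2$ functions need not lie in $L^2$, even when they are moduli of short-time Fourier transforms. I would therefore treat the endpoint $p=2$ separately, deriving the weighted bound $\nm {V_\phi f}{L^2_{(\omega )}}\lesssim \nm f{M^2_{(\omega _1)}}\nm \phi {M^2_{(\omega _2)}}$ from the exact orthogonality (Moyal) relation for the short-time Fourier transform combined with the weight hypothesis, and then fill the open interval $1<p<2$ by complex interpolation of the bilinear map $(f,\phi )\mapsto V_\phi f$ between the two Banach endpoints $p=1$ and $p=2$ (Theorems 4.4.1 and 5.6.3 in \cite{BeLo}), the modulation spaces interpolating as in \cite{BS}.

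The hard part will be the weighted endpoint $p=2$. Counting translated Gaussians shows that any estimate which sees only $|V_{\phi _0}f|$ and $|V_{\phi _0}\phi |$ overestimates $\nm {V_\phi f}{L^2}$, so one cannot avoid using the cancellation encoded in Moyal's identity; the task is to retain that cancellation while inserting the weight $\omega $ and reducing, through $\omega (X_1-X_2)\lesssim \omega _1(X_1)\omega _2(X_2)$, to a weighted bilinear Plancherel estimate. Once this endpoint is in place, the remaining interpolation and the matching of the quasi-Banach range $p\le 1$ with the Banach range $p\ge 1$ are routine.
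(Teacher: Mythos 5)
For $p\le 1$ your argument is correct but follows a different route from the paper. The paper expands \emph{both} $f$ and $\phi$ in a Gabor frame with window $\psi \in \Sigma _1$ and computes $V_\phi f$ directly as a double sum $\sum c(\mabfj )\overline{d(\mabfk )}\Psi (\cdo +\mabfk -\mabfj )R_{\mabfj ,\mabfk}$ with $\Psi =V_\psi \psi$ and unimodular factors $R_{\mabfj ,\mabfk}$, and then uses the $p$-subadditivity of $t\mapsto t^p$ together with the $v$-moderateness of $\omega$ and the hypothesis $\omega (\mabfj -\mabfk )\lesssim \omega _1(\mabfj )\omega _2(\mabfk )$. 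You instead use the change-of-window inequality $|V_\phi f|\lesssim |V_{\phi _0}f|*|V_{\phi _0}\phi |^{\vee}$ and the Wiener-amalgam convolution $W(L^\infty ,\ell ^p)*W(L^\infty ,\ell ^p)\subseteq W(L^\infty ,\ell ^p)$ for $p\le 1$, relying on the Galperin--Samarah characterization $\nm {V_{\phi _0}f}{W(L^\infty ,\ell ^p_{(\omega _1)})}\asymp \nm f{M^p_{(\omega _1)}}$. The weight insertion via $X_1=Y$, $X_2=Y-X$ is correct, and the two arguments rest on the same two facts ($\ell ^p$-subadditivity and concentration of $V_{\phi _0}f$ over a lattice), so this half is fine; your version is arguably cleaner to state, the paper's avoids invoking amalgam machinery beyond Proposition \ref{ConseqThmS}.

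For $1<p\le 2$ there is a genuine gap. Your diagnosis is accurate: plain Young fails for $p>1$, and at $p=2$ any estimate that only sees $|V_{\phi _0}f|$ and $|V_{\phi _0}\phi |$ is lossy (a superposition of $N$ well-separated time-frequency shifts in each argument gives $\nm {|V_{\phi _0}f|*|V_{\phi _0}\phi |^{\vee}}{L^2}\sim N^{3/2}$ against $\nm f{M^2}\nm \phi {M^2}\sim N$). But you then declare the weighted endpoint
$\nm {V_\phi f}{L^2_{(\omega )}}\lesssim \nm f{M^2_{(\omega _1)}}\nm \phi {M^2_{(\omega _2)}}$
to be ``the task'' without carrying it out, and this is exactly the nontrivial step: Moyal's identity alone gives the unweighted case, and it is not obvious how to keep the cancellation after multiplying by $\omega$. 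One concrete way to close the hole is to observe that $|V_\phi f(X)|$ coincides, up to a fixed linear change of variables, with $|(\mascF _\sigma W_{\phi ,f})(X)|$, so that $\nm {V_\phi f}{L^2_{(\omega )}}\asymp \nm {W_{\phi ,f}}{M^{2,2}_{(\omega _0)}}$ for a weight $\omega _0(X,Y)$ depending only on the second block of variables; the claim then follows from the case $p_j=q_j=p=q=2$ of Proposition \ref{t-WignerMod}, whose hypothesis \eqref{(A.9)} is precisely your condition $\omega (X_1-X_2)\lesssim \omega _1(X_1)\omega _2(X_2)$ after that change of variables. With this endpoint in hand your bilinear complex interpolation with the $p=1$ case is indeed routine. (The paper itself handles the range $p\in [1,2]$ by a modification of estimate (2.8) in \cite{Toft11}, so it too defers the details; but as written your proposal asserts rather than proves the step on which the whole range $1<p\le 2$ depends.)
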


\par

\begin{proof}
First assume that $p\le 1$. Let $\psi \in \Sigma _1(\rr d)$, $\ep >0$,
$\Lambda =\ep \zz {2d}$, $\{ c(\mabfj )\} _{\mabfj \in \Lambda}\in \ell
^p_{(\omega _1)}(\Lambda )$ and $\{ d(\mabfk )\} _{\mabfk \in \Lambda}
\in \ell ^p_{(\omega _2)}(\Lambda )$ be chosen such that
$$
f(x) = \sum _{j,\iota \in \ep \zz d}c(j,\iota )\psi (x-j)e^{i\scal x\iota}
\quad \text{and}\quad
\phi (x) = \sum _{k,\kappa \in \ep \zz d}d(k,\kappa )\psi (x-k)e^{i\scal x\kappa}
$$
(cf. Proposition \ref{ConseqThmS}).

\par

By straight-forward computations it follows that
$$
V_\phi f(X) = \sum _{\mabfj ,\mabfk \in \lambda} c(\mabfj )\overline{d(\mabfk )}
\Psi (X+\mabfk -\mabfj) R_{\mabfj ,\mabfk}(X),
$$
where $\Psi =V_\psi \psi \in \Sigma _1(\rr {2d})$ and $R_{\mabfj ,\mabfk}$
is a function of exponential type such that $|R_{\mabfj ,\mabfk}| =1$,
for every $\mabfj$ and $\mabfk$. This gives
\begin{multline*}
\nm {V_\phi f}{L^p_{(\omega )}} ^p
\le
\sum _{\mabfj ,\mabfk \in \Lambda} |c(\mabfj )|^p |d(\mabfk )|^p
\nm {\Psi (\cdo +\mabfk -\mabfj)\omega }{L^p}^p
\\[1ex]
\le
\sum _{\mabfj ,\mabfk \in \Lambda} |c(\mabfj )|^p |d(\mabfk )|^p
\nm {\Psi v }{L^p}^p\omega (\mabfj -\mabfk)^p
\\[1ex]
\lesssim
\sum _{\mabfj ,\mabfk \in \Lambda} |c(\mabfj )\omega _1(\mabfj)|^p
|d(\mabfk )\omega _2(\mabfj)|^p
\asymp \nm f{M^p_{(\omega _1)}}\nm \phi{M^p_{(\omega _2)}},
\end{multline*}
when $v\in \mascP _E(\rr {2d})$ is chosen such that $\omega$ is $v$-moderate. Here
the first inequality follows from the fact that $p\le 1$ and the last inequality follows
from the assumptions. This gives the result in the case $p\le 1$.

\par

A slight modification of the proof of (2.8) in \cite{Toft11} gives the result in the
remaining case where $p\in [1,2]$. The details are left for the reader.
\end{proof}

\par

The next result concerns extensions of certain convolution relations in \cite{Toft0,Toft1}
between Schatten-von Neumann symbols and Lebesgue spaces to the case when the
Lebesgue parameters are allowed to be smaller than $1$.

\par

\begin{prop}\label{SchattenConv}
Let $p\in (0,1]$ and $t=1/2$. Then the map $(a,b)\mapsto a*b$ is continuous from
$s^p_{t,p}(\rr {2d})\times s^p_{t,p}(\rr {2d})$ to $L^p(\rr {2d})$.
\end{prop}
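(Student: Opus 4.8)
The plan is to expand both symbols in their Wigner series and to exploit that, for $p\le 1$, the $L^p$-quasinorm is $p$-subadditive. Writing $a$ and $b$ as in the definition of $s^p_{t,p}(\rr{2d})$ in the Weyl case $t=1/2$, we have $a=\sum_{j}\lambda_j W_{f_j,g_j}$ and $b=\sum_{k}\mu_k W_{u_k,v_k}$, where $\{f_j\},\{g_j\},\{u_k\},\{v_k\}\in\ON_d$ are bounded in $M^{2p}(\rr d)$ and $\{\lambda_j\},\{\mu_k\}$ are non-negative and non-increasing in $\ell^p$. Since $p\le 1\le 2$ gives $s^p_{t,p}\subseteq s_{t,2}=L^2(\rr{2d})$, both $a$ and $b$ lie in $L^2$, so $a*b$ is a well-defined bounded continuous function and, at least formally, $a*b=\sum_{j,k}\lambda_j\mu_k\,(W_{f_j,g_j}*W_{u_k,v_k})$. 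By $p$-subadditivity this reduces matters to the estimate
\[
\nm{a*b}{L^p}^p\le\sum_{j,k}(\lambda_j\mu_k)^p\,\nm{W_{f_j,g_j}*W_{u_k,v_k}}{L^p}^p .
\]

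The key step is the pointwise identity, valid for $t=1/2$ and $L^2$-windows,
\[
|(W_{f_1,g_1}*W_{f_2,g_2})(X)|=c_d\,|V_{\check{f_1}}g_2(X)|\,|V_{\check{g_1}}f_2(X)|,\qquad X\in\rr{2d},
\]
which expresses the convolution of two Wigner distributions as a product of two short-time Fourier transforms. I would obtain it by combining the reflection relation $W_{f,g}(-Z)=W_{\check f,\check g}(Z)$, the covariance $W_{\pi(X)\phi,\pi(X)\psi}(Y)=W_{\phi,\psi}(Y-X)$ under the phase-space shift $\pi(X)$, the symmetry $\overline{W_{f,g}}=W_{g,f}$, and Moyal's identity; the latter turns $\int W_{\pi(X)\check{f_1},\pi(X)\check{g_1}}(Y)\,\overline{W_{g_2,f_2}(Y)}\,dY$ into $c_d\,\langle\pi(X)\check{f_1},g_2\rangle\,\overline{\langle\pi(X)\check{g_1},f_2\rangle}$, whose modulus is the product above. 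This is classical (see e.{\,}g. \cite{Gc2}).

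Granting the identity, each factor is estimated separately. First, by the Cauchy--Schwarz inequality,
\[
\nm{W_{f_1,g_1}*W_{f_2,g_2}}{L^p}^p=c_d^p\int_{\rr{2d}}|V_{\check{f_1}}g_2(X)|^p\,|V_{\check{g_1}}f_2(X)|^p\,dX\le c_d^p\,\nm{V_{\check{f_1}}g_2}{L^{2p}}^p\,\nm{V_{\check{g_1}}f_2}{L^{2p}}^p .
\]
Since $2p\in(0,2]$, Lemma \ref{STFTWindows} with trivial weights gives $\nm{V_{\check{f_1}}g_2}{L^{2p}}\lesssim\nm{g_2}{M^{2p}}\nm{\check{f_1}}{M^{2p}}$ and likewise for the second factor; using the reflection invariance $\nm{\check{f_1}}{M^{2p}}\asymp\nm{f_1}{M^{2p}}$ (as in the proof of Lemma \ref{basicLemmaspp}) we arrive at $\nm{W_{f_1,g_1}*W_{f_2,g_2}}{L^p}\lesssim\nm{f_1}{M^{2p}}\nm{g_1}{M^{2p}}\nm{f_2}{M^{2p}}\nm{g_2}{M^{2p}}$. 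Substituting into the first display and factoring the double sum yields
\[
\nm{a*b}{L^p}^p\lesssim\Big(\sum_j\lambda_j^p\nm{f_j}{M^{2p}}^p\nm{g_j}{M^{2p}}^p\Big)\Big(\sum_k\mu_k^p\nm{u_k}{M^{2p}}^p\nm{v_k}{M^{2p}}^p\Big)=\nm a{s^p_{t,p}}^p\,\nm b{s^p_{t,p}}^p ,
\]
which is the desired continuity (recall $q_0=\min(1,p)=p$, so the norm of $s^p_{t,p}$ is computed with $M^{2p}$).

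The step needing the most care --- and the main obstacle --- is the rigorous justification of the interchange $a*b=\sum_{j,k}\lambda_j\mu_k(W_{f_j,g_j}*W_{u_k,v_k})$ in $L^p$. I would first prove the bilinear estimate for finite partial sums, where all manipulations are classical; the very same $p$-subadditive estimate shows that these partial sums are Cauchy in $L^p$, since their tails are controlled by tails of the convergent series $\sum_j\lambda_j^p\nm{f_j}{M^{2p}}^p\nm{g_j}{M^{2p}}^p$ and its counterpart for $b$, and hence converge in $L^p$ to some $G$. As the partial sums also converge to $a*b$ in $L^\infty$ (convolution being continuous from $L^2\times L^2$ to $L^\infty$), the two limits coincide, so $a*b=G\in L^p$ and the estimate persists. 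A secondary point is to verify the pointwise product identity for windows only in $L^2\cap M^{2p}$ rather than in $\mascS$, which follows by density.
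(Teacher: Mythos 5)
Your proposal follows essentially the same route as the paper's proof: expand both symbols in their Wigner series, use $p$-subadditivity of $\|\cdot\|_{L^p}^p$, rewrite $|W_{f_1,g_1}*W_{f_2,g_2}|$ as a product of two short-time Fourier transforms, and estimate each factor in $L^{2p}$ via Cauchy--Schwarz and Lemma \ref{STFTWindows}. The only (harmless) differences are cosmetic: the paper justifies that $a*b$ is a priori well-defined through $s^p_{t,p}\subseteq s_{t,1}$ and a convolution result from \cite{Toft1} rather than through $s_{t,2}=L^2$, and it treats the interchange of sum and convolution more tersely than your partial-sum argument.
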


\par

\begin{proof}
Let $a,b\in s^p_{t,p}$. Since $s^p_{t,p}\subseteq s_{t,1}$, it follows that $a*b$ is well-defined
and belongs to $L^1$, in view of Theorem 2.1 in \cite{Toft1}.

\par

Now let
\begin{gather*}
\{ \lambda _j \} _{j=0}^\infty \in \ell ^p (\mathbf N),
\quad
\{ \mu _j \} _{j=0}^\infty \in\ell ^p (\mathbf N),\quad \{ f_{l,j} \} _{j=0}^\infty \in \ON (L^2(\rr d))
\intertext{and}
\{ g_{l,j} \} _{j=0}^\infty \in \ON (L^2(\rr d)),\quad l=1,2,
\end{gather*}
be such that $\lambda _j\ge 0$ and $\mu _j\ge 0$ for every $j\ge 0$,
$$
\sup _{j,l}\nm {f_{l,j}}{M^{2p}}<\infty
\quad \text{and}\quad
\sup _{j,l}\nm {g_{l,j}}{M^{2p}}<\infty ,
$$
and
$$
a=\sum _{j=0}^\infty \lambda _j W_{f_{1,j},f_{2,j}}
\quad \text{and}\quad
b=\sum _{j=0}^\infty \mu _j W_{g_{1,j},g_{2,j}} .
$$
Then
\begin{multline}\label{convLebSchattEst}
\nm {a*b}{L^p}^p = \int \left | \sum _{j,k} \lambda _j\mu _k
(W_{f_{1,j},f_{2,j}}*W_{g_{1,k},g_{2,k}} )(X)\right |^p \, dX
\\[1ex]
\le
\sum _{j,k} \lambda _j^p\mu _k^p
\nm {W_{f_{1,j},f_{2,j}}*W_{g_{1,k},g_{2,k}}}{L^p}^p.
\end{multline}

\par

By straight-forward computations we get
$$
|(W_{f_{1,j},f_{2,j}}*W_{g_{1,k},g_{2,k}} ) (x,\xi )|
=
C|(V_{\check f_{2,j}}g_{1,k})(x,\xi )(V_{\check f_{1,j}}g_{2,k})(x,\xi )|,
$$
for some constant $C$. Hence
Cauchy-Schwartz inequality and Lemma \ref{STFTWindows} give
\begin{multline*}
\nm {W_{f_{1,j},f_{2,j}}*W_{g_{1,k},g_{2,k}}}{L^p} =
\nm {V_{\check f_{2,j}}g_{1,k}\cdot V_{\check f_{1,j}}g_{2,k}}{L^p}
\\[1ex]
\le
\nm {V_{\check f_{2,j}}g_{1,k}}{L^{2p}}\nm {V_{\check f_{1,j}}g_{2,k}}{L^{2p}}
\lesssim \nm {f_{1,j}}{M^{2p}}\nm {f_{2,j}}{M^{2p}}\nm {g_{1,k}}{M^{2p}}\nm {g_{2,k}}{M^{2p}}.
\end{multline*}

\par

By inserting this into \eqref{convLebSchattEst} we get
\begin{multline*}
\nm {a*b}{L^p} \lesssim \left ( \sum _{j,k} \lambda _j^p\mu _k^p
\nm {f_{1,j}}{M^{2p}}^p \nm {f_{2,j}}{M^{2p}}^p
\nm {g_{1,k}}{M^{2p}}^p \nm {g_{2,k}}{M^{2p}}^p \right )^{1/p}
\\[1ex]
= \nm a{s_p^p}\nm b{s_p^p},
\end{multline*}
and the result follows.
\end{proof}

\par

\begin{proof}[Proof of Theorem \ref{CompSuppSchatten}]
The equality and the last embedding in \eqref{CompSuppSchattenEq}
follow from \cite[Proposition 4.3]{Toft12} and Theorem \ref{thmOpSchatten}.
The first embedding in \eqref{CompSuppSchattenEq} follows from Corollary
2.12 in \cite{Toft1} in the case $p\ge 1$.
It remains to prove the first embedding in \eqref{CompSuppSchattenEq}
in the case $p<1$.

\par

Therefore, assume that $p<1$, let $a\in \mascE '(\rr {2d})$ and choose
$\fy \in C_0^\infty (\rr {2d})$ such that $\fy =1$ on $\supp a$. Then $\mascF _\sigma
\fy \in s_{t,p}^p(\rr {2d})$ by Proposition \ref{SchattenExp}. Hence Lemma
\ref{basicLemmaspp} and Proposition \ref{SchattenConv} give
\begin{multline*}
\nm a{\mascF L^p} = \nm {\fy a}{\mascF L^p} \asymp \nm {(\mascF _\sigma \fy )
* (\mascF _\sigma a)}{L^p}
\\[1ex]
\lesssim \nm {\mascF _\sigma \fy }{s_{t,p}^p} \nm {\mascF _\sigma a}{s_{t,p}^p}
\lesssim \nm a{s_{t,p}^p},
\end{multline*}
which gives the result.
\end{proof}

\medspace


\begin{thebibliography}{150}

\bibitem{BaCoIs} W. Bauer, L. A. Coburn, J. Isralowitz
\emph{Heat flow, BMO, and the compactness of Toeplitz operators},
J. Funct. Anal. \textbf{259} (2010), 57--78.

\bibitem{BaIs} W. Bauer, J. Isralowitz \emph{Compactness characterization of
operators in the Toeplitz algebra of the Fock space $F_\alpha ^p$},
J. Funct. Anal. \textbf{263} (2012), 1323--1355.

\bibitem{BeLo} {J. Bergh, J. L{\"o}fstr{\"o}m}
\emph{Interpolation Spaces, An Introduction}, Springer-Verlag, {Berlin
Heidelberg NewYork}, 1976.

\bibitem{BS} {Birman, Solomyak} \emph{Estimates for the singular 
numbers of integral operators (Russian)}, Usbehi
Mat. Nauk. \textbf{32}, (1977), 17--84.


\bibitem{BoDoOl1} P. Boggiatto, G. De Donno, A. Oliaro 
\emph{Time-frequency representations of Wigner type and
pseudo-differential operators},
Trans. Amer. Math. Soc. \textbf{362} (2010), 4955--4981.

\bibitem{BoTo} {P. Boggiatto, J. Toft} \emph
{Embeddings and compactness for generalized Sobolev-Shubin spaces and
modulation spaces}, Appl. Anal. (3) \textbf{84} (2005), 269--282.

%

\bibitem{BoCh} J. M. Bony, J. Y. Chemin
\emph{Espaces functionnels associ{\'e}s au calcul de Weyl-H{\"o}rmander},
Bull. Soc. Math. France \textbf{122} (1994), 77--118.




\bibitem{BuTo} E. Buzano, J. Toft
\emph{Schatten-von Neumann properties in the Weyl calculus},
J. Funct. Anal. \textbf{259} (2010), 3080--3114.


\bibitem{CaTo} M. Cappiello, J. Toft \emph{Pseudo-differential
operators in a Gelfand-Shilov setting},
Math. Nachr. (to appear), also available at arXiv:1505.04096v2.

\bibitem{CaWa} E. Carypis, P. Wahlberg \emph{Propagation of exponential
phase space singularities for Schršdinger equations with quadratic Hamiltonians},
J. Fourier Anal. Appl. (to appear), also available at arXiv:1510.00325.

\bibitem{Co} H. O. Cordes \emph{The Technique of Pseudodifferential
Operators}, Cambridge Univ. Press, 1995.



\bibitem{CPRT10} E. Cordero, S. Pilipovi\'c, L. Rodino, N. Teofanov
\emph{Quasianalytic Gelfand-Shilov spaces with applications
to localization operators}, Rocky Mt. J. Math. \textbf{40} (2010), 
1123--1147.

\bibitem{DeRu1} J. Delgado, M. Ruzhansky
\emph{Schatten classes on compact manifolds: kernel conditions},
J. Funct. Anal. \textbf{267} (2014), 772--798.

\bibitem{DeRu2} J. Delgado, M. Ruzhansky
\emph{Kernel and symbol criteria for Schatten classes and
$r$-nuclearity on compact manifolds}
C. R. Math. Acad. Sci. Paris \textbf{352} (2014), 779--784.

\bibitem{DeRu3} J. Delgado, M. Ruzhansky
\emph{$L^p$-nuclearity, traces, and Grothendieck-Lidskii formula on
compact Lie groups}, J. Math. Pures Appl. \textbf{102} (2014) 153--172.


\bibitem{Fei1}  H. G. Feichtinger \emph{Modulation spaces on locally
compact abelian groups. Technical report}, {University of
Vienna}, Vienna, 1983; also in: M. Krishna, R. Radha,
S. Thangavelu (Eds) Wavelets and their applications, Allied
Publishers Private Limited, NewDehli Mumbai Kolkata Chennai Hagpur
Ahmedabad Bangalore Hyderbad Lucknow, 2003, pp. 99--140.

%

\bibitem{FG1}  {H. G. Feichtinger, K. H. Gr{\"o}chenig}
\emph{Banach spaces related to integrable group representations and
their atomic decompositions, I}, J. Funct. Anal., \textbf{86}
(1989), 307--340.



\bibitem{FiRu} V. Fischer, M. Ruzhansky
\emph{A pseudo-differential calculus on the Heisenberg group},
C. R. Math. Acad. Sci. Paris \textbf{352} (2014), 197--204.


\bibitem{GaSa} Y. V. Galperin, S. Samarah \emph{Time-frequency analysis
on modulation spaces $M^{p,q}_m$, $0<p,q\le \infty$}, Appl. Comput.
Harmon. Anal. \textbf{16} (2004), 1--18.




\bibitem{Gc1} {K. H. Gr{\"o}chenig} \emph {Describing
functions: atomic decompositions versus frames},
{Monatsh. Math.},\textbf{112} (1991), 1--42.

\bibitem{Gc2} {K. H. Gr{\"o}chenig} \newblock \emph{Foundations of
Time-Frequency Analysis},
\newblock Birkh{\"a}user, Boston, 2001.

\bibitem{Gc3} {K. H. Gr{\"o}chenig} \emph{Composition and spectral invariance
of pseudodifferential operators on modulation spaces}, J. Anal.
Math. \textbf{98} (2006), 65--82.

\bibitem{Gc4} {K. H. Gr{\"o}chenig} \emph{Time-frequency analysis of Sj{\"o}strand's
class}, Rev. Mat. Iberoam. \textbf{22} (2006), 703--724.

\bibitem{Gc2.5}
K. Gr{\"o}chenig \emph{Weight functions in time-frequency analysis
\rm {in: L. Rodino, M. W. Wong (Eds) Pseudodifferential
Operators: Partial Differential Equations and Time-Frequency Analysis}},
Fields Institute Comm., \textbf{52} 2007, pp. 343--366.

\bibitem{GH1}  {K. H. Gr{\"o}chenig and C. Heil} \emph
{Modulation spaces and pseudo-differential operators}, Integral
Equations Operator Theory (4) \textbf{34} (1999), 439--457.

\bibitem{GH2} {K. H. Gr{\"o}chenig and C. Heil} \emph {Modulation spaces
as symbol classes for pseudodifferential operators {\rm {in: M. Krishna, R. Radha,
S. Thangavelu (Eds) Wavelets and their applications}}}, Allied
Publishers Private Limited, NewDehli Mumbai Kolkata Chennai Hagpur
Ahmedabad Bangalore Hyderbad Lucknow, 2003, pp. 151--170.


\bibitem{GrSt} K. Gr{\"o}chenig, T. Strohmer
\emph{Pseudodifferential operators on locally compact abelian groups and
Sj{\"o}strand's symbol class}, J. Reine Angew. Math. \textbf{613} (2007),
121--146.

\bibitem{GrochToft1} K. H. Gr{\"o}chenig, J. Toft, \emph{Isomorphism
properties of Toeplitz operators and pseudo-differential operators
between modulation spaces}, J. Anal. Math. \textbf{114} (2011),
255--283.


\bibitem{HeWo} Z. He and M. W. Wong
\emph{Localization operators associated to square integrable group
representations}, Panamer. Math. J. \textbf{6} (1996), 93--104.


\bibitem{Ho0} L. H{\"o}rmander \emph{On the asymptotic distributions
of the eigenvalues of pseudodifferential operators in $\mathbb R^n$},
Ark. Mat. \textbf{17} (1979), 297--313.

\bibitem{Ho1}  L. H{\"o}rmander \emph{The Analysis of Linear
Partial Differential Operators}, vol {I--III},
Springer-Verlag, Berlin Heidelberg NewYork Tokyo, 1983, 1985.

\bibitem{Is} J. Isralowitz
\emph{Schatten $p$ class Hankel operators on the Segal-Bargmann
space $H^2(\mathbb C^n,d\mu )$ for $0<p<1$},
J. Operator Theory \textbf{66} (2011), 145--160. 



\bibitem{Le} N. Lerner \emph{Metrics on the Phase Space and
Non-Selfadjoint Pseudo-Differential Operators},  Birkh{\"a}user
Verlag, Basel, 2010.

\bibitem{LozPerTask} Z. Lozanov-Crvenkovi{\'c}, D. Peri{\v{s}}i{\'c},
M. Taskovi{\'c} \emph{Gelfand-Shilov spaces structural and kernel
theorems}, (preprint), arXiv:0706.2268v2.

\bibitem{MoPf} S. Molahajloo, G. E. Pfander
\emph{Boundedness of pseudo-differential operators on $L^p$,
Sobolev and modulation spaces}, Math. Model. Nat. Phenom. \textbf{8}
(2013), 175--192.

\bibitem{Pa} C. Parenti \emph{Operatori pseudodifferenziali in
$\rr n$ e applicazioni}, {Ann. Mat. Pura Appl.}, \textbf{93} (1972), 359--389.

\bibitem{Pe} V. Peller \emph{Hankel operators and their applications},
Springer Monographs in Mathematics, Springer-Verlag, New York, 2003.

\bibitem{Pf} G. E. Pfander
\emph{Sampling of operators},
J. Fourier Anal. Appl. \textbf{19} (2013), 612--650.

\bibitem{Pil} S. Pilipovic \emph{Generalization of Zemanian spaces
of generalized functions which
have orthonormal series expansions},
SIAM J. Math. Anal. \textbf{17} (1986), 477--484.

%
%
%
%



\bibitem{Sh} {M. A. Shubin}, \emph{{Pseudodifferential Operators and
Spectral Theory}}, Sprin\-ger-Verlag, Berlin, 1987.


\bibitem{Si} {B. Simon} \emph{Trace ideals and their
applications}, I, London Math. Soc. Lecture Note Series,
Cambridge University Press, Cambridge London New York Melbourne,
1979.

\bibitem{Sj1}  {J. Sj{\"o}strand} \emph{An algebra of
pseudodifferential operators}, {Math. Res. L.} \textbf 1 (1994),
185--192.



\bibitem{Toft0}  {J. Toft} \emph{Continuity and Positivity
Problems in Pseudo-Differential Calculus, Thesis}, {Department of
Mathematics, University of Lund}, Lund, 1996.

\bibitem{Toft1} {J. Toft} \emph{Continuity properties for
non-commutative convolution algebras with applications in
pseudo-differential calculus}, {Bull. Sci. Math. (2)} \textbf {126}
(2002), 115--142.

\bibitem{Toft2} {J. Toft} \emph{Continuity properties for
modulation spaces with applications to pseudo-differential calculus,
I}, {J. Funct. Anal. (2)}, \textbf{207} (2004),
399--429.

\bibitem{Toft3} {J. Toft} \emph{Continuity
properties for modulation spaces with applications to
pseudo-differential calculus, II}, {Ann. Global Anal. Geom.},
\textbf{26} (2004), 73--106.

\bibitem{Toft4} J. Toft
\emph{Schatten-von Neumann properties in the Weyl calculus,
      and calculus of metrics on symplectic vector spaces},
      Ann. Glob. Anal. and Geom. \textbf{30} (2006), 169--209.

\bibitem{Toft5} {J. Toft} \emph{Continuity and Schatten
properties for pseudo-differential operators on modulation spaces {\rm
{in: J. Toft, M. W. Wong, H. Zhu (eds)}} Modern Trends in
Pseudo-Differential Operators,} Operator Theory: Advances and
Applications, Birkh{\"a}user Verlag, Basel, 2007, 173--206.


\bibitem{Toft8} J. Toft \emph{The Bargmann transform on modulation and
Gelfand-Shilov spaces, with applications to Toeplitz and
pseudo-differential operators}, J. Pseudo-Differ. Oper. Appl. 
\textbf{3} (2012), 145--227.

\bibitem{Toft11} J. Toft \emph{Multiplication properties in Gelfand-Shilov 
pseudo-differential calculus {\rm {in: S. Molahajlo, S. Pilipovi{\'c}, J. Toft,
M. W. Wong (eds)}} Pseudo-Differential Operators, Generalized
Functions and  Asymptotics,} Operator Theory: Advances and
Applications Vol 231, Birkh{\"a}user, Basel Heidelberg NewYork
Dordrecht London, 2013, pp. 117--172.

\bibitem{Toft12} J. Toft \emph{Gabor analysis for a broad class of quasi-Banach
modulation spaces, {\rm {in: S. Pilipovi{\'c}, J. Toft
(eds)}} Pseudo-Differential Operators and Generalized
Functions,} Operator Theory: Advances and
Applications Vol 245, Birkh{\"a}user, Basel Heidelberg NewYork
Dordrecht London, 2015, pp. 249--278.



\bibitem{Wa} P. Wahlberg
\emph{Vector-valued modulation spaces and localization operators
with operator-valued symbols}, Integr. equ. oper. theory \textbf{59} (2007),
99--128.

\bibitem{WaSc} P. Wahlberg, P. J. Schreier \emph{Gabor discretization
of the Weyl product for modulation spaces and filtering of nonstationary
stochastic processes}, Appl. Comput. Harmon. Anal. \textbf{26} (2009),
97--120.



\end{thebibliography}
\end{document}